\newif\ifJordanCurveTheorem
\newif\ifShowQEResults
\newcommand{\C}{\mathbb{C}}
\newcommand{\Z}{\mathbb{Z}}
\newcommand{\R}{\mathbb{R}}
\newcommand{\Q}{\mathbb{Q}}
\newcommand{\Pp}{\mathbb{P}}
\newcommand{\Aa}{\mathbb{A}}
\newcommand{\Hh}{\mathbb{H}}
\newcommand{\ACTS}{\curvearrowright}
\DeclareMathOperator{\trace}{tr}
\newcommand{\val}{\operatorname{val}}
\newcommand{\Trop}{\mathbf{Trop}}
\newcommand{\trop}{\mathrm{trop}}
\newcommand{\cell}{\mathcal{C}}
\newcommand{\ConeParam}{\phi}
\newcommand{\PreConeParam}{\Phi}
\newcommand{\ConeParamPi}{\psi}
\newcommand{\Fix}{\mathsf{Fix}}
\newcommand{\TODO}[1]{({\color{red}TODO} {#1})}
\newcommand{\TODO}[1]{}
\numberwithin{equation}{section}
\newtheorem{lemma}{Lemma}[section]
\newtheorem{corollary}[lemma]{Corollary}
\newtheorem{theorem}[lemma]{Theorem}
\newtheorem{proposition}[lemma]{Proposition}
\newtheorem{theoremA}{Theorem}
\theoremstyle{definition}
\newtheorem{definition}[lemma]{Definition}
\newtheorem{question}[lemma]{Question}
\newtheorem{example}[lemma]{Example}
\newtheorem*{remark}{Remark}
\title{Tropical Dynamics of Markov Surfaces}
\author{Seung uk Jang}
\date{May 30, 2023. Last revised \today}
\begin{document}
\allowdisplaybreaks

\begin{abstract}
    We discuss the algebraic dynamics on Markov cubics generated by Vieta involutions, in the tropicalized setting. It turns out that there is an invariant subset of the tropicalized Markov cubic where the action by Vieta involutions can be modeled by that of $(\infty,\infty,\infty)$-triangle reflection group on the hyperbolic plane.

    This understanding of the tropicalized algebraic dynamics produces some results on Markov cubics over non-archimedean fields, including the existence of the Fatou domain and finitude of orbits with rational points having prime power denominators.
\end{abstract}

\maketitle

\section{Introduction}

By \emph{Markov surfaces}, we mean a family of surfaces
\[S_{ABCD}\colon X_1^2+X_2^2+X_3^2+X_1X_2X_3=AX_1+BX_2+CX_3+D,\]
where $A,B,C$ and $D$ are parameters that sits on a field. In this paper, we study the algebraic dynamics of this surface but in a tropical point of view.

The motivation of this surface may be found from character varieties of 4-punctured sphere or 1-punctures torus, but not limited to that. Various motivations to introduce this surfaces may be found in \cite{Goldman2006,Can09,RR21}, etc.

By \cite[Thm. 2]{ElHuti} (see also \cite[Thm. 3.1]{CL09}), the group $\mathsf{Aut}(S_{ABCD})$ of algebraic automorphisms of surfaces $S_{ABCD}$ has a finite-index subgroup $\Gamma_{ABCD}$, in which we call it the \emph{Vieta group}, generated by those maps called \emph{Vieta involutions},
\begin{align}
    s_1(X_1,X_2,X_3) &= (-X_2X_3-X_1,X_2,X_3) \nonumber \\
    &= ((X_2^2+X_3^2-BX_2-CX_3-D)/X_1,X_2,X_3)\quad(\text{if }X_1\neq 0), \label{eqn:vanilla-vieta-1}\\
    s_2(X_1,X_2,X_3) &= (X_1,-X_1X_3-X_2,X_3) \nonumber \\
    &= (X_1,(X_1^2+X_3^2-AX_1-CX_3-D)/X_2,X_3)\quad(\text{if }X_2\neq 0), \label{eqn:vanilla-vieta-2} \\
    s_3(X_1,X_2,X_3) &= (X_1,X_2,-X_1X_2-X_3) \nonumber \\
    &= (X_1,X_2,(X_1^2+X_2^2-AX_1-BX_2-D)/X_3)\quad(\text{if }X_3\neq 0). \label{eqn:vanilla-vieta-3}
\end{align}
Hence to study the algebraic dynamics of the surface $S_{ABCD}$, it suffices to study the action of the Vieta group. 

Following \cite{SV20,filip2019tropical}, we study the tropical algebraic dynamics of the surface under a specific embedding $S_{ABCD}\hookrightarrow\Aa^3$. Although there might be several versions of ``tropical viewpoints,'' in this paper we restrict ourselves to tropicalizations of varieties over non-archimedean fields.

The techniques here are directly based on the combinatorial structure of the tropicalizations and hence they are less robust by the nature. Nonetheless, they can be perhaps generalized to a family of hypersurfaces like Markov--Hurwitz varieties \cite{MR4024562}:
\[X_1^2+X_2^2+\cdots+X_n^2=AX_1\cdots X_n+K,\]
where $A,K$ are parameters.

\subsection{Main Results} We list our main results as follows. 

First is the principal geometric classification of the tropical action. Let $(K,\val)$ be a nontrivially vaued algebraically closed field. Consider Markov surfaces
\[S_{ABCD}\colon X_1^2+X_2^2+X_3^2+X_1X_2X_3=AX_1+BX_2+CX_3+D\]
with parameters $A,B,C,D\in K$, whose values are denoted by $a,b,c,d$ respectively. Let $\Gamma_{ABCD}=\langle s_1,s_2,s_3\rangle$ be the group generated by Vieta involutions. 
In the tropicalized surface of $S_{ABCD}$, we will define the \emph{skeleton} $Sk(a,b,c,d)\subset\Trop(S_{ABCD})$ (see Definition~\ref{def:skeleton}), a natural invariant subset with the \emph{tropical action} $\Gamma_{ABCD}\ACTS^{\trop} Sk(a,b,c,d)$, by:
\begin{align}
    \trop(s_1)(x_1,x_2,x_3) &= (\min\{2x_2,2x_3,b+x_2,c+x_3,d\}-x_1,x_2,x_3), \label{eqn:tropical-vieta-1} \\
    \trop(s_2)(x_1,x_2,x_3) &= (x_1,\min\{2x_1,2x_3,a+x_1,c+x_3,d\}-x_2,x_3), \label{eqn:tropical-vieta-2} \\
    \trop(s_3)(x_1,x_2,x_3) &= (x_1,x_2,\min\{2x_1,2x_2,a+x_1,b+x_2,d\}-x_3). \label{eqn:tropical-vieta-3}
\end{align}

\begin{theoremA}
    \label{thm:A}
    Let $Sk(a,b,c,d)\subset\Trop(S_{ABCD})$ and the tropical action $\Gamma_{ABCD}\ACTS^{\trop}Sk(a,b,c,d)$ be as above. Then the action is conjugate to the following models.
    \begin{enumerate}[(a)]
        \item \emph{(Theorem~\ref{lem:one-meromorphic-model})} If $\min\{a,b,c,d\}<0$, then there is an invariant dense open subset $U\subset Sk(a,b,c,d)$ such that the action $\Gamma_{ABCD}\ACTS U$ is topologically conjugate to the $(\infty,\infty,\infty)$-triangle group action on the hyperbolic plane.
        \item \emph{(Theorem~\ref{thm:tropical-dynamics-holomorphic-parameters})} If $\min\{a,b,c,d\}\geq 0$, then there is a group isomorphism $j\colon\Gamma_{ABCD}\to\ker(\mathrm{PGL}_2(\Z)\to\mathrm{PGL}_2(\Z/2\Z))$ such that $\Gamma_{ABCD}\ACTS Sk(a,b,c,d)$ is conjugate to the linear action $j(\Gamma_{ABCD})\ACTS\R^2/\{\pm 1\}$ on the plane modulo antipodes.
    \end{enumerate}
\end{theoremA}

We coin \emph{meromorphic parameters} for the case $\min\{a,b,c,d\}<0$ and \emph{holomorphic parameters} for the case $\min\{a,b,c,d\}\geq 0$. The geometric distinction between them lies on the degeneration of the region $\subset Sk(a,b,c,d)$ where the ping-pong lemma can be established (see Proposition~\ref{lem:cayley-domain-description}(v)).

In meromorphic parameters, we have an explicit triangular-shaped region in the skeleton whose images by $\Gamma$ copies so-called Farey tessellation of the hyperboic plane. In holomorphic parameters, this region degenerates to a point; in that case, the tropical action can be described with the \emph{Cayley cubic} $S_{0004}$, which is isomorphic to the algebraic torus $\mathbb{G}_m^2/\pm I_2$ quotiented by the monomial map $(-I_2).(x,y)=(x^{-1},y^{-1})$, and the Vieta action are given by monomial maps too. As this variety tropicalizes to the plane quotient $\R^2/\{\pm 1\}$, the model follows.

The other results are applications of Theorem \ref{thm:A} and they will be stated after sketching their backgrounds.

\subsubsection{Fatou Domains} Recall that Rebelo and Roeder defined the \emph{Fatou domain} of the action $\Gamma_{ABCD}\ACTS S_{ABCD}(\C)$ as follows (see \S{1.5} of \cite{RR21}):
\[\mathcal{F}_{ABCD} := \left\{p\in S_{ABCD}(\C) : \begin{array}{l} \Gamma_{ABCD}\text{ forms a normal family} \\ \text{in an open neighborhood of }p \end{array}\right\}.\]
Here, we say that $\Gamma_{ABCD}$ forms a \emph{normal family} in an open set $U$ if for any sequence $\{\gamma_n\}_{n=0}^\infty$ in $\Gamma_{ABCD}$, there exists a subsequence $\{\gamma_{n_k}\}$ such that the following holds. The restrictions $\gamma_{n_k}|_U$ are either (a) locally uniformly convergent to a continuous map on $U$, or (b) locally uniformly converging to infinity, i.e., for any compact subsets $\overline{V}\Subset U$ and $K\Subset S_{ABCD}(\C)$, we have $\gamma_{n_k}(\overline{V})\cap K=\varnothing$ for all but finitely many $k$'s.

Rebelo and Roeder have shown, in their theorems D and E of \cite{RR21}, that there is a parameter $(A,B,C,D)=(0,0,0,4)$ that leaves $\mathcal{F}_{ABCD}=\varnothing$, yet there is an open set of parameters that lets $\mathcal{F}_{ABCD}\neq\varnothing$. We say a parameter $(A,B,C,D)$ \emph{admits the Fatou domain} if $\mathcal{F}_{ABCD}\neq\varnothing$.

Over non-archimedean fields, we can analogously extend the definitions above and define the Fatou domain of the Markov surface. In fact, it is easier to discover parameters that admit the Fatou domain: namely, all meromorphic parameters admit it.
\begin{theoremA}[Fatou Sets over non-archimedean fields; see Theorem \ref{thm:p-adic-fatou-domain}]
    \label{thm:B}
   Suppose $S_{ABCD}$ is defined over a non-archimedean field $(K,\val)$. If $\min\{\val(A),\val(B),\val(C),\val(D)\}<0$, then $(A,B,C,D)$ admits the Fatou domain.
\end{theoremA}

We remark that the parameter $(0,0,0,4)$ still does not admit Fatou domain in this case; and indeed, the values of 0 and 4 are nonnegative on any non-archimedean field.

\subsubsection{Diophantine Questions} One of the pioneering interest of Markov surface \cite{Markoff1879,Markoff1880} comes from the diophantine equation
\[x^2+y^2+z^2=3xyz,\]
whose solution set over $\C$ (or any field of characteristic $\neq 3$) is an affine transform of the surface $S_{0000}(\C)$. It was proven in \cite[\S{10}]{Markoff1880} that all positive integer solution of the equation is found on the $\Gamma_{0000}$-orbit of $(1,1,1)$, by the method of infinite descent.

Nowadays such an infinite descent may be explained by the fundamental domain of the mapping class group action on the Teichm\"uller space of 1-cusped torus (see \cite[\S{3.1}]{Goldman03}: the action is isomorphic to the $(\infty,\infty,\infty)$-triangle group action on the hypoerbolic plane). But this is just a viewpoint over real points; works like \cite{GMS21,GS22} pays attention to $S$-integer points, where $S$ is a finite set of primes.

Our tropical point of view allows us to state a result about $\{p\}$-integer points.%, i.e., $\Z[\frac1p]$-points.
\begin{theoremA}[{$\Z[\frac1p]$-points in the Compact Component; see Theorem \ref{thm:rational-points-compact-component}}]
    \label{thm:C}
    Suppose $p$ is an odd prime and $D$ is a parameter in $\Z[\frac1p]$ such that $0<D<1$. There are finitely many points $P_1,\ldots,P_N\in S_{000D}(\Z[\frac1p])$ such that, every other $\Z[\frac1p]$-point in the compact part of $S_{000D}(\R)$ is either
    \begin{enumerate}[(i)]
        \item on the $\Gamma_{000D}$-orbit of some $P_i$, or
        \item on the $\Gamma_{000D}$-orbit of some point in the set
        \[\left\{(X_1,X_2,X_3)\in S_{000D}(\Z[\tfrac1p]) : \begin{array}{l} X_1X_2X_3=0, \\ (\exists i)(2\val_p(X_i)<\val_p(D)) \end{array}\right\}.\]
    \end{enumerate}
    Moreover, if $p\equiv 1\pmod{4}$ and there are $x_0,y_0\in\Z[\frac1p]$ with $x_0^2+y_0^2=D$, then the set in (ii) is infinite. Otherwise, the set is empty.
\end{theoremA}

The core analysis of the above is to find a relation between the absolute value and the $p$-adic norm, for numbers in $\Z[\frac1p]$. By such, we can find estimates of the $p$-adic absolute values by archimedean absolute values.

One of the key point of the above theorem is that, the points with zero on one of its coordinates, described in case (ii), are mutually inequivalent and hence yields infinitely many $\Gamma$-orbits. We can avoid such a case when $p\equiv 3\pmod{4}$ and hence the compart part of $S_{000D}(\Z[\frac1p])$ has only finitely many orbits.

\subsection{Outline of the Proof, by an Example} Instead of outlining the proof of theorems by the most general language, we work on an illustrative example and list the key ingredients and notions for the proof.

The example comes from the \emph{punctured torus parameter}, meaning that our parameters are $(A,B,C,D)=(0,0,0,D)$. The case when $\val(D)<0$ is the most illustrative example of the results in this paper.

The tropicalization $\Trop(S_{000D})$ of $S_{000D}$ is the set of points $(x_1,x_2,x_3)\in\R^3$ in which the piecewise-linear function (here, $d:=\val(D)$)
\[\min\{2x_1,2x_2,2x_3,x_1+x_2+x_3,d\}\]
attains its minimum on at least two terms. If we plot such points on the space, we get a picture as in Figures \ref{fig:first-tropicalization} and \ref{fig:second-tropicalization}, depending on $d<0$ or $d\geq 0$ respectively. If $d<0$, we observe that the triangle (to be called the \emph{ping-pong table}) in the figure has vertices $(\frac12d,\frac12d,0)$, $(\frac12d,0,\frac12d)$, and $(0,\frac12d,\frac12d)$. If $d\geq 0$, this triangle degenerates to the point $(0,0,0)$.

\begin{figure}
    \centering
    \begin{tikzpicture}[scale=.7,
        x={({(13/16)^(1/2)},0)}, % 13/16 + (.5*cos(30))^2 = 1
        y={(0,{(15/16)^(1/2)})}, % 15/16 + (.5*sin(30))^2 = 1
        z={({-.5*cos(30)},{-.5*sin(30)})}]
        %% base perspective, for the record
            %\coordinate (I) at ({-.5*cos(30)},{-.5*sin(30)});
            %\coordinate (J) at (1,0);
            %\coordinate (K) at (0,1);
        
            % layer 5: coordinate axes
            \draw[dashed,-latex] (-5,0,0) -- (5,0,0) node[right] {$x_1$};
            \draw[dashed,-latex] (0,-5,0) -- (0,5,0) node[above] {$x_2$};
            \draw[dashed,-latex] (0,0,-5) -- (0,0,5) node[left] {$x_3$};
            % layer 4: quadratic fins
            \foreach \i in {.5}{
            \fill[gray,opacity=\i] (-5,-5,5) -- (-1,-1,5) -- (-1,-1,0) -- (-5,-5,0) -- cycle;
            \fill[gray,opacity=\i] (5,-5,-5) -- (5,-1,-1) -- (0,-1,-1) -- (0,-5,-5) -- cycle;
            \fill[gray,opacity=\i] (-5,5,-5) -- (-1,5,-1) -- (-1,0,-1) -- (-5,0,-5) -- cycle;
            }
            % layer 2: cell C(D)
            \foreach \i in {.5}{
            \fill[gray,opacity=\i] (-1,-1,0) -- (-1,0,-1) -- (0,-1,-1) -- cycle;
            % layer 3: quadratic cells
            \fill[gray,opacity=\i] (-1,-1,0) -- (-5,-5,0) -- (-5,0,-5) -- (-1,0,-1) -- cycle;
            \fill[gray,opacity=\i] (-1,-1,0) -- (-5,-5,0) -- (0,-5,-5) -- (0,-1,-1) -- cycle;
            \fill[gray,opacity={.5*\i}] (0,-1,-1) -- (0,-5,-5) -- (-5,0,-5) -- (-1,0,-1) -- cycle;
            }
            % layer 1: +++ direction fins
            \foreach \i in {.5}{
            \fill[gray,opacity=\i] (-1,-1,0) -- (-1,0,-1) -- (-1,5,-1) -- (-1,5,5) -- (-1,-1,5) -- cycle;
            \fill[gray,opacity=\i] (-1,0,-1) -- (0,-1,-1) -- (5,-1,-1) -- (5,5,-1) -- (-1,5,-1) -- cycle;
            \fill[gray,opacity=\i] (-1,-1,0) -- (0,-1,-1) -- (5,-1,-1) -- (5,-1,5) -- (-1,-1,5) -- cycle;
            }
            % layer 0: skeletal lines
            \draw (-1,-1,0) -- (-1,0,-1) -- (0,-1,-1) -- cycle;
            \draw (-5,-5,0) -- (-1,-1,0) -- (-1,-1,5);
            \draw (-5,0,-5) -- (-1,0,-1) -- (-1,5,-1);
            \draw (0,-5,-5) -- (0,-1,-1) -- (5,-1,-1);
        \end{tikzpicture}
    \caption{Tropicalization of $S_{000D}$, $\val(D)<0$} \label{fig:first-tropicalization}

    \begin{tikzpicture}[scale=.7,
        x={({(13/16)^(1/2)},0)}, % 13/16 + (.5*cos(30))^2 = 1
        y={(0,{(15/16)^(1/2)})}, % 15/16 + (.5*sin(30))^2 = 1
        z={({-.5*cos(30)},{-.5*sin(30)})}]
        %% base perspective, for the record
            %\coordinate (I) at ({-.5*cos(30)},{-.5*sin(30)});
            %\coordinate (J) at (1,0);
            %\coordinate (K) at (0,1);
        
            % layer 5: coordinate axes
            \draw[dashed,-latex] (-5,0,0) -- (5,0,0) node[right] {$x_1$};
            \draw[dashed,-latex] (0,-5,0) -- (0,5,0) node[above] {$x_2$};
            \draw[dashed,-latex] (0,0,-5) -- (0,0,5) node[left] {$x_3$};
            % layer 4: quadratic fins
            \foreach \i in {.5}{
            \fill[gray,opacity=\i] (-5,-5,5) -- (1,1,5) -- (1,1,1) -- (0,0,0) -- (-5,-5,0) -- cycle;
            \fill[gray,opacity=\i] (5,-5,-5) -- (5,1,1) -- (1,1,1) -- (0,0,0) -- (0,-5,-5) -- cycle;
            \fill[gray,opacity=\i] (-5,5,-5) -- (1,5,1) -- (1,1,1) -- (0,0,0) -- (-5,0,-5) -- cycle;
            }
            \foreach \i in {.5}{
            % layer 3: quadratic cells
            \fill[gray,opacity=\i] (0,0,0) -- (-5,-5,0) -- (-5,0,-5) -- cycle;
            \fill[gray,opacity=\i] (0,0,0) -- (-5,-5,0) -- (0,-5,-5) -- cycle;
            \fill[gray,opacity={.5*\i}] (0,0,0) -- (0,-5,-5) -- (-5,0,-5) -- cycle;
            }
            % layer 1: +++ direction fins
            \foreach \i in {.5}{
            \fill[gray,opacity=\i] (1,1,1) -- (1,5,1) -- (1,5,5) -- (1,1,5) -- cycle;
            \fill[gray,opacity=\i] (1,1,1) -- (5,1,1) -- (5,5,1) -- (1,5,1) -- cycle;
            \fill[gray,opacity=\i] (1,1,1) -- (5,1,1) -- (5,1,5) -- (1,1,5) -- cycle;
            }
            % layer 0: skeletal lines
            \draw (-5,-5,0) -- (0,0,0) -- (1,1,1) -- (1,1,5);
            \draw (-5,0,-5) -- (0,0,0) -- (1,1,1) -- (1,5,1);
            \draw (0,-5,-5) -- (0,0,0) -- (1,1,1) -- (5,1,1);
        \end{tikzpicture}
    \caption{Tropicalization of $S_{000D}$, $\val(D)\geq0$} \label{fig:second-tropicalization}
\end{figure}

Tropicalizations of maps $s_i$, $i=1,2,3$, in this case are defined as follows:
\begin{align*}
    \trop(s_1)(x_1,x_2,x_3) &= (\min\{2x_2,2x_3,d\}-x_1,x_2,x_3), \\
    \trop(s_2)(x_1,x_2,x_3) &= (x_1,\min\{2x_1,2x_3,d\}-x_2,x_3), \\
    \trop(s_3)(x_1,x_2,x_3) &= (x_1,x_2,\min\{2x_1,2x_2,d\}-x_3).
\end{align*}
It turns out that the subset $Sk=Sk(\infty,\infty,\infty,d)$ of $\Trop(S_{000D})$ characterized by that ``the degree 3 term $x_1+x_2+x_3$ is majorized,'' is invariant under all three tropicalized maps. See Figure \ref{fig:skeleton-and-exception-lines} for a sketch of that subset $Sk$ with $d<0$; the one with $d\geq 0$ will appear later in Figure~\ref{fig:tetrahedral-cone}. Such a subset will be called the \emph{skeleton} of $\Trop(S_{000D})$. If $d<0$, the skeleton consists of the triangle with vertices $(\frac12d,\frac12d,0)$, $(\frac12d,0,\frac12d)$, and $(0,\frac12d,\frac12d)$, mentioned above, and three infinite components that lie on planes $x_1=x_2+x_3$, $x_2=x_1+x_3$, and $x_3=x_1+x_2$, respectively. Such subsets will be called \emph{adjacency loci} later. Furthermore, the whole skeleton lies on the $(-,-,-)$-octant in the space.

The shape of the skeleton may be described as a trimmed tetrahedral cone (if $d<0$; if $d\geq 0$ then we do not trim), which also looks like a ``piecewise-linear hyperboloid.'' The plane $\Pi=\{x_1+x_2+x_3=0\}$ can be seen as the `base' of that hyperboloid, and in fact the orthogonal projection $Sk\to\Pi$ is a homeomorphism. The triangle and infinite components mentioned above form an equilateral tripod picture on $\Pi$; see Figure \ref{fig:projection-tripod}.

\begin{figure}
    \centering
    \begin{tikzpicture}
        % fixed sets
        \foreach \j in {.1}{
            \foreach \i/\k in {1/left,2/below,3/right}{
                \begin{scope}[rotate={120*(\i-1)}]
            \draw[dashed] ({3*cos(210)+\j*cos(120)},{3*sin(210)+\j*sin(120)}) -- ({cos(210)+\j*cos(120)},{sin(210)+\j*sin(120)}) -- (-\j,1) -- (-\j,3) node[\k] {$\Fix(\trop(s_{\i}))$};
                \end{scope}
            }
        }
        %level 0
        \draw (90:1) -- (210:1) -- (-30:1) -- cycle;
        \foreach \i in {90,210,-30}{
        \draw (\i:1) -- (\i:3);
        }
    \end{tikzpicture}
    \caption{Projection of $Sk(\infty,\infty,\infty,d)$ ($d<0$) to the Plane $\Pi$, with Cells and Fixed sets sketched} \label{fig:projection-tripod}
\end{figure}
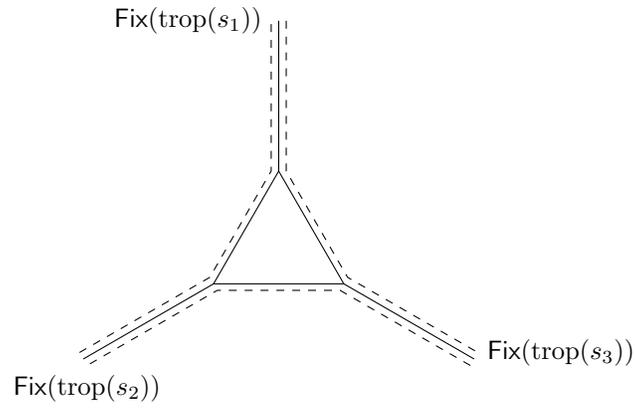

The action of $\trop(s_i)$'s on the plane $\Pi$ may be seen as a reflection along its fixed set (see Figure \ref{fig:projection-tripod} for sketches of fixed sets $\Fix(\trop(s_i))$). If we apply $\trop(s_i)$'s to the `tripod lines,' we see that the tripod reflects and develops to a new picture as in Figure \ref{fig:projection-tripod-lv1}. Freeze that figure, and apply reflections once again. This gives Figure \ref{fig:projection-tripod-lv2}. If we continue do this, we get a family of triangles that reminds us the $(\infty,\infty,\infty)$-triangle group action on $\Hh^2$, as sketched in Figure \ref{fig:inf-inf-inf-triangle}.

\begin{figure}
    \begin{minipage}{.5\linewidth}
        \centering
        \begin{tikzpicture}
                %level 0
    \draw (90:1) -- (210:1) -- (-30:1) -- cycle;
    \foreach \i in {90,210,-30}{
    \draw (\i:1) -- (\i:3);
    }
    %level 1
    \draw (30:1) -- (90:1) -- (150:1) -- (210:1) -- (270:1) -- (-30:1) -- cycle;
    \foreach \i in {30,150,270}{
    \draw (\i:1) -- (\i:3);
    }
        \end{tikzpicture}
        \caption{Reflections of the Tripod} \label{fig:projection-tripod-lv1}
      \end{minipage}%
      \begin{minipage}{.5\linewidth}
        \begin{tikzpicture}
                %level 0
    \draw (90:1) -- (210:1) -- (-30:1) -- cycle;
    \foreach \i in {90,210,-30}{
    \draw (\i:1) -- (\i:3);
    }
    %level 1
    \draw (30:1) -- (90:1) -- (150:1) -- (210:1) -- (270:1) -- (-30:1) -- cycle;
    \foreach \i in {30,150,270}{
    \draw (\i:1) -- (\i:3);
    }
        %level 2
        \foreach \i in {0,1,2,3,4,5}{
            \draw ({60*\i}:{3^(1/2)}) -- ({60*\i+30}:1) -- ({60*\i+60}:{3^(1/2)});
            \draw ({60*\i}:{3^(1/2)}) -- ({60*\i}:{3^(3/2)/2});
            }
        \end{tikzpicture}
        \caption{More Reflections of the Tripod} \label{fig:projection-tripod-lv2}
      \end{minipage}
\end{figure}

\begin{figure}
    \centering
    \begin{tikzpicture}[scale=2]
            % Poincare disk
            \draw (0,0) circle (1);
            % level 0
            \draw (-30:1) arc (-120:-180:{tan(60)}) arc (0:-60:{tan(60)}) arc (120:60:{tan(60)});
            % level 1
            \draw (-30:1) arc (-120:-240:{tan(30)}) arc (-60:-180:{tan(30)}) arc (0:-120:{tan(30)}) arc (60:-60:{tan(30)}) arc (120:0:{tan(30)}) arc (180:60:{tan(30)});
            % level 2
            \draw (-30:1) arc (-120:-270:{tan(15)}) arc (-90:-240:{tan(15)}) arc (-60:-210:{tan(15)}) arc (-30:-180:{tan(15)}) arc (0:-150:{tan(15)}) arc (30:-120:{tan(15)}) arc (60:-90:{tan(15)}) arc (90:-60:{tan(15)}) arc (120:-30:{tan(15)}) arc (150:0:{tan(15)}) arc (180:30:{tan(15)}) arc (210:60:{tan(15)});
            % level 3
            %\draw (-30:1) arc (-120:-285:{tan(7.5)}) arc (-105:-270:{tan(7.5)}) arc (-90:-255:{tan(7.5)}) arc (-75:-240:{tan(7.5)}) arc (-60:-225:{tan(7.5)}) arc (-45:-210:{tan(7.5)}) arc (-30:-195:{tan(7.5)}) arc (-15:-180:{tan(7.5)}) arc (0:-165:{tan(7.5)}) arc (15:-150:{tan(7.5)}) arc (30:-135:{tan(7.5)}) arc (45:-120:{tan(7.5)}) arc (60:-105:{tan(7.5)}) arc (75:-90:{tan(7.5)}) arc (90:-75:{tan(7.5)}) arc (105:-60:{tan(7.5)}) arc (120:-45:{tan(7.5)}) arc (135:-30:{tan(7.5)}) arc (150:-15:{tan(7.5)}) arc (165:0:{tan(7.5)}) arc (180:15:{tan(7.5)}) arc (195:30:{tan(7.5)}) arc (210:45:{tan(7.5)}) arc (225:60:{tan(7.5)});
    \end{tikzpicture}
    \caption{The $(\infty,\infty,\infty)$-triangle Tessellation of $\Hh^2$} \label{fig:inf-inf-inf-triangle}
\end{figure}
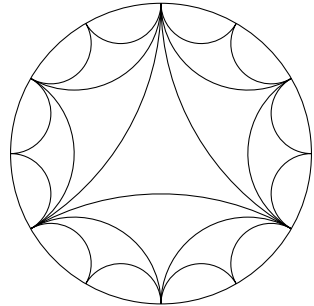

Now we have observed that the triangles in Figure \ref{fig:projection-tripod-lv2} correspond to ideal triangles in the hyperbolic plane. If we exclude the rays emitting from triangles, we get a subset of the skeleton corresponding to $\Hh^2$ \emph{equivariantly}. It turns out that it fills up `rational rays' in $\R^3$: one can even describe those rays as $\{(tq,tp,t(p+q)) : t\leq\frac12d\}$ or its images under a coordinate permutation, where $(p,q)$ runs through coprime nonnegative integers. See Figure \ref{fig:skeleton-and-exception-lines} to see the spatial configuration of these rays.

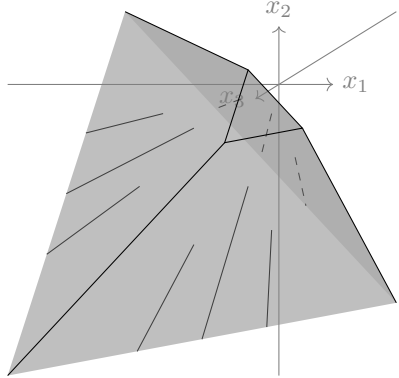
\begin{figure}
    \centering
    \begin{tikzpicture}[scale=.8,
        x={({(13/16)^(1/2)},0)}, % 13/16 + (.5*cos(30))^2 = 1
        y={(0,{(15/16)^(1/2)})}, % 15/16 + (.5*sin(30))^2 = 1
        z={({-.5*cos(30)},{-.5*sin(30)})}]
        %% base perspective, for the record
            %\coordinate (I) at ({-.5*cos(30)},{-.5*sin(30)});
            %\coordinate (J) at (1,0);
            %\coordinate (K) at (0,1);
        
            % layer 5: coordinate axes
            \draw[gray,->] (-5,0,0) -- (1,0,0) node[right] {$x_1$};
            \draw[gray,->] (0,-5,0) -- (0,1,0) node[above] {$x_2$};
            \draw[gray,->] (0,0,-5) -- (0,0,1) node[left] {$x_3$};
            % layer 4: Farey lines
            \draw (-1,-2,-1) -- (-2.5,-5,-2.5);
            \draw (-2,-1,-1) -- (-5,-2.5,-2.5);
            \draw[dashed] (-1,-1,-2) -- (-2.5,-2.5,-5);
            
            \draw[dashed] (-1,-2,-3) -- ({-5/3},{-10/3},-5);
            \draw[dashed] (-2,-1,-3) -- ({-10/3},{-5/3},-5);
            \draw (-1,-3,-2) -- ({-5/3},-5,{-10/3});
            \draw (-2,-3,-1) -- ({-10/3},-5,{-5/3});
            \draw (-3,-1,-2) -- (-5,{-5/3},{-10/3});
            \draw (-3,-2,-1) -- (-5,{-10/3},{-5/3});
            % layer 2: cell C(D)
            \foreach \i in {.5}{
            \fill[gray,opacity=\i] (-1,-1,0) -- (-1,0,-1) -- (0,-1,-1) -- cycle;
            % layer 3: quadratic cells
            \fill[gray,opacity=\i] (-1,-1,0) -- (-5,-5,0) -- (-5,0,-5) -- (-1,0,-1) -- cycle;
            \fill[gray,opacity=\i] (-1,-1,0) -- (-5,-5,0) -- (0,-5,-5) -- (0,-1,-1) -- cycle;
            \fill[gray,opacity={.5*\i}] (0,-1,-1) -- (0,-5,-5) -- (-5,0,-5) -- (-1,0,-1) -- cycle;
            }
            % layer 1: orbits of C(D)
            %\draw (-1,0,-1) -- (-2,-1,-1) -- (-1,-1,0) -- (-1,-2,-1) -- (0,-1,-1); %\draw[dashed] (0,-1,-1) -- (-1,-1,-2) -- (-1,0,-1);
            %\draw (-1,0,-1) -- (-3,-1,-2) -- (-2,-1,-1) -- (-3,-2,-1) -- (-1,-1,0) -- (-2,-3,-1) -- (-1,-2,-1) -- (-1,-3,-2) -- (0,-1,-1); %\draw[dashed] (0,-1,-1) -- (-1,-2,-3) -- (-1,-1,-2) -- (-2,-1,-3) -- (-1,0,-1);
            % layer 0: skeletal lines
            \draw (-1,-1,0) -- (-1,0,-1) -- (0,-1,-1) -- cycle;
            \draw (-5,-5,0) -- (-1,-1,0) ;%-- (-1,-1,5);
            \draw (-5,0,-5) -- (-1,0,-1) ;%-- (-1,5,-1);
            \draw (0,-5,-5) -- (0,-1,-1) ;%-- (5,-1,-1);
        \end{tikzpicture}
    \caption{Skeleton $Sk(\infty,\infty,\infty,d)$ ($d<0$) and Exceptional Lines} \label{fig:skeleton-and-exception-lines}
\end{figure}

We list what we have observed from the example above:
\begin{multicols}{2}
    \begin{itemize}
        \item Tropicalization $\Trop(S_{000D})$
        \item Skeleton $Sk(\infty,\infty,\infty,d)$
        \item Adjacency loci and Fixed sets of $\trop(s_i)$
        \item The `central triangle,' a.k.a. the Ping-pong Table, and its orbit
        \item Correspondence with the $(\infty,\infty,\infty)$-triangle tessellation of $\Hh^2$
        \item Exceptional rays
    \end{itemize}
\end{multicols}
Theorem \ref{thm:A} is then established by reproducing these observations for general $Sk(a,b,c,d)$ with $\min\{a,b,c,d\}<0$. The case $\min\{a,b,c,d\}\geq 0$ is treated on the basis of its `uniform dynamical model,' together with the Cayley cubic $S_{0004}$ that has a special identification with a pair of nonempty numbers.

Theorem \ref{thm:B} uses the fact that any point in the interior of the ping-pong table converges to infinity as we arbitrarily continue the reflections $\trop(s_i)$'s. This is a normality that we can observe in the tropicalized picture. One can utilize this to infer normality in any algebraically closed non-archimedean fields.

Theorem \ref{thm:C}, on contrary, focuses not only the ping-pong table but also the exceptional rays. The cases (i) and (ii) stated in the theorem in fact corresponds to points on the ping-pong table and exceptional rays (especially those emitting from the ping-pong table), respectively. Finitude of case (i) follows from that ping-pong tables are always bounded, so together with discrete valuations, we have our finitude. The statement is designed to lift this tropical behavior of orbits to that of rational points with prime power denominators.

\subsection{Structure of the Paper}

In section~\ref{sec:tropicalization}, we discuss the basics of tropicalizing affine varieties, and discuss the tropicalization of Markov cubics $S_{ABCD}$, as a subset of $\R^3$. Our approach do not rely on Newton polygons and we rather explicitly view a tropicalization as a union of boundaries of a convex region, called \emph{cells}. 
In section~\ref{sec:tropical-vieta-involutions}, we define tropicalized Vieta involutions, starting from a quadratic equation. We discuss how these tropicalized involutions interact with the algebraic involutions, and from that, we introduce the notion of \emph{skeleton}, the key invariant subset of our interest. 

In section~\ref{sec:holomorphic}, we focus on the holomorphic parameter case (all paramters having nonnegative values), which is the case when the tropical action on the skeleton can be completely linearlized. The analysis on the Cayley cubic surface $S_{0004}$ plays a key role here. Consequently, we prove one half of Theorem~\ref{thm:A} in the section.

In sections~\ref{sec:skeleton} to \ref{sec:quad-cell}, we establish detailed understandings of skeletons, aimed to be applicable for any parameters. Section~\ref{sec:skeleton} focuses on the topology of the skeleton and fixed loci of tropicalized Vieta involutions: a skeleton is homeomorphic to a plane. Section~\ref{sec:alignment-fixed-loci} focuses on how fixed loci of tropicalized involutions align on the plane. Section~\ref{sec:quad-cell} discusses on reducing skeletal points near the infinity into a compact region or a point on a(n exceptional) ray.

Section~\ref{sec:pp-theory} is a section that is aimed to develop some topological theory to compare our Vieta system $\Gamma_{ABCD}$ acting on the skeleton with the hyperbolic $(\infty,\infty,\infty)$-triangle action on the hyperbolic plane. The section is aimed to answer the following question: if two spaces $X,Y$ admit proper $\Gamma=(\Z/2\Z)^{\ast 3}$-actions, when can we say $X$ and $Y$ are $\Gamma$-equivariantly homeomorphic? Once this is established, section~\ref{sec:meromorphic-case} finishes the proof of Theorem~\ref{thm:A}, in the case of meromorphic parameters (one of parameters having negative value).

The rest are applications of the theorem established. Sections~\ref{sec:nA-Fatou} and \ref{sec:rational-points-compact-component} are for Theorems~\ref{thm:B} and \ref{thm:C}, respectively. In each case, some works are requied to bridge the tropical dynamics to those applicational interest, and majorities of the sections are on that.

\subsection*{Acknowledgements}
I would like to thank Roland Roeder, who introduced me these interesting systems, and Simion Filip, who suggested me to illuminate this system in a tropical view. I am also greatful to gain comments, discussions, and suggestions from Juhun Baik, Beno\^it Bertrand, Serge Cantat, Alex Eskin, Amit Ghosh, Joe Silverman, Christopher-Lloyd Simon, Peter Whang, and Shengyuan Zhao regarding this work.

This material is based upon work supported by the National Science Foundation under Grant No. DMS-2005470 and DMS-2305394. The research activities of the author are partially funded by the European Research Council (ERC GOAT 101053021).

\section{Tropicalization of Markov Surfaces}
\label{sec:tropicalization}

In this section, we recall the tropicalization of an affine surface defined over an algebraically closed non-archimedean field, and describe what is the tropicalization of Markov surfaces.

\subsection{Non-archimedean Fields}

We say a field $K$ a \emph{valued field} if there is a \emph{valuation map} $\val\colon K\to\R\cup\{\infty\}$ such that, whenever $x,y\in K$,
\begin{enumerate}[(i)]
    \item $\val(0)=\infty$, and $\val(x)<\infty$ whenever $x\neq 0$;
    \item $\val(xy)=\val(x)+\val(y)$; 
    \item $\val(x+y)\geq\min\{\val(x),\val(y)\}$; and
    \item $\val(x+y)=\min\{\val(x),\val(y)\}$ if $\val(x)\neq\val(y)$.
\end{enumerate}
We conventionally understand that $\infty+x=\infty$ and $\min\{\infty,x\}=x$ for all $x\in\R\cup\{\infty\}$. Note that $\val(1)=\val(1\cdot 1)=2\val(1)$ shows that $\val(1)=0$, and $2\val(-1)=\val((-1)^2)=\val(1)=0$ shows not only $\val(-1)=0$ but also $\val(-x)=\val(x)$.

A valuation is \emph{nontrivial} if there is $x\in K^\times$ with $\val(x)\neq 0$. Unlike usual conventions, we say a nontrivially valued field $(K,\val)$ a \emph{non-archimedean field} (that is, we are omitting completeness here). Examples include the $p$-adic number field $(\Q_p,\val_p)$ and the field of Laurent series $(\C((t)),\val_{t=0})$.

By Hensel's lemma (see, e.g., \cite[\S{II.4}, (4.6)]{AlgNumThry}), one can show that zeroes of an irreducible polynomial over a non-archimedean field $(K,\val)$ have the same values. Hence in any algebraic extension of $(K,\val)$ we can uniquely extend the valuation. By such, we understand that algebraic closure of $p$-adic numbers $(\overline{\Q_p},\val_p)$, or the field of Puiseux series $(\bigcup_{n=1}^\infty\C((t^{1/n})),\val_{t=0})$ are example of algebraically closed non-archimedean fields (which are not complete). For algebraic closedness of Puiseux series, see \cite[\S{IV.2} Proposition 8]{Serre}\cite[Theorem 2.1.5]{MS15}.

\subsection{Tropicalization of an Affine Hypersurface}

Despite various contexts that speak tropicalizations, we specifically consider the one for affine hypersurfaces defined over a non-archimedean field $(K,\val)$.

Let $F(X_1,\ldots,X_d)\in K[X_1^{\pm 1},\ldots,X_d^{\pm 1}]$ be a Laurent polynomial which is not a monomial. Consider the hypersurface $V\subset\mathbb{G}_m^d$ in the algebraic torus defined by the equation $F(X_1,\ldots,X_d)=0$. The \emph{tropicalization} of $V$ is then defined as the closure of values of $K$-points of $V$:
\begin{align*}
    \Trop(V) &:= \overline{\val(V(K))} \\
    &= \overline{\left\{(x_1,\ldots,x_d)\in\R^d : \begin{array}{l}
        \text{there is }(X_1,\ldots,X_d)\in V(K) \\
        \text{in which }\val(X_i)=x_i\ \forall i
    \end{array}\right\}}.
\end{align*}

There are few instances in which this definition can compute the tropicalization of a hypersurface (e.g., $V=(X_1+X_2-1=0)$), but in general we compute it via Kaparnov's theorem \cite[Theorem 2.1.1]{EKL06}\cite[Theorem 3.1.3]{MS15}.

To explain what the theorem is, define the \emph{tropicalized polynomial} $\trop(F)$ of the defining polynomial $F(X)$ as follows. Write $F(X)=\sum c_\alpha X^\alpha$, where $\alpha=(\alpha_1,\ldots,\alpha_d)\in\Z^d$ runs through multiindices, $X^\alpha=X_1^{\alpha_1}\cdots X_d^{\alpha_d}$ is the corresponding monomial, and $c_\alpha\in K$ is a coefficient. Then $\trop(F)$ is
\begin{align*}
    \trop(F)(x_1,\ldots,x_d) &= \min_\alpha\left(\val(c_\alpha)+\alpha\cdot(x_1,\ldots,x_d)\right) \\
    &= \min_\alpha\left(\val(c_\alpha)+\sum_{i=1}^d\alpha_ix_i\right).
\end{align*}
Here, as all but finitely many $\alpha$'s have $\val(c_\alpha)=\infty$, such $\alpha$'s does not contribute to the minimum above. By a \emph{tropical monomial} we mean the terms $\val(c_\alpha)+\alpha\cdot x$ (where $x=(x_1,\ldots,x_d)$) in the minimum. Morally, it is expressing the value of $c_\alpha X^\alpha$, viewing that $x_i=\val(X_i)$ for $1\leq i\leq d$.

\begin{theorem}[Kapranov]
    With $V=(F=0)$ and $\trop(F)$ above, we have
    \begin{align*}
    \Trop(V) 
    &= \left\{ x\in\R^d : (\exists \alpha\neq\beta)(\val(c_\alpha)+\alpha\cdot x = \val(c_{\beta})+\beta\cdot x=\trop(F)(x))\right\} \\
    &= \left\{ x\in\R^d : \begin{array}{l}\text{two tropical monomials of $\trop(F)$, at $x$,} \\ \text{evaluate to the same value }=\trop(F)(x) \end{array}\right\}.
    \end{align*}
\end{theorem}

Here is a brief explanation on why we believe this to be true. First, observe the following. 
If $X\in V(K)$ is a point, then at least two monomials of $F(X)$ have the same value which is least among values of monomials of $F(X)$. (Suppose otherwise; then by that $\val(x+y)=\val(x)$ if $\val(x)<\val(y)$, one sees that $\val(F(X))$ equals to the minimum value of monomials of $F(X)$, which cannot be infinite since $X\in\mathbb{G}_m^d(K)=(K^\times)^d$. This contradicts to $F(X)=0$.) 
If we translate this to a ``tropical'' fashion, if $x\in\R^d$ is such that there is a unique tropical monomial of $\trop(F)(x)$ that attains the minimum, then $x$ cannot be the value of any point $X\in V(K)$. The aforementioned Kapranov's theorem states that the converse is true.

Observe that $\trop(F)$ defines a continuous piecewise-linear function $\R^d\to\R$, and on an open subset, $\trop(F)$ locally equals to a tropical monomial $\val(c_\alpha)+\alpha\cdot x$. The complement of that open subset is precisely where $\trop(F)$ is not differentiable, or the minimum is attained by more than two tropical monomials. Hence we have the following identification as well:
\begin{align*}
    \Trop(V) 
    &= \left\{ x\in\R^d : \trop(F)\text{ is not differentiable at }x\right\}.
\end{align*}

\subsubsection{Cells}

The complement of $\Trop(V)$, which is precisely where $\trop(F)$ is differentiable, has several connected components which are characterized by the tropical monomial to which $\trop(F)$ equals. So for each monomial $c_\alpha X^\alpha$ of $F(X)$, it is natural to define the \emph{cell} of $c_\alpha X^\alpha$ by $\trop(F)$ as an open set
\begin{align}
\label{eqn:cell-by-trop-polynomial}
\cell(c_\alpha X^\alpha)&:=\left\{x\in\R^d : \val(c_\alpha)+\alpha\cdot x=\trop(F)(x)\right\}^\circ \\
&= \left\{x\in\R^d : \val(c_\alpha)+\alpha\cdot x<\trop(F-c_\alpha X^\alpha)(x)\right\}, \nonumber
\end{align}
where $A^\circ$ denotes the interior of $A$.
\iffalse
We take closure of the interior to avoid the following exceptional case: there is a monomial $c_\alpha X^\alpha$ in which $\val(c_\alpha)+\alpha\cdot x=\trop(F)(x)$ only if $\trop(F)$ is not differentiable at $x$. In that case, the complement of the tropicalization may ``neglect'' such a monomial, and this is done by taking the closure of the interior, and see if it yields an empty set.

\begin{example}
    Consider $F(X_1,X_2)=X_2-X_3^2-X_3-1$. Then we have $\trop(F)=\min\{x_2,2x_1,x_1,0\}$. Although points of the form $x=(0,a)$ with $a\geq 0$ satisfy $x_1=\trop(F)(x)$, they are the only points where we have such a equality, and they are all on the tropicalization of the locus $F=0$. Indeed, we have $\cell(-X_1)=\varnothing$.
\end{example}
\fi

The union of all cells equals to the complement $\R^d\setminus\Trop(V)$. The tropicalization $\Trop(V)$ equals to the union of boundaries of cells. We say two cells $\cell(c_\alpha X^\alpha)$ and $\cell(c_\beta X^\beta)$ are \emph{adjacent} if the intersection of their boundaries $\partial\cell(c_\alpha X^\alpha)\cap\partial\cell(c_\beta X^\beta)$ contains an open subset of $\Trop(V)$.
 
We introduce a shorthand for the intersection
\begin{equation}
\label{eqn:adjacency-locus}
\partial\cell(c_\alpha X^\alpha\cap c_\beta X^\beta):=\partial\cell(c_\alpha X^\alpha)\cap\partial\cell(c_\beta X^\beta),
\end{equation}
and call it the \emph{adjacency locus} if the cells $\cell(c_\alpha X^\alpha)$ and $\cell(c_\beta X^\beta)$ are adjacent. We see that this locus is a convex subset of the hyperplance $(\alpha-\beta)\cdot x=\val(c_\beta/c_\alpha)$. If there is an embedding of an open subset $\subset\R^{d-1}$ into $\partial\cell(c_\alpha X^\alpha\cap c_\beta X^\beta)$, then it shows that the cells $\cell(c_\alpha X^\alpha)$ and $\cell(c_\beta X^\beta)$ are adjacent.

Our approach to understand the tropicalization of the Markov surface will be based on specifying which cells are nonempty and listing adjacent cells of each cell. This may be away from classical approaches related to Newton polygons or toric varieties (see, e.g., \cite{KKE21}), but we choose this approach to explicitly study internal coordinates on the tropicalization and dynamical properties based on that coordinates.

\subsection{Tropicalization of Markov Surfaces}

We know that
\[F_{ABCD}(X_1,X_2,X_3)=X_1^2+X_2^2+X_3^2+X_1X_2X_3-AX_1-BX_2-CX_3-D\]
is the defining equation of the Markov surface $S_{ABCD}$, i.e., $S_{ABCD}\colon F_{ABCD}=0$. If parameters $A$, $B$, $C$, and $D$ lie on a (fixed) non-archimedean field $(K,\val)$, then the tropicalized polynomial of $F_{ABCD}$ is
\begin{align}
    f_{abcd}(x_1,x_2,x_3) &= \min\left\{\begin{array}{l}2x_1,2x_2,2x_3,x_1+x_2+x_3, \\ a+x_1,b+x_2,c+x_3,d \end{array}\right\}, \label{eqn:trop-polynomial}
\end{align}
where $a=\val(A)$, $b=\val(B)$, $c=\val(C)$, and $d=\val(D)$ are valuations of coefficients. Then $\Trop(S_{ABCD})$ is precisely the locus where $f_{abcd}$ is not differentiable. In particular, this locus only depends on the values of parameters $(a,b,c,d)$.

Examples of sketches of $\Trop(S_{ABCD})$ for (i) $a=b=c=\infty$ and $d>0$ can be found in Figure \ref{fig:second-tropicalization}, (ii) $a=b=c=\infty$ and $d<0$ can be found in Figure \ref{fig:first-tropicalization}, (iii) $(a,b,c,d)=(\infty,\infty,-1.5,-2)$ can be found in Figure \ref{fig:trop-markov-1}, and (iv) $(a,b,c,d)=(-1.3,\infty,-1.5,-2.3)$ can be found in Figure \ref{fig:trop-markov-2}.

\begin{figure}
    \centering
    \includegraphics[width=.8\textwidth]{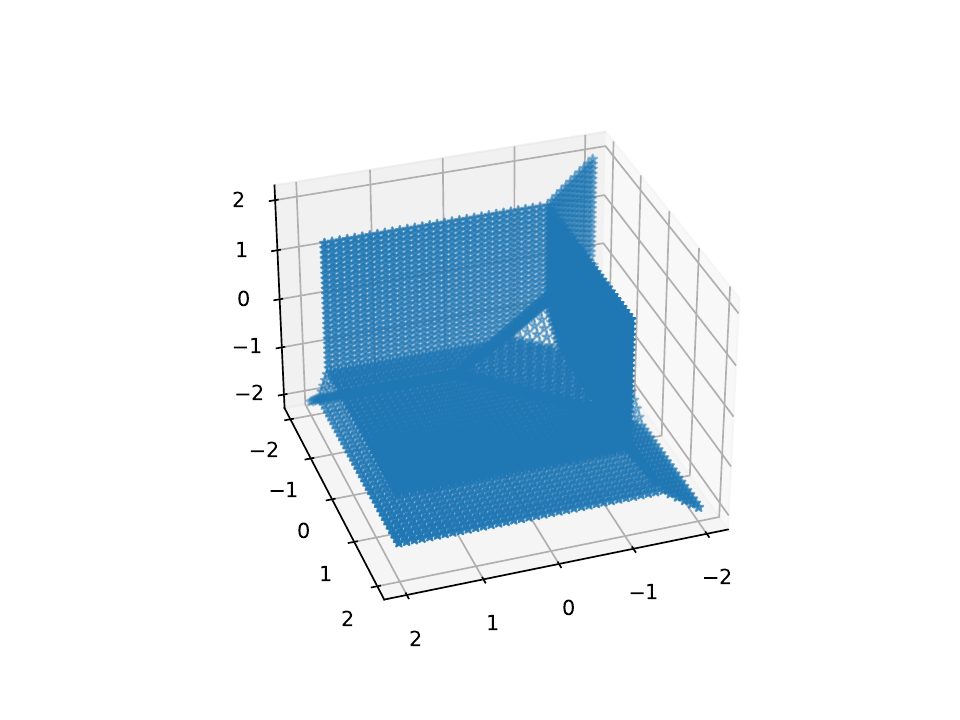}
    \caption{$\Trop(S_{ABCD})$ with $(a,b,c,d)=(\infty,\infty,-1.5,-2)$.} \label{fig:trop-markov-1}
\end{figure}
\begin{figure}
    \centering
    \includegraphics[width=.8\textwidth]{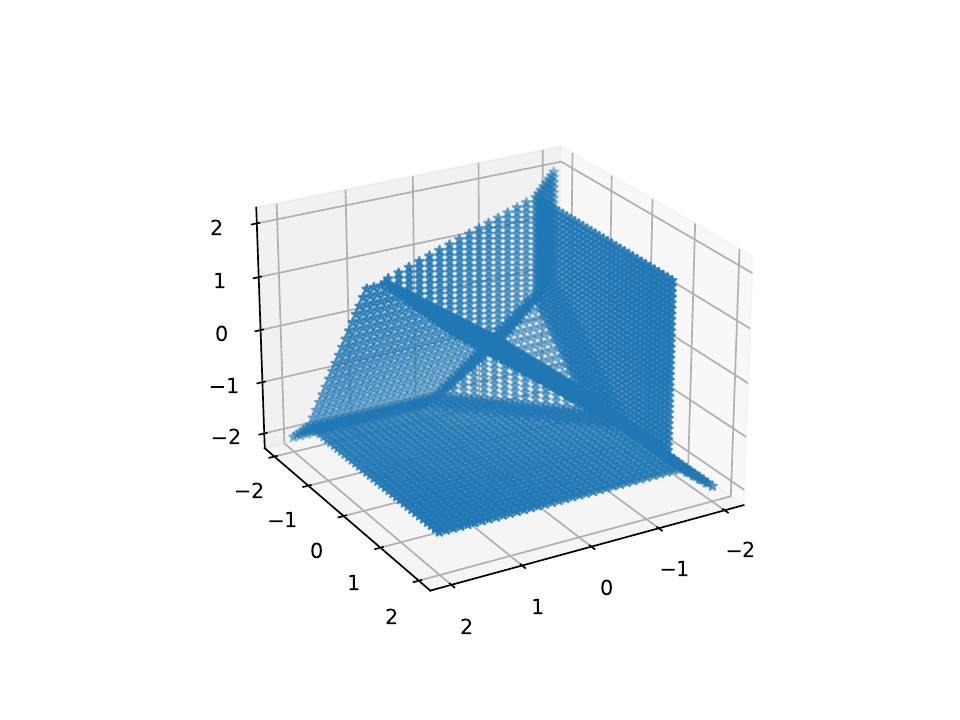}
    \caption{$\Trop(S_{ABCD})$ with $(a,b,c,d)=(-1.3,\infty,-1.5,-2.3)$.} \label{fig:trop-markov-2}
\end{figure}

\subsubsection{Analysis of Cells} By $f_{abcd}$, we have at most 8 cells in the space, in which we classify them according to the monomial degrees:
\begin{itemize}
    \item the \emph{cubic cell} $\cell(X_1X_2X_3)$,
    \item the \emph{quadratic cells} $\cell(X_1^2)$, $\cell(X_2^2)$, $\cell(X_3^2)$,
    \item the \emph{linear cells} $\cell(-AX_1)$,$\cell(-BX_2)$, $\cell(-CX_3)$, and
    \item the \emph{0-degree cell} $\cell(-D)$.
\end{itemize}
We may drop signs and view $\cell(D):=\cell(-D)$ and $\cell(AX_1):=\cell(-AX_1)$, etc. The linear cells and 0-degree cell are called \emph{sub-quadratic cells}. Various observations about the nonemptiness of cells and adjacency of cells are established by elementary algebra of inequalities: they are listed as follows.

\begin{lemma}[Nonemptiness]
\label{lem:nonempty}
    \begin{enumerate}[(a)]
        \item The cubic cell and quadratic cells are always nonempty.
        \item The 0-degree cell is nonempty iff $d<\infty$.
        \item The linear cell $\cell(AX_1)$ is nonempty iff $2a<d$; similar for other two linear cells.
    \end{enumerate}
\end{lemma}
\iftrue
\begin{proof}
    (a) If $M\gg 0$, relative to $|a|$, $|b|$, $|c|$, and $|d|$, then $(-M,-M,-M)$, $(-M,M,M)$, $(M,-M,M)$, and $(M,M,-M)$ are respectively in $\cell(X_1X_2X_3)$, $\cell(X_1^2)$, $\cell(X_2^2)$, and $\cell(X_3^2)$.

    (b) If $M\gg 0$, relative to $|a|$, $|b|$, $|c|$, and $|d|$, then $(M,M,M)$ is in $\cell(D)$, unless $d=\infty$.

    (c) If a point $(x_1,x_2,x_3)$ exists in $\cell(AX_1)$, then that point verifies $a+x_1<d$ and $a+x_1<2x_1$. Hence $a,x_1<\infty$ and
    \[2a=a+(a+x_1)-x_1<a+(2x_1)-x_1=a+x_1<d.\]
    If $2a<d$, then $a<\infty$, and as $a<d-a$, there is $x_1<\infty$ sitting between two strictly. If $M\gg 0$, then $(x_1,M,M)\in\cell(AX_1)$.
\end{proof}
\fi

\begin{lemma}[Adjacency]
\label{lem:adjacency}
    \begin{enumerate}[(a)]
        \item The cubic cell and quadratic cells are pairwise adjacent.
        \item The 0-degree cell, if nonempty, is adjacent to exactly one of $\cell(AX_1)$ or $\cell(X_1^2)$, depending on whether $\cell(AX_1)\neq\varnothing$ or not; similar for other two variables.
        \item A linear cell $\cell(AX_1)$, if nonempty, is adjacent to $\cell(X_1^2)$; similar for other two.
        \item Any two nonempty linear cells are adjacent.
        \item The cells $\cell(AX_1)$ and $\cell(X_2^2)$ are adjacent iff $\cell(AX_1)\neq\varnothing$ and $a<b$.
        Similar for other 5 pairs, e.g., $\cell(CX_3)$ and $\cell(X_1^2)$.
        \item\label{enum:adjacency-d-x1x2x3} The cubic cell and the 0-degree cell are adjacent iff
        \begin{equation}
        \label{eqn:adjacency-d-x1x2x3}
        	d<\min\left\{0,2a-d\right\}+\min\{0,2b-d\}+\min\{0,2c-d\}.
        \end{equation}
        \item\label{enum:adjacency-lin-x1x2x3} The cubic cell and a linear cell $\cell(AX_1)$ are adjacent iff $\cell(AX_1)\neq\varnothing$ and
        \begin{equation}
        \label{eqn:adjacency-lin-x1x2x3}
        a<\min\{0,b-a\}+\min\{0,c-a\}.
        \end{equation}
        Similar for other two linear cells.
    \end{enumerate}
\end{lemma}
\iftrue
\begin{proof}
    (a) For $M,N\gg 0$, observe that the point $(-M,-N,-M-N)$ is in $\partial\cell(X_1X_2X_3\cap X_3^2)$. Argue symmetrically for other quadratic cells.
    
    For two quadratic cells, for $M,N\gg 0$, observe that the point $(-M,-M,N)$ is in $\partial\cell(X_1^2\cap X_2^2)$. Argue symmetrically for other pairs.
    
    (b) Suppose $d<\infty$. If $2a<d$, so that $\cell(AX_1)\neq\varnothing$, then for any $M,N\gg 0$, we have $(d-a,M,N)\in\partial\cell(AX_1\cap D)$. Meanwhile, $\cell(D)$ cannot be adjacent to $\cell(X_1^2)$ because any point on $2x_1=d$ verifies $a+x_1<\frac12d+x_1=2x_1=d$, which cannot belong to the closure of $\cell(D)$ nor $\cell(X_1^2)$. 
    If $2a\geq d$, then $\cell(AX_1)=\varnothing$, and for any $M,N\gg 0$, we have $(\frac12d,M,N)\in\partial\cell(X_1^2\cap D)$.
    
    (c) Suppose $2a<d$. For any $M,N\gg 0$, any point $(a,M,N)\in\partial\cell(AX_1\cap X_1^2)$.
    
    (d) Suppose, without loss of generality, $\cell(AX_1)$ and $\cell(BX_2)$ are nonempty, and $a\leq b$. Then we have $2a\leq 2b<d$. Any point $(x_1,x_2,x_3)\in\partial\cell(AX_1\cap BX_2)$ verifies $a+x_1=b+x_2$ and the inequalities
    \begin{align*}
    	a+x_1 &< 2x_1, &
    	a+x_1 &< 2x_2, &
    	a+x_1 &< 2x_3, \\
    	a+x_1 &< x_1+x_2+x_3, &
    	a+x_1 &< c+x_3, &
    	a+x_1 &< d.
    \end{align*}
    These inequalities solve to $\max\{a,2b-a\}<x_1<d-a$ and $x_3>\max\{\frac12(a+x_1),b-x_1,c-a+x_1\}$. As $a<d-a$ (from $2a<d$) and $2b-a<d-a$ (from $2b<d$), we see that $x_1$ lies on a nondegenerate open interval; so if we choose $x_3\gg 0$, we see that $(x_1,a-b+x_1,x_3)$ lies on the adjacency locus.
    
    (e) If $(x_1,x_2,x_3)\in\partial\cell(D\cap X_1X_2X_3)$, it verifies $d=x_1+x_2+x_3$, thus $x_3=d-x_1-x_2$, and thus
    \begin{align*}
    	d &< 2x_1, &
    	d &< 2x_2, &
    	d &< 2(d-x_1-x_2), \\
    	d &< a+x_1, &
    	d &< b+x_2, &
    	d &< c+(d-x_1-x_2).
    \end{align*}
    %Apply variable elimination methods, e.g. \cite[pp. 682--683]{FourierMotzkinElim}. 
    Solving the inequality, just as above, we see that the inequality~\eqref{eqn:adjacency-d-x1x2x3} is equivalent to that the above inequalities have an open set of solutions $(x_1,x_2)$.
    
    (f) If $(x_1,x_2,x_3)\in\partial\cell(AX_1\cap X_1X_2X_3)$, it verifies $a+x_1=x_1+x_2+x_3$, thus $x_3=a-x_2$, and thus
    \begin{align*}
    	a+x_1 &< 2x_1, &
    	a+x_1 &< 2x_2, &
    	a+x_1 &< 2(a-x_2), \\
    	a+x_1 &< b+x_2, &
    	a+x_1 &< c+(a-x_2), &
    	a+x_1 &< d.
    \end{align*}
    Solving the inequality, we see that the inequality~\eqref{eqn:adjacency-lin-x1x2x3} and $2a<d$ is equivalent to that the above inequalities have an open set of solutions $(x_1,x_2)$.
\end{proof}
\fi

%The proofs of above lemmas are postponed to the appendix.

In Lemma~\ref{lem:adjacency}(\ref{enum:adjacency-d-x1x2x3}), the inequality is false if $\cell(D)$ is empty: indeed, if we plug in $d=\infty$ we have $\infty<-\infty$, which is (conventionally) false. Also, combining \eqref{eqn:adjacency-d-x1x2x3} and \eqref{eqn:adjacency-lin-x1x2x3}, we have the following

\begin{corollary}
    \label{lem:cubic-cell-adj-subquadratic-iff-meromorphic}
    The cubic cell is adjacent to a sub-quadratic cell iff $\min\{a,b,c,d\}<0$.
\end{corollary}

\section{Vieta Involutions in Tropicalizations}
\label{sec:tropical-vieta-involutions}

In this section, we argue that the appropriate tropicalizations of the Vieta involutions
\begin{align*}
    s_1(X_1,X_2,X_3) &= (-X_1-X_2X_3+A, X_2, X_3), \\
    s_2(X_1,X_2,X_3) &= (X_1,-X_2-X_1X_3+B,X_3), \\
    s_3(X_1,X_2,X_3) &= (X_1,X_2,-X_3-X_1X_2+C)
\end{align*}
are
\begin{align}
    \trop(s_1)(x_1,x_2,x_3) &= (\min\{2x_2,2x_3,b+x_2,c+x_3,d\}-x_1,x_2,x_3), \label{eqn:trop-vieta-1} \\
    \trop(s_2)(x_1,x_2,x_3) &= (x_1,\min\{2x_1,2x_3,a+x_1,c+x_3,d\}-x_2,x_3), \label{eqn:trop-vieta-2} \\
    \trop(s_3)(x_1,x_2,x_3) &= (x_1,x_2,\min\{2x_1,2x_2,a+x_1,b+x_2,d\}-x_3). \label{eqn:trop-vieta-3}
\end{align}
The appropriateness can only be validated at a special subset, the \emph{tropical domains} $\mathrm{dom}(s_i)$'s (see Proposition~\ref{lem:tropical-domain}). The intersection of these domains lie densely in the special invariant subset called the \emph{skeleton} $Sk(a,b,c,d)\subset\Trop(S_{ABCD})$ of the tropicalization, which is the boundary of the cubic cell $\cell(X_1X_2X_3)$ (see Definition~\ref{def:skeleton}).

\subsection{Vieta Involutions in 1 Variable}

Consider the quadratic equation $X^2+C_1X+C_0=0$ over a non-archimedean field $(K,\val)$, with $C_0\neq 0$. Let $c_1=\val(C_1)$ and $c_0=\val(C_0)$. For its zeroes $X_1,X_2$, we can estimate their values by the tropicalization:
\begin{align}
    \Trop(X^2+C_1X+C_0=0) &= \left\{\val(X_1),\val(X_2)\right\} \nonumber\\
    &= \left\{x_0\in\R : \begin{array}{l}\min\{2x,c_1+x,c_0\}\text{ is not} \\ \text{differentiable at }x_0 \end{array}\right\} \nonumber\\
    &=\begin{cases} \{c_1,c_0-c_1\} & (2c_1<c_0), \\ \{\frac12c_0\} & (2c_1\geq c_0).\end{cases} \label{eqn:newton-polytope-for-quadratic}
\end{align}

The Vieta involution $s(X)=C_1-X=C_0/X$ should be switching the zeroes $X_1$ and $X_2$; as $X_1X_2=C_0$, we see that $\val(X_1)+\val(X_2)=\val(C_0)=c_0$, hence $x\mapsto c_0-x$ is transposing the values of $X_1$ and $X_2$, as we can see from the sets $\{c_1,c_0-c_1\}$ and $\{\frac12c_0\}$.

\subsubsection{General Quadratic Family}

In general, suppose we have an affine hypersurface of the form $X_0+C_1(Y)X_0+C_0(Y)=0$ with additional variables $Y=(Y_1,\ldots,Y_k)$, and $C_1,C_0$ being Laurent polynomials with respect to $Y$.

Then, on the locus where $C_0(Y)\neq 0$, the tropicalization of the Vieta involution $(X_0;Y)\mapsto(C_0(Y)/X_0;Y)$ (that aligns with the above observation) is sending $\val(X_0)$ to $\val(C_0(Y))-\val(X_0)$. Especially when $y=\val(Y)$ (i.e., $y=(y_1,\ldots,y_k)$ with $y_i=\val(Y_i)$) with $y\notin\Trop(C_0(Y)=0)$, then $\val(C_0(Y))=\trop(C_0)(y)$, and we have
\[x_0\mapsto\trop(C_0)(y)-x_0\]
switching the values of roots.

\subsubsection{Tropcial Domain}

If we view the equation $X_0^2+C_1(Y)X_0+C_0(Y)=0$ as an affine hypersurface $V$ in variables $(X_0;Y)=(X_0;Y_1,\ldots,Y_k)$, then we may ask about the relation between the Vieta involution $s(X_0;Y)=(C_0(Y)/X_0;Y)$ and its tropicalized version $\trop(s)(x_0;y)=(\trop(C_0)(y)-x_0;y)$, say whether the valuation conjugates two. But we need to be aware of the locus $C_0(Y)=0$ as an exception; that is, we rather consider
\[\Trop(V)\setminus(\R\times\Trop(C_0(Y)=0))\]
as the correct domain of $\trop(s)$. This is an open subset of $\Trop(V)$, called the \emph{tropical domain} of $\trop(s)$, and denoted by $\mathrm{dom}(s)$. On there, we see that the valuation conjugates $s$ and $\trop(s)$:

\begin{proposition}
    \label{lem:tropical-domain}
    Let $V$ be an affine hypersurface, with space coordinates $(X_0;Y_1,\ldots,Y_k)=:(X_0;Y)$, by the equation $X_0^2+C_1(Y)X_0+C_0(Y)=0$. Let $s(X_0;Y)=(C_0(Y)/X_0;Y)$ be the Vieta involution and let $\trop(s)(x_0;y)=(\trop(C_0)(y)-x_0;y)$ be its tropicalization.
    
    Let $P=(P_0;Q_1,\ldots,Q_k)\in V(K)$ be a $K$-point such that none of its coordinates are zero. If $\val(P)=(\val(P_0);\val(Q_1),\ldots,\val(Q_k))\in\mathrm{dom}(s)$, then
    \[\trop(s)(\val(P))=\val(s(P))\in\mathrm{dom}(s).\]
\end{proposition}

Note that the condition $C_0(Q_1,\ldots,Q_k)\neq 0$ is \emph{weaker} than $\val(P)\in\mathrm{dom}(s)$.

\begin{proof}
    Define $p_0=\val(P_0)$, $Q=(Q_1,\ldots,Q_k)$, and $q=\val(Q)$. Because $(p_0;q)\in\mathrm{dom}(s)$, we have $q\notin\Trop(C_0(Y)=0)$. Hence $\trop(C_0)(q)$ is evaluated by a unique tropical monomial of $\trop(C_0)$. The value of this tropical monomial must equal to the value of $C_0(Q)$, because the corresponding (polynomial) monomial of $C_0$ takes the least value among its monomials. Hence we have $\trop(C_0)(q)=\val(C_0(Q))$. This suffices to show that
    \begin{align*}
        \trop(s)(\val(P)) = \trop(s)(p_0;q) &= (\trop(C_0)(q)-p_0;q) \\
        &= (\val(C_0(Q))-\val(P_0);\val(Q)) \\
        &= (\val(C_0(Q)/P_0);\val(Q))=\val(s(P)).
    \end{align*}
    The inclusion in $\mathrm{dom}(s)$ is clear, because it only depends on the $q$ part of the coordinates.
\end{proof}

Outside of the tropical domain, all we can say about how $\trop(s)(\val(P))$ and $\val(s(P))$ are related is that each coordinate of the former is less than the latter: symbolically, we write
\begin{equation}
    \label{eqn:251218-1}
    \trop(s)(\val(P))\leq\val(s(P)).
\end{equation}

\subsection{Markov Context}
\label{sec:markov-trop-vieta}

First above all, we now note that equations \eqref{eqn:trop-vieta-1}, \eqref{eqn:trop-vieta-2}, and \eqref{eqn:trop-vieta-3} are re-iterating the formula $\trop(s)(x_0;y)=(\trop(C_0)(y)-x_0;y)$. For example, to tropicalize $s_3$, we put $C_0(X_1,X_2)=X_1^2+X_2^2-AX_1-BX_2-D$ and get
\[\trop(s_3)(x_3;x_1,x_2)=(\min(2x_1,2x_2,a+x_1,b+x_2,d)-x_3;x_1,x_2),\]
and rearrange the coordinates in the correct order to get \eqref{eqn:trop-vieta-3}.

By Proposition \ref{lem:tropical-domain}, the intersection $\bigcap_{i=1}^3\mathrm{dom}(s_i)$ would be the most dynamically relevant subset of $\Trop(S_{ABCD})$, under the tropicalized Vieta involutions. However, as the tropical domain $\mathrm{dom}(s_i)$ is not $\trop(s_j)$-invariant whenever $i\neq j$, the correct ``main invariant'' subset should be
\[\bigcap_{\gamma\in\trop(\Gamma_{ABCD})}\gamma.\bigcap_{i=1}^3\mathrm{dom}(s_i),\]
where $\trop(\Gamma_{ABCD})$ is a shorthand of $\langle\trop(s_1),\trop(s_2),\trop(s_3)\rangle$.

\begin{remark}
    Later, we will show that there is a group map $\trop\colon\Gamma_{ABCD}\to\trop(\Gamma_{ABCD})$, to be called the \emph{tropical representation}, after developing some theory of tropical actions. See Definition~\ref{def:tropical-representation} below. The notation $\trop(\Gamma_{ABCD})$ may be understood as the image of this group map, but to avoid circular logic, we stick to view the group as a formal name of the group (of piecewise-linear automorphisms of $\R^3$) generated by tropicalized Vieta involutions. (Nonetheless, if we insist, we may use El'Huti's result $\Gamma_{ABCD}\cong(\Z/2\Z)^{\ast 3}$~\cite[Thm. 1]{ElHuti} to introduce the map in advance.)
\end{remark}

To study the shape of this ``main invariant'' subset, we introduce a subset of $\Trop(S_{ABCD})$.

\begin{definition}[Skeleton]
\label{def:skeleton}
    The boundary of the cubic cell $\cell(X_1X_2X_3)$ is called the \emph{skeleton} of $S_{ABCD}$, and is denoted by $Sk(a,b,c,d)$, where $a=\val(A)$, $b=\val(B)$, $c=\val(C)$, and $d=\val(D)$.
\end{definition}

Recall that the cubic cell is characterized by the following inequality,
\[x_1+x_2+x_3<\min\left\{\begin{array}{l}2x_1,2x_2,2x_3, \\ a+x_1,b+x_2,c+x_3,d \end{array}\right\}.\]
If we trace adjacent loci attached to the cubic cell, we see that the skeleton is characterized by the following equality:
\begin{equation}
    \label{eqn:skeleton-equation}
    x_1+x_2+x_3=\min\left\{\begin{array}{l}2x_1,2x_2,2x_3, \\ a+x_1,b+x_2,c+x_3,d \end{array}\right\}.
\end{equation}

The name of this subset comes from Kontsevich--Soibelman skeleton \cite[\S{6.6}]{KS06} of a smooth proper algebraic variety over a non-archimedean field with trivial canonical divisor, and is named \emph{skeleton} in the K3 context, according to \cite[\S{2.4.2}]{filip2019tropical}. 
But instead of introducing this set by heritage, we show that the ``main invariant'' subset is a dense subset of the skeleton.

\begin{proposition}
\label{lem:pre-skeleton}
    The intersection $\bigcap_{\gamma\in\trop(\Gamma_{ABCD})}\bigcap_{i=1}^3\gamma.\mathrm{dom}(s_i)$ is a dense subset of the skeleton of the surface.
\end{proposition}

Furthermore, the intersection is an open subset if one of the parameters has negative value. But we will establish this after discussing the action of $\trop(\Gamma_{ABCD})$ on the skeleton (see Lemma~\ref{lem:tropical-domain-open}).

\subsubsection{Proof of Proposition \ref{lem:pre-skeleton}}

We need two facts to be shown: inclusion and density. The inclusion is easy, as seen in the following
\begin{lemma}
    The intersection of tropical domains $\bigcap_{i=1}^3\mathrm{dom}(s_i)$ is in the boundary of the cubic cell $\cell(X_1X_2X_3)$.
\end{lemma}
\begin{proof}
    Any point $x\in\Trop(S_{ABCD})$ which is not on the boundary of the cubic cell admits two tropical monomials among $2x_1$, $2x_2$, $2x_3$, $a+x_1$, $b+x_2$, $c+x_3$, or $d$, whose values are the same (and equal to $f_{abcd}(x)$). By a manual check, one sees that these monomials appear simultaneously in one of the sets $S_1=\{2x_2,2x_3,b+x_2,c+x_3,d\}$, $S_2=\{2x_1,2x_3,a+x_1,c+x_3,d\}$, or $S_3=\{2x_1,2x_2,a+x_1,b+x_2,d\}$.
    
    Say $S_1$ is containing them. Then the minimum of $S_1$ (evaluated at $x$) is attained at two or more tropical monomials, thus $(x_2,x_3)\in\Trop(X_2^2+X_3^2=BX_2+CX_3+D)$, whence $x\notin\mathrm{dom}(s_1)$.
\end{proof}

\begin{lemma}
\label{lem:skeleton-invariance}
    The skeleton is invariant under the group $\trop(\Gamma_{ABCD})$.
\end{lemma}
\begin{proof}
    By symmetry, it suffices to show the invariance by $\trop(s_1)$. Recall the equality~\eqref{eqn:skeleton-equation} characterizing the skeleton.

    Write $\trop(s_1)(x_1,x_2,x_3)=(A-x_1,x_2,x_3)$, with $A=\min\{2x_2,2x_3,b+x_2,c+x_3,d\}$. Then the equation~\eqref{eqn:skeleton-equation} may be rewritten as $x_1+x_2+x_3-\min\{2x_1,a+x_1,A\}=0$. If we replace $x_1$ in the left hand side to $A-x_1$, we have
    \begin{align*}
        &(A-x_1)+x_2+x_3-\min\{2(A-x_1),a+(A-x_1),A\} \\
        &= x_1+x_2+x_3-(2x_1-A)-\min\{2A-2x_1,a+A-x_1,A\} \\
        &= x_1+x_2+x_3-\min\{2A-2x_1+(2x_1-A),a+A-x_1+(2x_1-A),A+(2x_1-A)\} \\
        &= x_1+x_2+x_3-\min\{A,a+x_1,2x_1\},
    \end{align*}
    showing the invariance.
\end{proof}

So this shows that the intersection $\bigcap_{\gamma\in\trop(\Gamma_{ABCD})}\bigcap_{i=1}^3\gamma.\mathrm{dom}(s_i)$ is in the skeleton. 
Density is more involved, and this requires more discussion on the structure of the boundary of the cubic cell. 

\begin{lemma}
    The domain $\mathrm{dom}(s_1)$ intersected with the skeleton is dense in the skeleton.
\end{lemma}
\begin{proof}
    It suffices to show that the intersection of the skeleton with $T=\R\times\Trop(X_2^2+X_3^2=BX_2+CX_3+D)$ is nowhere dense in the skeleton. Observe that the skeleton consists of at most 7 adjacency loci, and the set $T$ consists of (convex) subsets of hyperplanes parallel to the $x_1$-axis. So it suffices to show that the hyperplanes for adjacency loci and those for $T$ intersect transversally.
    
    To see this, it suffices to see that every adjacency locus in the skeleton has the hyperplane with a normal vector $n=(n_1,n_2,n_3)$ with $n_1\neq 0$. It turns out that all adjacency locus except $\partial\cell(X_1X_2X_3\cap AX_1)$ enjoy this property. Nonetheless, as the hyperplane
    \[H=\{(x_1,x_2,x_3) : x_2+x_3=a\}\]
    containing the exceptional adjacency locus 
    intersects hyperplanes of $T$ transversally (hyperplanes of $T$ are normal to $(0,1,-1)$, $(0,1,0)$, $(0,0,1)$, $(0,2,-1)$, or $(0,-1,2)$, none of them parallel to $(0,1,1)$), we see that this exception does not cause a problem.
\end{proof}

By symmetry, we see that the intersection $\bigcap_{i=1}^3\mathrm{dom}(s_i)$ is a dense open subset of the skeleton. For any $\gamma\in\trop(\Gamma)$, its image $\gamma.\bigcap_{i=1}^3\mathrm{dom}(s_i)$ is again dense open in the skeleton. 
The skeleton is a closed subset of $\R^3$, thus a complete metric space. Thus by the Baire category theorem, the intersection $\bigcap_{\gamma\in\trop(\Gamma)}\bigcap_{i=1}^3\gamma.\mathrm{dom}(s_i)$ is a dense subset of the skeleton. This proves Proposition \ref{lem:pre-skeleton}.

\subsection{El'Huti's Isomorphism and Linearlization}

As our Markov surface $S_{ABCD}$ is defined over algebraically closed field, by El'
Huti~\cite[Thm. 1]{ElHuti}, the automorphism group $\Gamma_{ABCD}$ generated by Vieta involutions is isomorphic to the free product of order 2 groups generated by each Vieta involutions. Hence, for any group $G$, one can define a group map $\Gamma_{ABCD}\to G$ by sending each $s_i\in\Gamma_{ABCD}$, $i=1,2,3$, to an order 2 element of $G$.

Among such group maps, the one that we will repetitively introduce is the following.
\begin{proposition}
    \label{lem:PGL2Z-Vieta-group-identification}
    \begin{enumerate}[(a)]
        \item There is a group map $j\colon\Gamma_{ABCD}\to\mathsf{PGL}_2(\Z)$, sending
    \begin{align}
        \label{eqn:PGL2Z-Vieta-group-identification}
        s_1 &\mapsto\begin{bmatrix} -1 & -2 \\ 0 & 1 \end{bmatrix}, &
        s_2 &\mapsto\begin{bmatrix} 1 & 0 \\ -2 & -1 \end{bmatrix}, &
        s_3 &\mapsto\begin{bmatrix} 1 & 0 \\ 0 & -1 \end{bmatrix}.
    \end{align}
        \item The image of $j$ is the kernel of the mod 2 reduction map $\mathsf{PGL}_2(\Z)\to\mathsf{PGL}_2(\Z/2\Z)$.
    \end{enumerate}
\end{proposition}
\begin{proof}
    (a) This is immedaite from El'Huti's result, noting that all images $j(s_i)$'s are set to be involutions in $\mathsf{PGL}_2(\Z)$.
    
    (b) This is standard; see \cite{KoblitzECMF}, \S{III.1}, Exercise~13(c) (see ibid., p.~99 for their $\overline{\Gamma}(2)$).
\end{proof}

\section{The Case of Holomorphic Parameters}
\label{sec:holomorphic}

We say the parameters $A,B,C,D$ of $S_{ABCD}$ are \emph{holomorphic} if their values are nonnegative, i.e., $\min\{a,b,c,d\}\geq 0$, and \emph{meromorphic} otherwise. (The names are from an analogy with the field of Puiseux series.) With holomorphic parameters, all tropical dynamics on the skeleton are isomorphic to the linear action of $\mathsf{PGL}_2(\Z)$ on the plane modulo antipodes, $\R^2/\langle\pm 1\rangle$. This section is devoted to establish this fact.

\subsection{All Holomorphic Skeletons are the Same}

The following fact establishes that all skeletons with holomorphic parameters are the same subsets of $\R^3$, as sketched in Figure~\ref{fig:tetrahedral-cone}.

\begin{figure}
    \centering
    \begin{tikzpicture}[scale=.8,
        x={({(13/16)^(1/2)},0)}, % 13/16 + (.5*cos(30))^2 = 1
        y={(0,{(15/16)^(1/2)})}, % 15/16 + (.5*sin(30))^2 = 1
        z={({-.5*cos(30)},{-.5*sin(30)})}]
        %% base perspective, for the record
            %\coordinate (I) at ({-.5*cos(30)},{-.5*sin(30)});
            %\coordinate (J) at (1,0);
            %\coordinate (K) at (0,1);
        
            % layer 5: coordinate axes
            \draw[gray,->] (-5,0,0) -- (1,0,0) node[right] {$x_1$};
            \draw[gray,->] (0,-5,0) -- (0,1,0) node[above] {$x_2$};
            \draw[gray,->] (0,0,-5) -- (0,0,1) node[left] {$x_3$};

            \foreach \i in {.5}{
            % layer 3: quadratic cells
            \fill[gray,opacity=\i] (0,0,0) -- (-5,-5,0) -- (-5,0,-5) -- cycle;
            \fill[gray,opacity=\i] (0,0,0) -- (-5,-5,0) -- (0,-5,-5) -- cycle;
            \fill[gray,opacity={.5*\i}] (0,0,0) -- (0,-5,-5) -- (-5,0,-5) -- cycle;
            }
            % layer 1: orbits of C(D)
            %\draw (-1,0,-1) -- (-2,-1,-1) -- (-1,-1,0) -- (-1,-2,-1) -- (0,-1,-1); %\draw[dashed] (0,-1,-1) -- (-1,-1,-2) -- (-1,0,-1);
            %\draw (-1,0,-1) -- (-3,-1,-2) -- (-2,-1,-1) -- (-3,-2,-1) -- (-1,-1,0) -- (-2,-3,-1) -- (-1,-2,-1) -- (-1,-3,-2) -- (0,-1,-1); %\draw[dashed] (0,-1,-1) -- (-1,-2,-3) -- (-1,-1,-2) -- (-2,-1,-3) -- (-1,0,-1);
            % layer 0: skeletal lines
            \draw (-5,-5,0) -- (0,0,0) ;%-- (-1,-1,5);
            \draw (-5,0,-5) -- (0,0,0) ;%-- (-1,5,-1);
            \draw (0,-5,-5) -- (0,0,0) ;%-- (5,-1,-1);
        \end{tikzpicture}
    \caption{The skeleton with holomorphic parameters.}
    \label{fig:tetrahedral-cone}
\end{figure}

\begin{proposition}
    \label{lem:holomorphic-same-skeletons}
    The followings are equivalent.
    \begin{enumerate}[(a)]
        \item The parameters are holomorphic.
        \item The origin $(0,0,0)$ is in the skeleton $Sk(a,b,c,d)$.
        \item The skeleton equals to the locus of points $(x_1,x_2,x_3)\in\R^3$ such that
        \begin{equation}
            \label{eqn:image-cone-parametrization}
            x_1+x_2+x_3=\min\{2x_1,2x_2,2x_3\}.
        \end{equation}
    \end{enumerate}
\end{proposition}
\begin{proof}
    ((a)$\Leftrightarrow$(b)) We know that the skeleton is characterized by the equation~\eqref{eqn:skeleton-equation}. So plugging in $(x_1,x_2,x_3)=(0,0,0)$ verifies \eqref{eqn:skeleton-equation} if and only if $0=\min\{0,a,b,c,d\}$ if and only if $0\leq\min\{a,b,c,d\}$.

    ((c)$\Rightarrow$(b)) The set in question contains $(0,0,0)$.

    ((a)$\Rightarrow$(c)) It suffices to show that \eqref{eqn:skeleton-equation} if and only if \eqref{eqn:image-cone-parametrization}, provided that $\min\{a,b,c,d\}\geq 0$.

    Suppose \eqref{eqn:skeleton-equation} holds yet \eqref{eqn:image-cone-parametrization} fails, so that we have
    \[x_1+x_2+x_3<\min\{2x_1,2x_2,2x_3\}.\]
    Then we have $x_1+x_2+x_3<2x_i$, for $i=1,2,3$. They yield $x_3<x_1-x_2$ and $x_3<x_2-x_1$, so we have $x_3<-|x_1-x_2|\leq 0$, etc., so that all coordinates are negative. Hence
    \begin{align*}
        x_1+x_2+x_3 &<x_1\leq a+x_1, & x_1+x_2+x_3 &<x_2\leq b+x_2, \\
        x_1+x_2+x_3 &<x_3\leq c+x_3, & x_1+x_2+x_3 &<0\leq d,
    \end{align*}
    and the strict inequality for \eqref{eqn:skeleton-equation} follows. Contradiction, so \eqref{eqn:image-cone-parametrization} must hold.

    Suppose \eqref{eqn:image-cone-parametrization} holds. Suppose
    \[\min\{2x_1,2x_2,2x_3\}=x_1+x_2+x_3>\min\{a+x_1,b+x_2,c+x_3,d\}.\]
    By a similar argument form the above paragraph, we have $x_1,x_2,x_3\leq 0$. The strict inequality above yields that \emph{one} of the following must hold:
    \begin{align*}
        a &< x_2+x_3\leq 0, & b &< x_1+x_3\leq 0, \\
        c &< x_1+x_2\leq 0, & d &< x_1+x_2+x_3\leq 0.
    \end{align*}
    But then we have $\min\{a,b,c,d\}<0$. Contradiction to (a), and \eqref{eqn:skeleton-equation} is verified.
\end{proof}

\subsection{All Holomorphic Tropical Dynamics are the Same} Next, we establish a
\begin{proposition}
    \label{lem:holomorphic-same-dynamics}
    Suppose $\min\{a,b,c,d\}\geq 0$. Then $\trop(s_i)$, $i=1,2,3$, on $Sk(a,b,c,d)$ appear as
    \begin{align}
        \trop(s_1)(x_1,x_2,x_3) &= \left(\min\{2x_2,2x_3\}-x_1,x_2,x_3\right), \label{eqn:trop-s1-holomorphic} \\
        \trop(s_2)(x_1,x_2,x_3) &= \left(x_1,\min\{2x_1,2x_3\}-x_2,x_3\right), \label{eqn:trop-s2-holomorphic} \\
        \trop(s_3)(x_1,x_2,x_3) &= \left(x_1,x_2,\min\{2x_1,2x_2\}-x_3\right). \label{eqn:trop-s3-holomorphic}
    \end{align}
\end{proposition}
\begin{proof}
    We verify \eqref{eqn:trop-s1-holomorphic}, and argue by symmetry. That is, we verify that $\min\{2x_2,2x_3,b+x_2,c+x_3,d\}=\min\{2x_2,2x_3\}$ on $Sk(a,b,c,d)$, or equivalently,
    \[\min\{2x_2,2x_3\}\leq\min\{b+x_2,c+x_3,d\}.\]
    As $Sk(a,b,c,d)\subset(-\infty,0]^3$ from Proposition~\ref{lem:holomorphic-same-skeletons}, and $b,c,d\geq 0$, we have
    \begin{align*}
        2x_2 &\leq x_2\leq b+x_2, & 2x_3 &\leq x_3\leq c+x_3, &
        2x_2 &\leq 0\leq d.
    \end{align*}
    These suffice to verify that $\min\{2x_2,2x_3\}$ is no greater than $b+x_2$, $c+x_3$, and $d$.
\end{proof}

\subsection{The Cayley Cubic} The choice $(A,B,C,D)=(0,0,0,4)$ of parameters yields a special member in the family of Markov surfaces, called the \emph{Cayley cubic surface} $S_{0004}$,
\[S_{0004}\colon X_1^2+X_2^2+X_3^2+X_1X_2X_3=4.\]
Here, we have $a=b=c=\val(0)=\infty$, and $d=\val(4)\geq 0$; so this is an example of a holomorphic parameter.

It is known that (see, e.g., \cite[\S{2.8}]{Can09}) there is an isomorphism of varieties
\begin{align}
    \PreConeParam\colon\mathbb{G}_m^2/\langle\pm 1\rangle &\to S_{0004}, \label{eqn:pre-cone-parametrization}\\
    (U,V)^{\pm 1} &\mapsto\left(-U-\frac1U,-V-\frac1V,-UV-\frac1{UV}\right), \nonumber
\end{align}
where the action of $\pm 1$ on $\mathbb{G}_m^2$ is given by $\pm1.(U,V)=(U^{\pm1},V^{\pm1})$. 
Consider the monomial action of $\mathsf{GL}_2(\Z)$ on $\mathbb{G}_m^2$:
\[\begin{bmatrix} a & b \\ c & d \end{bmatrix}.(U,V)=(U^aV^b,U^cV^d).\]
This action induces the action of $\mathsf{PGL}_2(\Z)=\mathsf{GL}_2(\Z)/\langle\pm I_2\rangle$ on $\mathbb{G}_m^2/\langle\pm 1\rangle$, and hence the Cayley cubic $S_{0004}$ also has the action of $\mathsf{PGL}_2(\Z)$.

To describe this $\mathsf{PGL}_2(\Z)$-action, we recall the group $\Gamma_{0004}$ generated by Vieta involutions. The surface $S_{0004}$ carries more symmetry, by permuting the coordinates. So we introduce $\sigma_{12}(X_1,X_2,X_3)=(X_2,X_1,X_3)$ and $\sigma_{23}(X_1,X_2,X_3)=(X_1,X_3,X_2)$ that generates all permutations. Note that $s_2=\sigma_{23}s_3\sigma_{23}$ and $s_1=\sigma_{12}s_2\sigma_{12}$ with these transpositions.

\begin{lemma}
    \label{lem:PGL2Z-Aut-Cayley-identification}
    Let $\widetilde{\Gamma}_{0004}$ be the group of algebraic automorphisms of the Cayley cubic generated by Vieta involutions, $\sigma_{12}$, and $\sigma_{23}$.
    \begin{enumerate}[(a)]
        \item There is a group map $\widetilde{\jmath}\colon\widetilde{\Gamma}_{0004}\to\mathsf{PGL}_2(\Z)$, extending $j$ in Proposition~\ref{lem:PGL2Z-Vieta-group-identification} and sending
    \begin{align}
        \label{eqn:PGL2Z-Aut-Cayley-identification}
        s_3 &\mapsto\begin{bmatrix} 1 & 0 \\ 0 & -1 \end{bmatrix}, &
        \sigma_{12} &\mapsto\begin{bmatrix} 0 & 1 \\ 1 & 0 \end{bmatrix}, &
        \sigma_{23} &\mapsto\begin{bmatrix} 1 & 0 \\ -1 & -1 \end{bmatrix},
    \end{align}
    in addition. Then the map $\PreConeParam$ is a $\widetilde{\jmath}$-equivariant map. That is, for any $g\in\widetilde{\Gamma}_{0004}$ and $P\in\mathbb{G}_m^2/\langle\pm 1\rangle$, we have $\PreConeParam(\widetilde{\jmath}(g).P)=g.\PreConeParam(P)$.
        \item The map $\widetilde{\jmath}$ is onto.
    \end{enumerate}
\end{lemma}
\begin{proof}
    (a) Conjugating $\widetilde{\Gamma}_{0004}$ by the isomorphism $\PreConeParam\colon\mathbb{G}_m^2/\langle\pm1\rangle\to S_{0004}$ yields a group map into the group of algebraic automorphisms of $\mathbb{G}_m^2/\langle\pm1\rangle$, namely $\widetilde{\Gamma}_{0004}\to\mathsf{Aut}(\mathbb{G}_m^2/\langle\pm1\rangle)$, $\gamma\in\widetilde{\Gamma}_{0004}\mapsto\PreConeParam^{-1}\gamma\PreConeParam$. As $s_1$ and $s_2$ are generated by $s_3$, $\sigma_{12}$, and $\sigma_{23}$, it suffices to verify that the maps $\PreConeParam^{-1}\gamma\PreConeParam$ are monomial transforms, for $\gamma=s_3,\sigma_{12},\sigma_{23}$. The remainder is a computational business. First,
    \begin{align*}
        s_3(\PreConeParam(U,V))&=s_3\left(-U-\frac1U,-V-\frac1V,-UV-\frac1{UV}\right) \\
        &= \left(-U-\frac1U,-V-\frac1V,-\left(U+\frac1U\right)\left(V+\frac1V\right)+UV+\frac1{UV}\right) \\
        &= \left(-U-\frac1U,-V-\frac1V,-\frac{U}V-\frac{V}U\right)=\PreConeParam(U,V^{-1}),
    \end{align*}
    so the matrix $\widetilde{\jmath}(s_3)$ assigned. For $\sigma_{12}$, it is clear that $\sigma_{12}(\Phi(U,V))=\Phi(V,U)$, so the permutation matrix assigned. For $\sigma_{23}$, we need to make a careful comptuation:
    \begin{align*}
        \sigma_{23}(\PreConeParam(U,V))&=\left(-U-\frac1U,-UV-\frac1{UV},-V-\frac1{V}\right) \\
        &= \left(-U-\frac1U,-\frac1{UV}-UV,-\frac{U}{UV}-\frac{UV}{U}\right) =\PreConeParam(U,U^{-1}V^{-1}),
    \end{align*}
    and this explains the choice of the matrix $\widetilde{\jmath}(\sigma_{23})$.

    (b) The group $\mathsf{PGL}_2(\Z/2\Z)$ is generated by mod 2 images of $\widetilde{\jmath}(\sigma_{12})$ and $\widetilde{\jmath}(\sigma_{23})$. Combine with Proposition~\ref{lem:PGL2Z-Vieta-group-identification}(b).
\end{proof}

To put it short, $\PreConeParam$ is linearlizing the action $\Gamma_{0004}\ACTS S_{0004}$. Such a linearlization descends to the tropical regime as well; see Theorem~\ref{thm:tropical-dynamics-cayley-cubic} below. Unfortunately, we need to establish some terms and facts to properly state it.

First, let $\val^-(X):=\min\{0,\val(X)\}$ be the \emph{(nonpositively) rectified value}.
\begin{lemma}
    \label{lem:ReLU-value}
    Let $U\in K^\times$. Then we have $\val^-(U+U^{-1})=-|\val(U)|$.
\end{lemma}
\begin{proof}
    If $\val(U)\neq 0$, then we have $\val(U+U^{-1})=\min\{\val(U),\val(U^{-1})\}=-|\val(U)|<0$ because $\val(U)\neq\val(U^{-1})$. Otherwise, we have $\val(U+U^{-1})\geq 0$, which still yields $\val^-(U+U^{-1})=0=-|\val(U)|$.
\end{proof}
\begin{lemma}
    \label{lem:l1-linfty-equivalence}
    For any $u,v\in\R$, we have $\max\{2|u|,2|v|\}=|u+v|+|u-v|$, and $\max\{|u+v|,|u-v|\}=|u|+|v|$.
\end{lemma}
\begin{proof}
    One implies the other by an appropriate linear change of basis. So we show the latter. It suffices to verify this for a dense set of $(u,v)\in\R^2$. As $(K,\val)$ is an algebraically closed nontrivially valued field, its value group $\val(K^\times)$ is dense in $\R$. Thus we may assume that $u,v\in\val(K^\times)$, and $u,v\neq 0$. As $uv\neq 0$, it follows that $|u+v|\neq|u-v|$; suppose, without loss of generality, $|u+v|>|u-v|$.
    
    Choose any $U,V\in K^\times$ with $u=\val(U)$ and $v=\val(V)$. Then we have
    \begin{align*}
    	-|u|-|v| &= \val\left(U+\frac1U\right)+\val\left(V+\frac1V\right) \\
    	&= \val\left(\left(U+\frac1U\right)\left(V+\frac1V\right)\right) \\
    	&= \val\left(\left(UV+\frac1{UV}\right)+\left(\frac{U}V+\frac{V}U\right)\right) \\
    	&=\min\{-|u+v|,-|u-v|\},
    \end{align*}
    since $\val(UV+\frac1{UV})=-|u+v|$ is nonzero and is different from $\val(\frac{U}V+\frac{V}U)=-|u-v|$. Multiplying $(-1)$ to this, we get the desired.
\end{proof}

The natural ``tropicalization'' of $\mathbb{G}_m^2/\langle\pm 1\rangle$ is the plane modulo antipodes, $\R^2/\langle\pm 1\rangle$. From there, define the \emph{cone parametrization map}
\begin{align}
    \ConeParam\colon\R^2/\langle\pm 1\rangle &\to \R^3, \label{eqn:cone-parametrization} \\
    \pm(u,v) &\mapsto (-|u|,-|v|,-|u+v|). \nonumber
\end{align}
By Lemma~\ref{lem:ReLU-value}, with $\val((U,V)^{\pm 1})=\pm(\val(U),\val(V))$ and $\val^-$ extended to $K^3\to(-\infty,0]^3$, we have
\begin{equation}
\label{eqn:cone-param-commutativity}
\ConeParam(\val((U,V)^{\pm 1}))=\val^-(\PreConeParam((U,V)^{\pm 1})).
\end{equation}

\begin{lemma}
    \label{lem:cone-parametrization}
    \begin{enumerate}[(a)]
        \item The map $\ConeParam$ is injective.
        \item The image of $\ConeParam$ equals to the set of points $(x_1,x_2,x_3)\in\R^3$ verifying \eqref{eqn:image-cone-parametrization}.
    \end{enumerate}
\end{lemma}
\begin{proof}
    (a) Let $\ConeParam(\pm(u,v))=\ConeParam(\pm(u',v'))$. Then from $|u|=|u'|$ and $|v|=|v'|$, we have $u=\varepsilon_1u'$ and $v=\varepsilon_2v'$ by some $\varepsilon_1,\varepsilon_2\in\{\pm 1\}$. Since $|u+v|=|u'+v'|$ as well, we have $|u+v|=|u'+\varepsilon_1\varepsilon_2v'|=|u'+v'|$. Squaring both sides, we have $\varepsilon_1\varepsilon_2u'v'=u'v'$. If $\varepsilon_1\varepsilon_2=1$, we have $\pm(u,v)=\pm(u',v')$. Otherwise, we have $u'v'=0$ and thus re-choosing $\varepsilon_1$ to $-\varepsilon_1$ or $\varepsilon_2$ to $-\varepsilon_2$ makes no difference, reducing to the previous.

    (b) To see that the image of $\phi$ verifies \eqref{eqn:image-cone-parametrization}, we compute with Lemma~\ref{lem:l1-linfty-equivalence}:
    \begin{align*}
        -|u|-|v|-|u+v| &= \min\{-|u+v|,-|u-v|\}-|u+v| \\
        &= \min\{-2|u+v|,-|u+v|-|u-v|\} \\
        &= \min\{-2|u+v|,\min\{-2|u|,-2|v|\}\} \\
        &= \min\{-2|u|,-2|v|,-2|u+v|\}.
    \end{align*}
    
    To show the converse, note that condition \eqref{eqn:image-cone-parametrization} is claiming that the following three inequalities hold,
    \begin{align*}
        x_1+x_2+x_3 &\leq 2x_1, & x_1+x_2+x_3 &\leq 2x_2, & x_1+x_2+x_3 &\leq 2x_3,
    \end{align*}
    and at least one of them is an equality. Here, the first two yields $x_3\leq x_1-x_2$ and $x_3\leq x_2-x_1$, thus it is combined to $x_3\leq -|x_1-x_2|$. (This implies $x_3\leq 0$, and $x_1,x_2\leq 0$ are argued symmetrically.) The last yields $x_3\geq x_1+x_2$. But as at least one of inequalities must be an equality, we have two cases: $x_3=-|x_1-x_2|$ or $x_3=x_1+x_2$.
    
    Note that $x_3\leq 0$ in any case, and by symmetry, we have $x_1,x_2\leq 0$ too. If $x_3=-|x_1-x_2|$, then we have $\Phi(\pm(x_1,-x_2))=(x_1,x_2,x_3)$. If $x_3=x_1+x_2$, then we have $\Phi(\pm(x_1,x_2))=(x_1,x_2,x_3)$.
\end{proof}

\begin{lemma}
\label{lem:val--image-skeleton}
    Let $Sk=Sk(\infty,\infty,\infty,\val(4))\subset\Trop(S_{0004})$ be the skeleton of the Cayley cubic. The nonpositively rectified value defines a map $\val^-\colon S_{0004}(K)\to Sk$, $(X_1,X_2,X_3)\mapsto(\val^-(X_1),\val^-(X_2),\val^-(X_3))$.
\end{lemma}
\begin{proof}
    We know that $\ConeParam\circ\val=\val^-\circ \PreConeParam$~\eqref{eqn:cone-param-commutativity}, so it remains to see that the image of $\ConeParam$ equals to $Sk$, which was established in Lemma~\ref{lem:cone-parametrization}(b).
\end{proof}

\begin{lemma}
    \label{lem:cone-parametrization-is-equivariant}
    Let $\trop(s_i)$ be set via formulae~\eqref{eqn:trop-s1-holomorphic}--\eqref{eqn:trop-s3-holomorphic}. Let $j$ be the group map defined in Proposition~\ref{lem:PGL2Z-Vieta-group-identification}. For any $\vec{u}\in\R^2/\langle\pm1\rangle$ and $i=1,2,3$, we have
    \[\trop(s_i)(\ConeParam(\vec{u}))=\ConeParam(j(s_i).\vec{u}).\]
\end{lemma}
\begin{proof}
    Fix $(A,B,C,D)=(0,0,0,4)$ as a holomorphic parameter. Via Lemma~\ref{lem:PGL2Z-Aut-Cayley-identification}, recall the group $\widetilde{\Gamma}_{0004}=\langle\Gamma_{0004},\sigma_{12},\sigma_{23}\rangle$ extending $\Gamma_{0004}=\langle s_1,s_2,s_3\rangle$ by permutations, and that the group is generated by $s_3$, $\sigma_{12}$, and $\sigma_{23}$. If we choose the tropicalizations $\trop(\sigma_{12})$ and $\trop(\sigma_{23})$ of permutations as the same permutations, then we have
    \begin{align*}
    	\trop(s_2)&=\trop(\sigma_{23})\circ\trop(s_3)\circ\trop(\sigma_{23}), \\
    	\trop(s_1)&=\trop(\sigma_{12})\circ\trop(s_2)\circ\trop(\sigma_{12}),
    \end{align*}
    on $\R^3$, analogous to $s_2=\sigma_{23}s_3\sigma_{23}$ and $s_1=\sigma_{12}s_2\sigma_{12}$ on $S_{0004}$. So it suffices to check that $\ConeParam\circ\widetilde{\jmath}(\gamma)=\trop(\gamma)\circ\ConeParam$ for $\gamma=s_3,\sigma_{12},\sigma_{23}$.
    
    Let $\pm(u,v)\in\R^2/\langle\pm1\rangle$ be any. For the case $\gamma=s_3$, we compute
    \begin{align*}
    	\trop(s_3)(\ConeParam(\pm(u,v))) &= \trop(s_3)(-|u|,-|v|,-|u+v|)\\
    	&= (-|u|,-|v|,\min\{-2|u|,-2|v|\}+|u+v|) \\
    	&= (-|u|,-|v|,-|u-v|)\quad\text{(by Lemma~\ref{lem:l1-linfty-equivalence})} \\
    	&= \ConeParam(u,-v)=\ConeParam(\widetilde{\jmath}(s_3).(u,v)).
    \end{align*}
    For the case $\gamma=\sigma_{23}$, we compute
    \begin{align*}
    	\trop(\sigma_{23})(\ConeParam(\pm(u,v))) &= \trop(\sigma_{23})(-|u|,-|v|,-|u+v|) \\
    	&= (-|u|,-|u+v|,-|v|)=(-|u|,-|-u-v|,-|u+(-u-v)|) \\
    	&= \ConeParam(u,-u-v)=\ConeParam(\widetilde{\jmath}(\sigma_{23}).(u,v)).
    \end{align*}
    The case $\gamma=\sigma_{12}$ is clear. These show the claim.
\end{proof}

\begin{theorem}[Tropical dynamics for Cayley cubic]
    \label{thm:tropical-dynamics-cayley-cubic}
    Let $Sk=Sk(\infty,\infty,\infty,\val(4))$ be the skeleton in $\Trop(S_{0004})$. Let $\val^-(X)=\min\{0,\val(X)\}$ be the rectified value, and let $\PreConeParam$ and $\ConeParam$ be respectively from \eqref{eqn:pre-cone-parametrization} and \eqref{eqn:cone-parametrization}. Then we have the following commutative diagram:
    \begin{equation}
        \label{eqn:tropical-dynamics-cayley-cubic}
    \begin{tikzcd}
        (K^\times)^2/\langle\pm1\rangle \arrow[r,"\PreConeParam"]\arrow[d,"\val"] & S_{0004}(K) \arrow[d,"\val^-"] \\
        \R^2/\langle\pm 1\rangle \arrow[r,"\ConeParam"] & Sk.
    \end{tikzcd}
    \end{equation}
    Moreover, vertical arrows are equivariant in the sense that
    \begin{enumerate}[(i)]
        \item for any $A\in\mathsf{PGL}_2(\Z)$, we have $\val(A.(U,V)^{\pm 1})=A.\pm(\val(U),\val(V))$, where the left side is the monomial action and the right side is the linear (column-vector) action; and
        \item whenever $X\in S_{0004}(K)$ and $i=1,2,3$, we have $\val^-(s_i.X)=\trop(s_i)(\val^-(X))$.
    \end{enumerate}
\end{theorem}
\begin{proof}
    All are established except the equivariance (ii). As both $\PreConeParam$ and $\ConeParam$ are bijections, this follows from Lemma~\ref{lem:cone-parametrization-is-equivariant}.
\end{proof}

\subsection{The General Case} We finalize the section by the following ``master theorem'' of tropical dynamics of Markov surfaces with holomorphic parameters. 

\begin{theorem}[Tropical dynamics for holomorphic parameters]
\label{thm:tropical-dynamics-holomorphic-parameters}
    Let $A,B,C,D\in K$ be parameters with nonnegative values $a,b,c,d$ resp., and let $Sk=Sk(a,b,c,d)$ be the corresponding skeleton in $\Trop(S_{ABCD})$. Let $\val^-(X)=\min\{0,\val(X)\}$ be the rectified value, inducing a map $\val^-\colon S_{ABCD}(K)\to Sk$, $\val^-((X_i)_{i=1}^3)=(\val^-(X_i))_{i=1}^3$. Let $\phi\colon\R^2/\langle\pm1\rangle\xrightarrow{\sim}Sk$ be as in \eqref{eqn:cone-parametrization}. Let $j\colon\Gamma_{ABCD}\to\mathsf{PGL}_2(\Z)$ be the group map \eqref{eqn:PGL2Z-Vieta-group-identification}.
    
    The map $\ConeParam^{-1}\circ\val^-\colon S_{ABCD}(K)\to\R^2/\langle\pm1\rangle$ is equivariant in the sense that, for any $X\in S_{ABCD}(K)$ and $\gamma\in\Gamma_{ABCD}$, we have $\ConeParam^{-1}\circ\val^-(\gamma.X)=j(\gamma).\ConeParam^{-1}\circ\val^-(X)$.
\end{theorem}

By Lemma~\ref{lem:cone-parametrization-is-equivariant}, we have $j(g)\circ\ConeParam^{-1}=\ConeParam^{-1}\circ\trop(g)$, for $g=s_1,s_2,s_3$, even on general Markov surface with holomorphic parameters. Thus our theorem reduces to the following

\begin{lemma}[Tropical equivariance for holomoprhic parameters]
\label{lem:tropical-dynamics-holomorphic-parameters}
    We keep the notations from Theorem~\ref{thm:tropical-dynamics-holomorphic-parameters}. The map $\val^-\colon S_{ABCD}(K)\to Sk$ is equivariant in the sense that, for any $X\in S_{ABCD}(K)$ and $i=1,2,3$, we have
    \[\val^-(s_i.X)=\trop(s_i)(\val^-(X)).\]
\end{lemma}
\begin{proof}
   Let $X=(X_1,X_2,X_3)$ denote the coordinates of $X\in S_{ABCD}(K)$ and $x_i=\val(X_i)$ denote the value of each. Then we have $(x_1,x_2,x_3)\in\Trop(S_{ABCD})$. Be aware that we are using rectified values $\val^-(X_i)$ as well below.
   
   We split into the following cases.
   \begin{enumerate}
   \item\label{enum:nonnegative-valued} When $\val^-(X)=(0,0,0)$.
   \item\label{enum:fin-point} When $\val^-(X)\neq(0,0,0)$ yet one of $\val^-(X_j)$'s is zero.
   \item\label{enum:true-skeleton} When all $\val^-(X_j)$'s is negative.
   \end{enumerate}
   
   To deal with case (\ref{enum:nonnegative-valued}), let $\mathcal{O}=\{x\in K : \val(x)\geq 0\}$ be the ring of valuations. Then we have $A,B,C,D\in\mathcal{O}$. In particular, the Vieta involutions are polynomial maps over coefficients in $\mathcal{O}$, and hence $\Gamma_{ABCD}$ leaves $S_{ABCD}(\mathcal{O})$ invariant.
   
   If $\val^-(X)=(0,0,0)$, then $x_j\geq 0$ for $j=1,2,3$. Then $X\in S_{ABCD}(\mathcal{O})$, so for any Vieta involution $s_i$, we have $\val^-(s_i.X)=(0,0,0)=\trop(s_i)(0,0,0)=\trop(s_i)(\val^-(X))$.
   
   To deal with case (\ref{enum:fin-point}), we may assume that $\val^-(X_1)=0$ and $\val^-(X_2)<0$. We first claim that $x_3=x_2$. Assume $x_2\neq x_3$. Since $x_1\geq 0>x_2$, we know that, in the tropical polynomial $f_{abcd}$~\eqref{eqn:trop-polynomial}, the possible tropical monomials attaining the minimum are $2x_2$, $2x_3$, $x_1+x_2+x_3$, and $c+x_3$. (The excluded monomials are greater than $2x_2$.) If $x_3<x_2$ then we have $2x_3<2x_2$, $2x_3<x_2+x_3\leq x_1+x_2+x_3$, and $2x_3<c+x_3$, so the point is in the cell $\cell(X_3^2)$, contradicting to $(x_1,x_2,x_3)\in\Trop(S_{ABCD})$. If $x_3>x_2$, then likewise we have $2x_2<2x_3$, $2x_2<x_1+x_2+x_3$, and $2x_2<c+x_3$, so the point is in the cell $\cell(X_2^2)$, again a contradiction. Thus $x_3=x_2$ must hold.
   
   The rest is computation from $x_1\geq 0>x_2=x_3$. We have $\val^-(X)=(0,x_2,x_3)$ and
   \begin{align*}
       \min\{2x_2,2x_3,b+x_2,c+x_3,d\} &= 2x_2=x_2+x_3, \\
       \min\{2\cdot 0,2x_3,a+0,c+x_3,d\} &= 2x_3, \\
       \min\{2\cdot 0,2x_2,a+0,b+x_2,d\} &= 2x_2,
   \end{align*}
   which verify
   \begin{align*}
       \trop(s_1)(0,x_2,x_3) &= (x_2+x_3,x_2,x_3), \\
       \trop(s_2)(0,x_2,x_3) &= (0,2x_3-x_2,x_3)=(0,x_2,x_3), \\
       \trop(s_3)(0,x_2,x_3) &= (0,x_2,2x_2-x_3)=(0,x_2,x_3).
   \end{align*}
   Meanwhile, the followings have unique minimal monomial and evaluate
   \begin{align*}
       \val(-X_2X_3+A-X_1) &= \val(X_2X_3)=x_2+x_3<0, \\
       \val(-X_1X_3+B-X_2) &= \val(X_2)=x_2<0, \\
       \val(-X_1X_2+C-X_3) &= \val(X_3)=x_3<0,
   \end{align*}
   and we see that $\trop(s_i)(\val^-(X))=\val^-(s_i.X)$, for $i=1,2,3$, in this case.
   
   Finally, for case (\ref{enum:true-skeleton}), we have $x_j<0$ for $j=1,2,3$. Then we have $(x_1,x_2,x_3)=\val^-(X)\in Sk$. As the assumptions are all symmetric in this case, it suffices to let $i=1$ and show that $\val^-(s_1.X)=\trop(s_1)(x_1,x_2,x_3)$. Let $s_1.X=(\widetilde{X}_1,X_2,X_3)$.
   
   Suppose that $\val^-(\widetilde{X}_1)=0$. Then as discussed above, we must have $x_2=x_3<0$. Moreover, we have $\val(X_1)=\val(-X_2X_3+A-\widetilde{X}_1)=\val(X_2X_3)$ in that case, so $x_1=x_2+x_3$. As $(x_1,x_2,x_3)\in Sk$, it follows that
   \begin{align*}
       \trop(s_1)(\val^-(X)) &= \trop(s_1)(x_1,x_2,x_3) \\
       &= \left(\min\{2x_2,2x_3\}-x_1,x_2,x_3\right) \\
       &= \left(2x_2-(x_2+x_3),x_2,x_3)\right)=(0,x_2,x_3)=\val^-(s_1.X).
   \end{align*}
   
   Suppose that $\val^-(\widetilde{X}_1)<0$. If $x_2=x_3(<0)$, then as $(x_1,x_2,x_3)\in Sk$ verify the equation~\eqref{eqn:image-cone-parametrization}, it follows that $x_1=2x_2$. Meanwhile, as
   \begin{align*}
       \val(X_1\widetilde{X}_1)=\val(X_2^2+X_3^2-BX_2-CX_3-D) &\geq \val(X_2^2)=2x_2,
   \end{align*}
   we have $\val(\widetilde{X}_1)\geq 2x_2-x_1=0$. This contradicts to the assumption. So $x_2\neq x_3$, and $\min\{2x_2,2x_3,b+x_2,c+x_3,d\}$ has a unique tropical monomial attaining the minimum, $2x_2$ or $2x_3$. Hence $(x_1,x_2,x_3)$ is in the tropical domain of $s_1$, and we have
   \[\val^-(s_1.X)=\val(s_1.X)=\trop(s_1)(\val(X))=\trop(s_1)(\val^-(X)),\]
   as demanded.
\end{proof}

%\begin{remark} Although the tropicalization do not seem to yield anything interesting about the dynamics of $\Gamma_{ABCD}\ACTS S_{ABCD}(\mathcal{O})$, the subject itself is interesting in its own right. See, e.g., \cite{CantatJang2024}. \end{remark}

\section{The Topology of the Skeleton}
\label{sec:skeleton}

To address the tropical dynamics on the skeleton in the meromorphic case, we first show that the topological structure of the skeletons and Vieta involutions are (piecewise-linear) homeomorphic from one another. As first steps, in this section we prove that
\begin{enumerate}
	\item (Proposition~\ref{lem:trop-foliation}) the skeleton $Sk(a,b,c,d)$ is piecewise-linear homeomorphic to the plane $\R^2$, and
	\item (Proposition~\ref{lem:fixed-set-transversality}) the fixed loci of $\trop(s_i)$'s \emph{in $\R^3$} are transversal to the skeleton, and on the skeleton, each fixed locus splits the skeleton into two pieces that are swapped by $\trop(s_i)$.
\end{enumerate}

\subsection{The Equation for the Skeleton}

Recall that we have characterized the skeleton by an equation~\eqref{eqn:skeleton-equation}. This equation may be seen as the zero level set of the following function
\begin{equation}
\label{eqn:skeletal-trop-laurent-poly}
    f_{Sk,abcd}(x_1,x_2,x_3)=\min\left\{\begin{array}{l}2x_1,2x_2,2x_3, \\ a+x_1,b+x_2,c+x_3,d \end{array}\right\}-(x_1+x_2+x_3).
\end{equation}
We name $f_{Sk,abcd}$ the \emph{skeletal tropical Laurent polynomial}. When $a,b,c,d$ are complicated, we may distinguish them as $f_{Sk;a,b,c,d}$.

\subsubsection{Invariance}

We note that this tropical Laurent polynomial is invariant under the tropicalized group $\trop(\Gamma_{ABCD})$.

First observation is this. If $w$ is in the value group $\val(K^\times)$, then the level set $f_{Sk,abcd}=w$ is a shift of some other skeleton, in the following sense.
\begin{lemma}
\label{lem:skeleton-shift}
	Fix $t^w\in K^\times$ with $\val(t^w)=w$. Consider the surface
	\[S^w_{ABCD}\colon X_1^2+X_2^2+X_3^2+t^wX_1X_2X_3=AX_1+BX_2+CX_3+D.\]
	Then the map $\Phi\colon S^w_{ABCD}\to\Aa^3$, $(X_1,X_2,X_3)\mapsto(t^{w}X_1,t^{w}X_2,t^{w}X_3)$, has the image $S_{A'B'C'D'}$ with $(A',B',C',D')=(t^wA,t^wB,t^wC,t^{2w}D)$. Moreover, we send $\{f_{Sk,abcd}=w\}$ to $Sk(a+w,b+w,c+w,d+2w)$ by a translation $\phi(x)=x+(w,w,w)$.
\end{lemma}
\begin{proof}
	The first claim is a matter of computation. For the second, note that
	\[f_{Sk,abcd}(\phi(x))=f_{Sk,abcd}(x+(w,w,w))=f_{Sk;a+w,b+w,c+w,d+2w}(x)-w.\]
	Hence $Sk(a+w,b+w,c+w,d+2w)=\{f_{Sk,a+w,b+w,c+w,d+2w}=0\}=\{f_{Sk,abcd}(\phi(x))=w\}=\phi^{-1}(\{f_{Sk,abcd}=0\})$ shows the claim.%, \emph{independent to whether $w$ is in $\val(K^\times)$ or not}.
\end{proof}

The shifting map conjugates the tropicalized Vieta involutions as well:

\begin{lemma}
\label{lem:shift-conjugation}
	Consider the following tropicalized Vieta involutions, on $S_{ABCD}$ and $S_{A'B'C'D'}$ with $(A',B',C',D')=(t^wA,t^wB,t^wC,t^{2w}D)$:
	\begin{align*}
		\trop(s_1)(x) &= \left(\min\{2x_2,2x_3,b+x_2,c+x_3,d\}-x_1,x_2,x_3\right),\quad\text{ etc.,} \\
		\trop(s_1')(x) &= \left(\min\{2x_2,2x_3,b+w+x_2,c+w+x_3,d+2w\}-x_1,x_2,x_3\right),\quad\text{ etc.}
	\end{align*}
	Let $\phi(x)=x+(w,w,w)$ be a translation. Then we have the followings, for any $x\in\R^3$: $\trop(s_1')(\phi(x))=\phi(\trop(s_1)(x))$, etc.
\end{lemma}
\begin{proof} Computation. \end{proof}

\begin{corollary}
	The function $f_{Sk,abcd}$ is invariant under $\trop(\Gamma_{ABCD})$.
\end{corollary}
\begin{proof}
	Note that, as $K$ is algebraically closed, the value group $\val(K^\times)$ is a $\Q$-vector space and thus dense. Hence it suffices to show that, for $w\in\val(K^\times)$, the level set $\{f_{Sk,abcd}=w\}$ is $\trop(\Gamma_{ABCD})$-invariant, as $f_{Sk,abcd}$ is continuous on $\R^3$.

	By the shift map $\phi\colon\{f_{Sk,abcd}=w\}\to Sk(a+w,b+w,c+w,d+2w)$, $\phi(x)=x+(w,w,w)$, we can evaluate, for $x\in\{f_{Sk,abcd}=w\}$, the tropicalized Vieta involution $\trop(s_i)(x)$ as follows (here, $\trop(s_i')$ is as in Lemma~\ref{lem:shift-conjugation}):
	\begin{align*}
		\trop(s_i)(x) &= \phi^{-1}\phi\left(\trop(s_i)(x)\right) \\
		&= \phi^{-1}\trop(s_i')(\phi(x)) \\
		&\in\phi^{-1}(Sk(a+w,b+w,c+w,d+2w))=\{f_{Sk,abcd}=w\}.
	\end{align*}
	This shows that the level set is invariant.
\end{proof}

It is known that, for nontrivially valued fields, one can extend $(K,\val)$ to have the value group $\R$ (see \cite[Sec.~5, Thm.~2]{Poonen93}, with $G=\R$). So the constraint $w\in\val(K^\times)$ can be seen to be vacuous in discussions above. In particular, any discussions on the dynamics of the level set $\{f_{Sk,abcd}=w\}$ can be reduced to that on the skeleton $Sk(a+w,b+w,c+w,d+2w)$, regardless to whether $w$ is in the value group or not.

\subsubsection{Inequality formulation}

We note the following additional fact for the values of $f_{Sk,abcd}$, as a computational gadget. The proof is elementary and thus omitted.
\begin{lemma}
\label{lem:inequality-formulation-tropical-skeletal-Laurent}
    We have $f_{Sk,abcd}(x_1,x_2,x_3)=w$ if and only if the following inequalities hold,
    \begin{align*}
        w+x_1+x_2+x_3 &\leq 2x_1, & w+x_1+x_2+x_3 &\leq a+x_1, \\
        w+x_1+x_2+x_3 &\leq 2x_2, & w+x_1+x_2+x_3 &\leq b+x_2, \\
        w+x_1+x_2+x_3 &\leq 2x_3, & w+x_1+x_2+x_3 &\leq c+x_3, \\
        && w+x_1+x_2+x_3 &\leq d.
    \end{align*}
    with at least one holding as an equality.
\end{lemma}

\subsection{Topology of Level Sets}

The skeletal tropical Laurent polynomial $f_{Sk,abcd}$ discussed above has planar level sets, in the following sense.

\begin{proposition}
    \label{lem:trop-foliation}
    Let $\Pi=\{(x_1,x_2,x_3)\in\R^3 : x_1+x_2+x_3=0\}$ be a plane in $\R^3$, and $v\colon\R^3\to\Pi$ be the orthogonal projection onto that plane, i.e.,
    \[v=(v_1,v_2,v_3)=\left(\frac{2x_1-x_2-x_3}{3},\frac{-x_1+2x_2-x_3}{3},\frac{-x_1-x_2+2x_3}{3}\right).\]
    Then the product map $f_{Sk,abcd}\times v\colon\R^3\to\R\times\Pi$ is a piecewise-linear homeomorphism.
\end{proposition}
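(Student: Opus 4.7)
The plan is to exploit the fact that $v$ projects along the direction $(1,1,1)$, which is transverse to the ``gradient'' of $f_0$ in a very strong sense: every tropical monomial entering $f_0$ has slope at most $2$ along $(1,1,1)$, while the subtracted term $x_1+x_2+x_3$ has slope $3$. So I expect $f_0$ to be strictly decreasing along each fiber of $v$, with range all of $\R$, giving bijectivity of $f_0\times v$ fiber by fiber.

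First I would observe that for $p\in\Pi$, the fiber $v^{-1}(p)$ is exactly the line $\{p+t(1,1,1):t\in\R\}$, since $v$ is the orthogonal projection along the direction $(1,1,1)\perp\Pi$. Parametrize the fiber by $t$. On this line, each of the seven tropical monomials $2x_i$, $a+x_i$, $b+x_2$, $c+x_3$, $d$ is an affine function of $t$ with slope $2$, $1$, or $0$, hence so is their minimum. Subtracting $(x_1+x_2+x_3)=3t$ (recall $p\in\Pi$ so $p_1+p_2+p_3=0$), the function $t\mapsto f_0(p+t(1,1,1))$ is piecewise linear with every slope in $\{-3,-2,-1\}$. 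This gives strict monotonic decrease on every fiber.

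Next I would compute limits along the fiber to show the range is all of $\R$. As $t\to+\infty$, every non-constant monomial tends to $+\infty$, so eventually the minimum equals the constant term $d$; hence $f_0(p+t(1,1,1))=d-3t\to-\infty$. As $t\to-\infty$, the minimum is asymptotically dominated by the slope-$2$ terms $2(p_i+t)$, so $f_0\sim 2t-3t=-t\to+\infty$. (If $d=\infty$, i.e.\ the $d$-term is absent, the $+\infty$ limit is achieved even more directly.) Combining with continuity and strict monotonicity, $f_0$ restricts to a bijection $v^{-1}(p)\to\R$ for every $p\in\Pi$, so $f_0\times v$ is a continuous bijection $\R^3\to\R\times\Pi$.

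Finally I would upgrade bijectivity to piecewise-linear homeomorphism. Since $f_0$ and $v$ are both PL, so is $f_0\times v$; and an explicit PL inverse is given on each fiber by inverting the monotonic PL bijection $t\mapsto f_0(p+t(1,1,1))$, whose pieces depend PL-linearly on $p$. Alternatively, one invokes invariance of domain: a continuous bijection between two copies of $\R^3$ is automatically a homeomorphism. I do not expect a real obstacle here; the only thing to be careful about is confirming that the slope bound strictly separates the min-terms (slopes $\leq 2$) from the subtracted term (slope $3$) \emph{uniformly}, which is why dropping the monomial $x_1+x_2+x_3$ in passing from $f$ to $f_0$ in \eqref{eqn:skeletal-level-function} is crucial.
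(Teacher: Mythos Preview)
Your argument is correct and takes a genuinely different route from the paper's. The paper proceeds by explicitly constructing the inverse: writing $x_i=\alpha+v_i$ with $\alpha=\tfrac13(x_1+x_2+x_3)$, it substitutes into the defining inequalities for $\{f_0=w\}$, solves them for $\alpha$, and obtains the closed formula
\[
\alpha=\min\!\Bigl(2v_1-w,\,2v_2-w,\,2v_3-w,\,\tfrac12(a+v_1-w),\,\tfrac12(b+v_2-w),\,\tfrac12(c+v_3-w),\,\tfrac13(d-w)\Bigr),
\]
from which the piecewise-linear inverse $(w,v)\mapsto x$ is read off directly. Your approach instead observes that on each fiber $t\mapsto f_0(p+t(1,1,1))$ every slope lies in $\{-1,-2,-3\}$, hence the restriction is a strictly decreasing PL bijection $\R\to\R$, and then appeals to invariance of domain. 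The paper's route yields the explicit inverse (making the PL property immediate and transparent, though the formula is not reused later); yours is cleaner conceptually and would carry over unchanged to any tropical polynomial exhibiting the same slope gap between the dropped monomial and the remaining ones. One small remark: your parenthetical handling of the case $d=\infty$ is slightly garbled, but in fact once you have established that all slopes lie in $[-3,-1]$ you need no separate asymptotic analysis at all---a continuous PL function $\R\to\R$ with all slopes bounded away from zero is automatically a bijection onto $\R$.
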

\begin{proof}
    Let $\alpha=\frac13(x_1+x_2+x_3)$. Then the map $\alpha\times v\colon\R^3\to\R\times\Pi$ is a linear isomorphism, with the inverse $x=(x_1,x_2,x_3)=(\alpha+v_1,\alpha+v_2,\alpha+v_3)$. Let $f_0=f_{Sk,abcd}$.

    We describe the inverse $(f_0\times v)^{-1}$ by describing $(\alpha\times v)\circ(f_0\times v)^{-1}$ instead. That is, we suggest on how to recover $\alpha$ from $w\in\R$ and $v\in\Pi$ given. Apply Lemma~\ref{lem:inequality-formulation-tropical-skeletal-Laurent} with $(x_1,x_2,x_3)=(\alpha+v_1,\alpha+v_2,\alpha+v_3)$ and get
    \begin{align*}
        w + 3\alpha &\leq 2\alpha + 2v_1, & w + 3\alpha &\leq a+\alpha + v_1, \\
        w + 3\alpha &\leq 2\alpha + 2v_2, & w + 3\alpha &\leq b+\alpha + v_2, \\
        w + 3\alpha &\leq 2\alpha + 2v_3, & w + 3\alpha &\leq c+\alpha + v_3, \\
        & & w + 3\alpha &\leq d,
    \end{align*}
    with at least one of them holding as an equality. 
    Solving these inequalities in $\alpha$, we obtain
    \begin{align*}
        \alpha &\leq 2v_1 - w, & \alpha &\leq \frac12(a + v_1 - w), \\
        \alpha &\leq 2v_2 - w, & \alpha &\leq \frac12(b + v_2 - w), \\
        \alpha &\leq 2v_3 - w, & \alpha &\leq \frac12(c + v_3 - w), \\
        && \alpha &\leq \frac13(d - w),
    \end{align*}
    with at least one of them holding as an equality. This is equivalent to
    \begin{equation}
        \label{eqn:average-coordinate-formula}
        \alpha = \min\left\{\begin{array}{l} 2v_1-w, 2v_2-w, 2v_3-w, \\ \frac12(a+v_1-w),\frac12(b+v_2-w),\frac12(c+v_3-w), \\ \frac13(d-w) \end{array}\right\},
    \end{equation}
    so we can find the inverse map explicitly.
\end{proof}

\begin{corollary}
    \label{lem:skeleton-plane-identification}
    The orthogonal projection map $v\colon\R^3\to\Pi$ restricted to a skeleton $v\restriction Sk(a,b,c,d)\to\Pi$ is a piecewise-linear homeomorphism.
\end{corollary}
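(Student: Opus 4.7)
The plan is to deduce this immediately from Proposition \ref{lem:trop-foliation}. Since $Sk(a,b,c,d)$ is by Definition \ref{def:skeleton} the zero level set $\{f_0=0\}$, it coincides with the preimage $(f_0\times v)^{-1}(\{0\}\times\Pi)$ under the map established to be a piecewise-linear homeomorphism in Proposition \ref{lem:trop-foliation}. Restricting a PL homeomorphism to the preimage of a linear subspace yields a PL homeomorphism onto that subspace, so $f_0\times v$ restricts to a PL homeomorphism $Sk(a,b,c,d)\to\{0\}\times\Pi$. Composing with the obvious linear isomorphism $\{0\}\times\Pi\to\Pi$, the composite is exactly $v|Sk(a,b,c,d)$, which is therefore a PL homeomorphism.

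To make the inverse map concrete, one specializes the recipe in the proof of Proposition \ref{lem:trop-foliation} to $w=0$: for each $v=(v_1,v_2,v_3)\in\Pi$, set
\[\alpha(v) = \min\left(2v_1,2v_2,2v_3,\tfrac12(a+v_1),\tfrac12(b+v_2),\tfrac12(c+v_3),\tfrac13 d\right),\]
obtained by specializing \eqref{eqn:average-coordinate-formula} to $w=0$, and then the inverse to $v|Sk(a,b,c,d)$ sends $v$ to $(\alpha(v)+v_1,\alpha(v)+v_2,\alpha(v)+v_3)$. Since $\alpha$ is a minimum of finitely many affine functions in $v$, the inverse is patently piecewise linear and continuous. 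There is no real obstacle here: everything has been done in Proposition \ref{lem:trop-foliation}, and the corollary is just the observation that slicing the foliation by the level $w=0$ is the projection being claimed.
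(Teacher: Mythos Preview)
Your proof is correct and is exactly the argument the paper intends: the corollary is stated immediately after Proposition \ref{lem:trop-foliation} with no separate proof, so the paper's own reasoning is just the implicit observation that $Sk(a,b,c,d)=\{f_0=0\}$ is the $w=0$ slice of the foliation. You have spelled out precisely this, including the explicit inverse obtained by specializing \eqref{eqn:average-coordinate-formula} to $w=0$.
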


\subsection{Transversality of Fixed Sets}

By the definitions \eqref{eqn:trop-vieta-1} to \eqref{eqn:trop-vieta-3} as a map $\R^3\to\R^3$, the fixed sets of $\trop(s_1),\trop(s_2),\trop(s_3)$ appear as
\begin{align*}
    \Fix(\trop(s_1)) &\colon 2x_1 = \min\{2x_2,2x_3,b+x_2,c+x_3,d\}, \\
    \Fix(\trop(s_2)) &\colon 2x_2 = \min\{2x_1,2x_3,a+x_1,c+x_3,d\}, \\
    \Fix(\trop(s_3)) &\colon 2x_3 = \min\{2x_1,2x_2,a+x_1,b+x_2,d\}.
\end{align*}
Hence these fixed sets appear as graphs of some piecewise-linear functions $\R^2\to\R$, thus piecewise-linear homeomorphic to $\R^2$.

It turns out that the fixed sets are `transverse' to each level set $\{f_{Sk,abcd}=w\}$ and intersects on a topological line, as claimed in the Proposition below. Recall the foliation $f_{Sk,abcd}\times v\colon\R^3\to\R\times\Pi$ in Proposition \ref{lem:trop-foliation} above. 

\begin{proposition}
    \label{lem:fixed-set-transversality}
    There is a coordinate $(w;v_1,u_1)$ of $\R\times\Pi$ and a piecewise-linear continuous function $G\colon\R^2\to\R$ with following properties.
    \begin{enumerate}[(a)]
    \item The fixed set $\Fix(\trop(s_1))\subset\R^3$ appears as the graph $v_1=G(w,u_1)$.
    \item The map $\trop(s_1)$ sends $(w;v_1,u_1)\mapsto(w;2G(w,u_1)-v_1,u_1)$, and hence the fixed locus of $\trop(s_1)$ splits the plane $w=\mathsf{const}$ into two pieces that are swapped by the map.
    \end{enumerate}
\end{proposition}
\begin{proof}
    Specifically, we set $w=f_{Sk,abcd}(x_1,x_2,x_3)$, $v_1=\frac13(2x_1-x_2-x_3)$, and $u_1=v_2-v_3=x_2-x_3$. Then $(w;v_1,u_1)$ forms a coordinate of $\R\times\Pi$. Observe that $w$ and $u_1$ are invariant under $\trop(s_1)$. So $\trop(s_1)$ acts as an involution on $v_1$, and by the intermediate value theorem, there is a unique fixed point $v_1=G(w,u_1)$. Note that $\trop(s_1)$ sends $v_1$ to
    \[v_1\mapsto\frac23(f_1(x_2,x_3)-x_2-x_3)-v_1,\]
    where $f_1(x_2,x_3)=\min\{2x_2,2x_3,b+x_2,c+x_3,d\}$.
    
    So it remains to express $\frac13(f_1(x_2,x_3)-x_2-x_3)$ as a function $G(w,u_1)$, or equivalently, find the coordinate $v_1$ of the fixed point. This is done by the following steps.
    \begin{enumerate}
    	\item Evaluate
    	\[\tilde{v}_1=\min\left\{\begin{array}{l}u_1-2w,-u_1-2w, \\ \frac13(2b-4w+u_1),\frac13(2c-4w-u_1),\\ \frac12d-w,a-w\end{array}\right\}.\]
    	\item Evaluate $x_2=\frac12(\tilde{v}_1+u_1)$ and $x_3=\frac12(\tilde{v}_1-u_1)$.
    	\item Evaluate $x_1=\min\{x_2,x_3,\frac{b+x_2}{2},\frac{c+x_3}{2},\frac{d}{2}\}$.
    	\item Evaluate $v_1=\frac13(2x_1-x_2-x_3)$, to finish.
    \end{enumerate}
    As every step is piecewise-linear and continuous, so is $G$, the end result.
    
    To show why such $G$ is working, we first remark some properties of points in $\Fix(\trop(s_1))$. 
    With $f_1(x_2,x_3)$ above, we have $f_{Sk,abcd}(x_1,x_2,x_3)=\min\{2x_1,a+x_1,f_1(x_2,x_3)\}-(x_1+x_2+x_3)$. If $(x_1,x_2,x_3)\in\Fix(\trop(s_1))$, then $f_1(x_2,x_3)=2x_1$, so we have
    \begin{align*}
        f_{Sk,abcd}(x_1,x_2,x_3) &= \min\{2x_1,a+x_1,2x_1\}-(x_1+x_2+x_3) \\
        &= \min\{x_1,a\} - (x_2+x_3).
    \end{align*}
    Because $w=f_{Sk,abcd}(x_1,x_2,x_3)$, it follows that $w+x_2+x_3=\min(x_1,a)$. Together with $x_1=\frac12f_1(x_2,x_3)$, we have
    \begin{align}
        x_1 &= \min\{x_2,x_3,\frac{b+x_2}{2},\frac{c+x_3}{2},\frac{d}{2}\}, \label{eqn:fixed-set-equation-1} \\
        w+x_2+x_3 &= \min(x_1,a). \label{eqn:fixed-set-equation-2}
    \end{align}
    Plug in \eqref{eqn:fixed-set-equation-1} into \eqref{eqn:fixed-set-equation-2}. Then we have
    \[w+x_2+x_3=\min\{x_2,x_3,\frac{b+x_2}{2},\frac{c+x_3}{2},\frac{d}{2},a\}.\]
    Set $\tilde{v}_1=x_2+x_3$ and multiply 2 on both sides. With $u_1=x_2-x_3$, we simplify
    \[2\tilde{v}_1+2w=\min\{\tilde{v}_1+u_1,\tilde{v}_1-u_1,b+\frac12(\tilde{v}_1+u_1),c+\frac12(\tilde{v}-u_1),d,2a\}.\]
    This equality can be expanded as a list of inequalities, with at least one of them holding as an equality. Solving the inequalities in $\tilde{v}_1$, one then get an equation of $\tilde{v}_1$ in $(w,u_1)$, which is the first step of the algorithm.
    
    As $\tilde{v}_1=x_2+x_3$, we have $x_2=\frac12(\tilde{v}_1+u_1)$ and $x_3=\frac12(\tilde{v}_1-u_1)$ following. We recover $x_1$ by \eqref{eqn:fixed-set-equation-1}, and $v_1$ by $v_1=\frac13(2x_1-x_2-x_3)$.
\end{proof}

Furthermore, two fixed loci interesect on a half-plane, and they intersect each level set $\{f_{Sk,abcd}=w\}$ on a half-ray.

\begin{proposition}
   \label{lem:intersection-of-fixed-sets}
   \begin{enumerate}[(a)]
       \item The intersection $\Fix(\trop(s_1))\cap\Fix(\trop(s_2))$ in $\R^3$ is topologically a copy of the half-plane, $\R\times[0,\infty)$.
       \item Moreover, there is a coordinate $(w;v_3,u_3)$ of $\R\times\Pi$ and a piecewise-linear continuous function $G\colon\R\to\R$ such that the intersection appears as $u_3=0$ and $v_3\geq-G(w)$.
       \item The intersection of all, $\bigcap_{i=1}^3\Fix(\trop(s_i))$, in $\R^3$ is topologically a half-ray that only intersects those level sets $\{f_{Sk,abcd}=w\}$ with $w\geq-\min\{a,b,c,\frac12d\}$.
   \end{enumerate}
\end{proposition}
\begin{proof}
   (a) The fixed loci $\Fix(\trop(s_1))$ and $\Fix(\trop(s_2))$ are characterized by the following equations:
   \begin{align}
       2x_1 &= \min\{2x_2,2x_3,b+x_2,c+x_3,d\}, \label{eqn:fix-trop(s1)} \\
       2x_2 &= \min\{2x_1,2x_3,a+x_1,c+x_3,d\}. \label{eqn:fix-trop(s2)}
   \end{align}
   If we assume both \eqref{eqn:fix-trop(s1)} and \eqref{eqn:fix-trop(s2)}, then we have $x_1=x_2$, as \eqref{eqn:fix-trop(s1)} yields $2x_1\leq 2x_2$ and, likewise, \eqref{eqn:fix-trop(s2)} yields $2x_2\leq 2x_1$. Moreover, combining the minimums in \eqref{eqn:fix-trop(s1)} and \eqref{eqn:fix-trop(s2)}, we have
   \begin{equation}
   \label{eqn:intersection-fix-trop(s1)-trop(s2)}
       2x_1=2x_2\leq\min\{2x_3,a+x_1,b+x_2,c+x_3,d\}.
   \end{equation}
   This inequality~\eqref{eqn:intersection-fix-trop(s1)-trop(s2)} can be expanded as a list of inequalities. Solving these inequalities in $x_1$, we have
   \[x_2=x_1\leq\min\{x_3,a,b,\frac{c+x_3}2,\frac{d}2\}.\]
   Hence, on the $(x_3,x_1)$-plane, the intersection of fixed loci appears as a hypograph of a continuous function in $x_3$. Thus the conclusion.
   
   (b) We set $w=f_{Sk,abcd}$, $v_3=\frac13(-x_1-x_2+2x_3)$, and $u_3=v_1-v_2=x_1-x_2$. We set $G(w)=\min\{0,\frac13(w+c),\frac23(w+a),\frac23(w+b),\frac23(w+\frac12d)\}$. All what remains is that why these choices are working.
   
   From \eqref{eqn:intersection-fix-trop(s1)-trop(s2)}, we have not only $u_3=0$, but also
   \begin{align*}
       f_{Sk,abcd}(x_1,x_2,x_3) &= \min\{2x_1,2x_2\}-(x_1+x_2+x_3) \\
       &= 2x_1-(2x_1+x_3)=-x_3.
   \end{align*}
   So on the intersection, we have $w=-x_3$, and $v_3=\frac13(-2x_1-2w)$ follows to yield $x_1=x_2=-\frac32v_3-w$. If we plug these into \eqref{eqn:intersection-fix-trop(s1)-trop(s2)} back, then we have
   \[-3v_3-2w\leq\min\{-2w,a-\frac32v_3-w,b-\frac32v_3-w,c-w,d\}.\]
   Expanding this to a list of inequalities and solving them in $-v_3$, we have
   \[-v_3\leq\min\{0,\frac23(a+w),\frac23(b+w),\frac13(c+w),\frac13(2w+d)\}=G(w).\]
   Thus $u_3=0$ and $v_3\geq -G(w)$ represents the intersection.
   
   (c) If we add $2x_3=\min\{2x_1,2x_2,a+x_1,b+x_2,d\}$ to \eqref{eqn:intersection-fix-trop(s1)-trop(s2)}, we have $x_1=x_2=x_3\leq\min\{a,b,c,\frac12d\}$, which is the equation of the total intersection. The equation reveals that it is a half-ray, and on that half-ray, we evaluate $f_{Sk,abcd}(x_1,x_2,x_3)=-x_3\geq-\min\{a,b,c,\frac12d\}$. Hence only the level sets $\{f_{Sk,abcd}=w\}$ with $w\geq-\min\{a,b,c,\frac12d\}$ can find the total intersection point.
\end{proof}

Topologically speaking, each $\trop(s_i)$ acts on $\{f_{Sk,abcd}=w\}\cong\Pi$ as a proper reflection on the plane, and the fixed loci of two intersect on a half-ray. On the next section, we discuss how this constructs a dynamical structure of $\trop(s_i)$'s acting on the skeleton.

The geometric shape of skeleton and fixed loci may go complicated, as sketched in Figure~\ref{fig:typical-skeleton-fixed-loci}. This convinces that a topological approach would rather be effective here.

\begin{figure}
    \centering
        %% parameter: (a,b,c,d) = (-1.3,-1.4,-1.5,-2.4)
		\tdplotsetmaincoords{{90-.5*acos(1/3)}}{135} % for isometric vision
		%\tdplotsetmaincoords{60}{90} % for "real" vision
		\begin{tikzpicture}[scale={(3/2)^(1/2)},tdplot_main_coords]
        %\begin{tikzpicture}[scale=.8,
        %x={({(13/16)^(1/2)},0)}, % 13/16 + (.5*cos(30))^2 = 1
        %y={(0,{(15/16)^(1/2)})}, % 15/16 + (.5*sin(30))^2 = 1
        %z={({-.5*cos(30)},{-.5*sin(30)})}]
			\foreach \j in {-3}{ % infinity
            % coordinate axes
            \draw[-latex,dashed,gray] (\j,0,0) -- (.5,0,0);% node[left] {$x_1$};
            \draw[-latex,dashed,gray] (0,\j,0) -- (0,.5,0);% node[right] {$x_2$};
            \draw[-latex,dashed,gray] (0,0,\j) -- (0,0,.5);% node[right] {$x_3$};

			% ping-pong table
            \foreach \i/\k in {gray/.5}{
            %% the cell D
            \fill[\i,opacity=\k] (-1.1,-.4,-.9) % D && AX1 && CX3
            -- (-1.1,-1,-.3) % D && AX1 && BX2
            -- (-.5,-1,-.9) % D && CX3 && BX2
            -- (-1.1,-.4,-.9) % D && CX3 && AX1
            ;
            %% on cell AX1
            \fill[\i,opacity=\k] (-1.3,-1.2,-.1) % AX1 && X1^2 && BX2
            -- (-1.2,-1,-.3) % AX1 && 2x1 = -1.4+x2 = -2.4
			-- (-1.2,-.4,-.9) % AX1 && 2x1 = -1.5+x3 = -2.4
			-- (-1.3,-.2,-1.1) % AX1 && X1^2 && CX3
            -- (-1.1,-.4,-.9) % AX1 && D && CX3
            -- (-1.1,-1,-.3) % AX1 && D && BX2
            -- (-1.3,-1.2,-.1) % AX1 && X1^2 && BX2
            ;
            %% on cell BX2
			\fill[\i,opacity=\k] (-.1,-1.4,-1.3) % BX2 && X2^2 && CX3
            -- (-.5,-1.2,-.9) % BX2 && 2x2 = -1.5+x3 = -2.4
			-- (-1.1,-1.2,-.3) % BX2 && 2x2 = -1.3+x1 = -2.4
			-- (-1.3,-1.3,-.1) % BX2 && 2x2 = 2x1 = -1.3+x1
			-- (-1.4,-1.4,0) % BX2 && X1^2 && X2^2
            -- (-1.3,-1.2,-.1) % BX2 && X1^2 && AX1
            -- (-1.1,-1,-.3) % BX2 && D && AX1
            -- (-.5,-1,-.9) % BX2 && D && CX3
            -- (-.1,-1.4,-1.3) % BX2 && X2^2 && CX3
            ;
			%% on cell CX3
            \fill[\i,opacity=\k] (-1.5,0,-1.5) % CX3 && X1^2 && X3^2
			-- (-1.3,-.2,-1.3) % CX3 && 2x3 = 2x1 = -1.3+x1
			-- (-1.1,-.4,-1.2) % CX3 && 2x3 = -1.3+x1 = -2.4
			-- (-.5,-1,-1.2) % CX3 && 2x3 = -2.4 = -1.4+x2
			-- (0,-1.5,-1.5) % CX3 && X2^2 && X3^2
			-- (-.1,-1.4,-1.3) % CX3 && X2^2 && BX2
            -- (-.5,-1,-.9) % CX3 && D && BX2
            -- (-1.1,-.4,-.9) % CX3 && D && AX1
            -- (-1.3,-.2,-1.1) % CX3 && X1^2 && AX1
            -- (-1.5,0,-1.5) % CX3 && X1^2 && X3^2
			;
            }

			%% intersection rays
			%\draw[gray] (0,-1.5,-1.5) -- (0,\j,\j);
			%\draw[gray] (-1.5,0,-1.5) -- (\j,0,\j);
			%\draw[gray] (-1.4,-1.4,0) -- (\j,\j,0);

            % near-infinity lines
            \draw[gray] (0,\j,\j) -- (\j,0,\j) -- (\j,\j,0) -- (0,\j,\j);

            \foreach \i in {black!75!white}{
			% cell D
			% TODO: find intersections with cells AX1, BX2, CX3.
			%   Focus on intersections of 3 cells.
			%omit % other cells will draw these boundaries
			% cell CX3
			% TODO: find intersections with D, AX1, X1^2, X3^2, X2^2, BX2.
			%   Focus on intersections of 3 cells.
			\draw[\i] (-.5,-1,-.9) % CX3 && D && BX2
			-- (-1.1,-.4,-.9) % CX3 && AX1 && D
			-- (-1.3,-.2,-1.1) % CX3 && X1^2 && AX1
			-- (-1.5,0,-1.5)  % CX3 && X3^2 && X1^2
			-- (0,-1.5,-1.5) % CX3 && X2^2 && X3^2
			-- (-.1,-1.4,-1.3) % CX3 && BX2 && X2^2
			-- (-.5,-1,-.9);
			% cell BX2
			% TODO: find intersections with D, CX3, [X3^2,] X2^2, X1^2, AX1.
			%   Focus on ontersections of 3 cells.
			%   (turns out that cell X3^2 is irrelevant)
			\draw[\i] (-.1,-1.4,-1.3) % BX2 && X2^2 && CX3
			-- (-1.4,-1.4,0) % BX2 && X1^2 && X2^2
			-- (-1.3,-1.2,-.1) % BX2 && AX1 && X1^2
			-- (-1.1,-1,-.3) % BX2 && D && AX1
			-- (-.5,-1,-.9); % BX2 && CX3 && D
			% cell AX1
			% TODO: find intersections with D, BX2, [X2^2,] X1^2, [X3^2,] CX3.
			%   Focus on ontersections of 3 cells.
			%   (turns out that cells X2^2, X3^2 are irrelevant)
			\draw[\i] (-1.1,-1,-.3) % AX1 && D && BX2
			-- (-1.1,-.4,-.9); % AX1 && D && CX3
			\draw[\i] (-1.3,-1.2,-.1) % AX1 && X1^2 && BX2
			-- (-1.3,-.2,-1.1); % AX1 && X1^2 && CX3
            }

			% fixed set for s1
			\draw[red] (\j,\j,0) % X1^2 && X2^2 ray
			-- (-1.4,-1.4,0) % X1^2 && BX2 && X2^2
			-- (-1.3,-1.2,-.1) % AX1 && X1^2 && BX2
			-- (-1.2,-1,-.3) % AX1 && 2x1 = -1.4+x2 = -2.4
			-- (-1.2,-.4,-.9) % AX1 && 2x1 = -1.5+x3 = -2.4
			-- (-1.3,-.2,-1.1) % AX1 && X1^2 && CX3
			-- (-1.5,0,-1.5) % X1^2 & CX3 & X3^2
			-- (\j,0,\j); % X1^2 & X3^2 ray
			% fixed set for s2
			\draw[green!50!black] (0,\j,\j) % X2^2 && X3^2 ray
			-- (0,-1.5,-1.5) % CX3 && X2^2 && X3^2
			-- (-.1,-1.4,-1.3) % BX2 && X2^2 && CX3
			-- (-.5,-1.2,-.9) % BX2 && 2x2 = -1.5+x3 = -2.4
			-- (-1.1,-1.2,-.3) % BX2 && 2x2 = -1.3+x1 = -2.4
			-- (-1.3,-1.3,-.1) % BX2 && 2x2 = 2x1 = -1.3+x1
			-- (-1.4,-1.4,0) % X1^2 && BX2 && X2^2
			-- (\j,\j,0); % X1^2 && X2^2 ray
			% fixed set for s3
			\draw[blue] (\j,0,\j) % X1^2 && X3^2 ray
			-- (-1.5,0,-1.5)  % CX3 && X3^2 && X1^2
			-- (-1.3,-.2,-1.3) % CX3 && 2x3 = 2x1 = -1.3+x1
			-- (-1.1,-.4,-1.2) % CX3 && 2x3 = -1.3+x1 = -2.4
			-- (-.5,-1,-1.2) % CX3 && 2x3 = -2.4 = -1.4+x2
			-- (0,-1.5,-1.5) % CX3 && X2^2 && X3^2
			-- (0,\j,\j); % X2^2 && X3^2 ray
			}
		\end{tikzpicture}
    \caption{Skeleton with $(a,b,c,d)=(-1.3,-1.4,-1.5,-2.4)$ (white with gray boundary), fixed loci $\Fix(\trop(s_i))$ (red/green/blue by $i=1,2,3$), and the bounded region surrounded by these fixed loci (gray).}\label{fig:typical-skeleton-fixed-loci}
\end{figure}

\section{Alignment of Fixed Loci}
\label{sec:alignment-fixed-loci}

The fixed loci of $\trop(s_i)$'s split the skeleton into pieces that is suitable to assess the tropical action of $\Gamma_{ABCD}$ by a ping-pong lemma. Proposition~\ref{lem:cayley-domain-description}, the main goal of this section, is aimed to describe this split in details.

\subsection{The Goal}

Recall the adjacency loci $\partial\cell(c_\alpha X^\alpha\cap c_\beta X^\beta)$~\eqref{eqn:adjacency-locus}. Let $m$ be a monomial, $X_1^2$, $X_2^2$, $X_3^2$, $AX_1$, $BX_2$, $CX_3$, or $D$. Denote by $Sk(m)$ the adjacency locus $\partial\cell(X_1X_2X_3\cap m)$, which is a subset of the skeleton $Sk(a,b,c,d)$. The \emph{interior} of each $Sk(m)$ is the interior defined in the skeleton.

With this, the main point of this section is the following
\begin{proposition}
    \label{lem:cayley-domain-description}
    Write $A_1,A_2,A_3$ for $A,B,C$ respectively.
    There exist topological (open) half-spaces $D_1,D_2,D_3\subset Sk(a,b,c,d)$ such that for $i=1,2,3$,
    \begin{enumerate}[(i)]
	    \item $\trop(s_i)$ sends $D_i$ onto $Sk(a,b,c,d)\setminus\overline{D}_i$,
	    \item the boundary $\partial D_i$ is same as the fixed set of $\trop(s_i)$, and
	    \item each $D_i$ is a subset of $Sk(X_i^2)\cup Sk(A_iX_i)$, and contains the interior of $Sk(X_i^2)$.
    \end{enumerate}

    Furthermore,
    \begin{enumerate}[(i)]
        \addtocounter{enumi}{3}
        \item the domains $D_1,D_2,D_3$ are mutually disjoint; 
        \item the intersection of boundaries $\partial D_i\cap\partial D_j$ is a half-ray in $\partial D_i\cong\R$, and different $j\neq i$'s point different directions; and
        \item the closures $\overline{D}_i$'s union to give all of $Sk(a,b,c,d)$ if and only if $\min\{a,b,c,d\}\geq 0$.
    \end{enumerate}
\end{proposition}

Although each $D_i$'s can be expressed algebraically (for instance, $D_1=\{v_1<G(0,u_1)\}$, borrowing notations from Proposition~\ref{lem:fixed-set-transversality}), these algebraic expressions do not seem to help verifying the aboves. We rather study on how the adjacency loci $Sk(m)$ interact with tropicalized Vieta involutions, and construct $D_i$'s accordingly.

\subsection{Adjacency Loci in the Skeleton}

Recall the skeletal tropical Laurent polynomial
\[f_{Sk,abcd}(x_1,x_2,x_3)=\min\left\{\begin{array}{l} 2x_1, 2x_2, 2x_3 \\ a+x_1,b+x_2,c+x_3,d\end{array}\right\}-(x_1+x_2+x_3).\]
Fix a point $x$ in the skeleton $Sk(a,b,c,d)$. Depending on which tropical monomial this function chooses to evaluate, one can determine which adjacency locus $Sk(m)$ the point lies on. To be specific, we have, for any non-cubic monomial $m$,
\begin{equation}
\label{eqn:membership-adjacency-locus}
f_{Sk,abcd}(x)=\trop(m)(x)-(x_1+x_2+x_3)\Longleftrightarrow x\in Sk(m).
\end{equation}
So we have $f_{Sk,abcd}(x)=(c+x_3)-(x_1+x_2+x_3)$ if and only if $x\in Sk(CX_3)$, etc. 

By a \emph{quadratic (skeletal) adjacency locus}, we mean one of $Sk(X_1^2)$, $Sk(X_2^2)$, or $Sk(X_3^2)$. By a \emph{subquadratic (skeletal) adjacency locus}, we mean one of $Sk(AX_1)$, $Sk(BX_2)$, $Sk(CX_3)$, or $Sk(D)$.
%Moreover, as adjacency loci are convex subsets of appropriate hyperplanes, by the homeomorphism $v\colon Sk(a,b,c,d)\to\Pi$ (see Proposition~\ref{lem:trop-foliation}), each locus $Sk(m)$ maps onto a convex subset of $\Pi$ as well.

\subsection{Tropical Vieta Involutions and Adjacency Loci}

If we look into the proof of invariance of skeleton (Lemma~\ref{lem:skeleton-invariance}) carefully, we observe the followings. Let $f_1(x_2,x_3)=\min\{2x_2,2x_3,b+x_2,c+x_3,d\}$. Then we have $f_{Sk,abcd}(x_1,x_2,x_3)=\min\{2x_1,a+x_1,f_1\}-(x_1+x_2+x_3)$, and the following equivalences:
\begin{align*}
\min\{2(f_1-x_1),a+(f_1-x_1),f_1\}&=2(f_1-x_1) &\Leftrightarrow \min\{2x_1,a+x_1,f_1\}&=f_1; \\
\min\{2(f_1-x_1),a+(f_1-x_1),f_1\}&=a+(f_1-x_1) &\Leftrightarrow \min\{2x_1,a+x_1,f_1\}&=a+x_1; \\
\min\{2(f_1-x_1),a+(f_1-x_1),f_1\}&=f_1 &\Leftrightarrow \min\{2x_1,a+x_1,f_1\}&=2x_1.
\end{align*}
If we interpret this in terms of membership in adjacency loci, we reduce to the following. Let $Sk(f_1)=\bigcup\left\{Sk(m) : m\in\{X_2^2,X_3^2,BX_2,CX_3,D\}\right\}$.
\begin{align*}
(f_1-x_1,x_2,x_3) &\in Sk(X_1^2) &\Leftrightarrow (x_1,x_2,x_3)&\in Sk(f_1), \\
(f_1-x_1,x_2,x_3) &\in Sk(AX_1) &\Leftrightarrow (x_1,x_2,x_3)&\in Sk(AX_1), \\
(f_1-x_1,x_2,x_3) &\in Sk(f_1) &\Leftrightarrow (x_1,x_2,x_3)&\in Sk(X_1^2).
\end{align*}
Recall that $\trop(s_1)(x_1,x_2,x_3)=(f_1-x_1,x_2,x_3)$. These list of ``inversions'' of adjacency loci can be phrased as a statement of how $\trop(s_1)$ act on these loci.
\begin{proposition}
    \label{lem:reflection-cells-atlas}
    The tropicalized Vieta involution $\trop(s_1)$ leaves $Sk(AX_1)$ invariant, and sends $Sk(X_1^2)$ onto the union $Sk(X_2^2)\cup Sk(X_3^2)\cup Sk(BX_2)\cup Sk(CX_3)\cup Sk(D)$.
\end{proposition}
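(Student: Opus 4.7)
The proposition unwinds almost immediately from the dictionary between cells and equality cases of the skeleton-defining inequalities \eqref{eqn:skeleton-inequality-1}--\eqref{eqn:skeleton-inequality-7}, combined with the equivalences \eqref{eqn:reflection-1}--\eqref{eqn:reflection-3} already derived in the proof of Proposition \ref{lem:skeleton-invariance}. The plan is to translate each side of those equivalences into cell membership and then use the fact that $\trop(s_1)$ is an involution.

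First I would make the dictionary precise: for $x \in Sk(a,b,c,d) = \{f_0 = 0\}$, membership in $\mathcal{C}(X_1^2)$ is equivalent to equality in \eqref{eqn:skeleton-inequality-1}, membership in $\mathcal{C}(AX_1)$ to equality in \eqref{eqn:skeleton-inequality-4}, and membership in $\mathcal{C}(X_2^2)\cup\mathcal{C}(X_3^2)\cup\mathcal{C}(BX_2)\cup\mathcal{C}(CX_3)\cup\mathcal{C}(D)$ to equality in the bundled inequality \eqref{eqn:skeleton-inequality-23567}, since that conjunction becomes an equality exactly when at least one of \eqref{eqn:skeleton-inequality-2}, \eqref{eqn:skeleton-inequality-3}, \eqref{eqn:skeleton-inequality-5}, \eqref{eqn:skeleton-inequality-6}, \eqref{eqn:skeleton-inequality-7} is an equality.

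Next, I would invoke the three equivalences \eqref{eqn:reflection-1}--\eqref{eqn:reflection-3}, noting that each of them is obtained by adding or subtracting the same affine expression on both sides and therefore preserves equality, not only inequality. Writing $x_1' = f_1(x_2,x_3) - x_1$, these give: equality in \eqref{eqn:skeleton-inequality-4} at $x$ iff equality in \eqref{eqn:skeleton-inequality-4} at $\trop(s_1)(x)$, and equality in \eqref{eqn:skeleton-inequality-1} at $x$ iff equality in \eqref{eqn:skeleton-inequality-23567} at $\trop(s_1)(x)$. Translated into cells, the first reads $x \in \mathcal{C}(AX_1) \Leftrightarrow \trop(s_1)(x) \in \mathcal{C}(AX_1)$, yielding the invariance claim; the second reads $x \in \mathcal{C}(X_1^2) \Leftrightarrow \trop(s_1)(x) \in \mathcal{C}(X_2^2)\cup\mathcal{C}(X_3^2)\cup\mathcal{C}(BX_2)\cup\mathcal{C}(CX_3)\cup\mathcal{C}(D)$, giving the inclusion $\trop(s_1)(\mathcal{C}(X_1^2))$ into the five-cell union. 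Applying the involution $\trop(s_1)$ to both sides of the latter equivalence then yields surjectivity onto the union.

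There is no substantive obstacle: the whole argument is bookkeeping on top of the algebra already carried out for Proposition \ref{lem:skeleton-invariance}. The only point to verify carefully is that each of \eqref{eqn:reflection-1}--\eqref{eqn:reflection-3} is in fact an equivalence of the equality cases, and this is immediate from inspecting how those equivalences were obtained.
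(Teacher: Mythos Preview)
Your proposal is correct and matches the paper's own argument essentially line for line: the paper also sets up the dictionary between cell membership and equality cases of \eqref{eqn:skeleton-inequality-1}, \eqref{eqn:skeleton-inequality-4}, \eqref{eqn:skeleton-inequality-23567}, and then reads off the invariance of $\mathcal{C}(AX_1)$ from \eqref{eqn:reflection-2} and the exchange of $\mathcal{C}(X_1^2)$ with the five-cell union from \eqref{eqn:reflection-1} and \eqref{eqn:reflection-3}. There is nothing to add.
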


Symmetric versions hold as well: so $\trop(s_2)$ leaves $Sk(BX_2)$ invariant, $\trop(s_3)$ sends $Sk(X_1^2)\cup Sk(X_2^2)\cup Sk(AX_1)\cup Sk(BX_2)\cup Sk(D)$ to $Sk(X_3^2)$, etc.

\subsection{Proof of Proposition~\ref{lem:cayley-domain-description}}

Fix $i\in\{1,2,3\}$. We know that, from Proposition~\ref{lem:fixed-set-transversality}(b), that the complement of a fixed locus $\Fix(\trop(s_i))$ consists of two components, that are swapped by $\trop(s_i)$. By Proposition~\ref{lem:reflection-cells-atlas}, we know that
\begin{enumerate}
    \item the union of $Sk(X_i^2)\cup Sk(A_iX_i)$ and its $\trop(s_i)$-image covers all of the skeleton, and
    \item the interior of $Sk(X_i^2)$ is mapped into its complement, by $\trop(s_i)$.
\end{enumerate}
So we can choose a component $D_i$ that contains the interior of $Sk(X_i^2)$, and such a component must be a subset of $Sk(X_i^2)\cup Sk(A_iX_i)$. This constructs the domains $D_i\subset Sk(a,b,c,d)$, and properties (i)--(iii) are immediate.

To show property (iv), note that
\begin{align*}
    \overline{D}_1 &\subset Sk(X_1^2)\cup Sk(AX_1), &
    \overline{D}_2 &\subset Sk(X_2^2)\cup Sk(BX_2), &
    \overline{D}_3 &\subset Sk(X_3^2)\cup Sk(CX_3).
\end{align*}
As none of the adjacency loci are repeating in above inclusions, we see that any intersection $\overline{D}_i\cap\overline{D}_j$ is nowhere dense. So its interior, $D_i\cap D_j$, must be empty.

To show property (v), that $\partial D_i\cap\partial D_j$ is a ray in $\partial D_i$ is claimed in Proposition~\ref{lem:intersection-of-fixed-sets}(b). To see that they are distinct with distinct $j$'s, let $k\neq i,j$ and consider $L_j=\partial D_i\cap\partial D_j$ and $L_k=\partial D_i\cap\partial D_k$. Then $L_j\cap L_k$ is at most a point, by Proposition~\ref{lem:intersection-of-fixed-sets}(c), whence two cannot point to the same directions.

To show property (vi), note that from Corollary~\ref{lem:cubic-cell-adj-subquadratic-iff-meromorphic}, we have a nonempty subquadratic adjacency locus if and only if $\min\{a,b,c,d\}<0$. By the contrapositive, we have $Sk(a,b,c,d)=\bigcup_{i=1}^3Sk(X_i^2)$ if and only if $\min\{a,b,c,d\}\geq 0$. So if $\min\{a,b,c,d\}\geq 0$, we have $Sk(a,b,c,d)=\bigcup_{i=1}^3\overline{D}_i$ as well.

For the converse, we let $\min\{a,b,c,d\}<0$ and consider two cases: (1) $Sk(D)$ is nonempty, and (2) $Sk(A_iX_i)$ is nonempty for some $i\in\{1,2,3\}$. For (1), the interior of $Sk(D)$ is nonempty and is disjoint from the union $\bigcup_{i=1}^3\overline{D}_i$. For (2), we may suppose $i=1$. As $Sk(AX_1)$ is $\trop(s_1)$-invariant yet $\Fix(\trop(s_1))$ is a line, so $Sk(AX_1)$ is also split by two components swapped by $\trop(s_1)$. Now $D_1$ is taking one component, hence the other component is not taken by neither $\overline{D}_1$ nor the union $\bigcup_{i=1}^3\overline{D}_i$. So the union is not the whole skeleton.

\subsection{Ping-pong}
\label{sec:ping-pong-lemma}
Recall the ping-pong lemma (see, e.g., \cite[\S{II.B} Lemma 24]{PingPongText}) for the group
\begin{equation}
\label{eqn:inf-inf-inf-reflection-group}
\Gamma=\langle\sigma_1,\sigma_2,\sigma_3\mid\sigma_1^2=\sigma_2^2=\sigma_3^2=1\rangle(=(\Z/2\Z)^{\ast 3}).
\end{equation}

\begin{theorem}[Ping-pong Lemma]
    \label{thm:ping-pong-lemma}
    Let $\Gamma$ be the group~\eqref{eqn:inf-inf-inf-reflection-group} that acts on a set $X$. Suppose there exist disjoint nonempty subsets $X_1,X_2,X_3\subset X$ such that $\sigma_j.X_i\subset X_j$ whenever $i\neq j$. Then $\Gamma$ acts faithfully on $X$.
\end{theorem}

Recall the group $\trop(\Gamma_{ABCD})$ generated by tropical Vieta involutions $\trop(s_i)$, $i=1,2,3$ (see \S{\ref{sec:markov-trop-vieta}}). Observe that (1) by the natural group map $\Gamma\to\trop(\Gamma_{ABCD})$, $\sigma_i\mapsto\trop(s_i)$, and (2) by $X=Sk(a,b,c,d)$ and $X_i=D_i$, we have that the hypotheses of Theorem~\ref{thm:ping-pong-lemma} are met. (For instance, we have $\trop(s_1).D_2\subset\trop(s_1).(X\setminus\overline{D}_1)=D_1$.) Therefore $\trop(\Gamma_{ABCD})$, and $\Gamma_{ABCD}$ as well, are isomorphic to $\Gamma$.

\begin{definition}[Tropical representation]
\label{def:tropical-representation}
The group map sending $s_i\mapsto\trop(s_i)$,
\begin{equation}
\label{eqn:tropical-representation}
\trop\colon\Gamma_{ABCD}\to\trop(\Gamma_{ABCD}),
\end{equation}
is called the \emph{tropical representation} of $\Gamma_{ABCD}$. For any $\gamma\in\Gamma_{ABCD}$, we define $\trop(\gamma)$ as the image of the tropical representation.
\end{definition}

\begin{remark}
    The above ping-pong argument gives an alternative proof of El'Huti's result $\Gamma_{ABCD}\cong\Gamma$~\cite[Thm. 1]{ElHuti}, but over a non-archimedean field. The original proof of El'Huti is based on the $\Gamma_{ABCD}$-action on the Picard--Manin space~\cite[\S{3.2}]{Can11} of the (projective closure of the) surface~\cite[\S{1}]{ElHuti}. Our new proof is essentially not away from this idea, but using different language to access it.
\end{remark}

The fact that $D_i\subset Sk(a,b,c,d)$ naturally gives an environment to apply the ping-pong lemma leads us to understand the behavior of $Sk(a,b,c,d)\setminus\bigcup_{i=1}^3\overline{D}_i$ under the dynamics. This will be addressed in \S{\ref{sec:pp-theory}} below; before that, we need to understand the behavior of $\trop(s_i)$'s ``near infinity.''

\section{Quadratic Adjacency Loci}
\label{sec:quad-cell}

In this section, we discuss how a quadratic adjacency locus reduces to a subquadratic one by applying an appropriate tropicalized map. The main result of the section is announced in Proposition~\ref{lem:quadratic-reduction-result} below.

\subsection{The Goal}

Let a \emph{boundary ray} be an intersection $Sk(X_i^2)\cap Sk(X_j^2)$ ($i\neq j$) of distinct quadratic adjacency loci. In fact, some inequality manipulations show the following numerical descriptions:
\begin{align}
    Sk(X_2^2)\cap Sk(X_3^2) &=\left\{(0,t,t) : t\leq\Theta_1\right\}, \label{eqn:boundary-ray-1} \\
    Sk(X_1^2)\cap Sk(X_3^2) &=\left\{(t,0,t) : t\leq\Theta_2\right\}, \label{eqn:boundary-ray-2} \\
    Sk(X_1^2)\cap Sk(X_2^2) &=\left\{(t,t,0) : t\leq\Theta_3\right\}, \label{eqn:boundary-ray-3}
\end{align}
where $\Theta_1=\min\{0,\frac12a,b,c,\frac12d\}$, $\Theta_2=\min\{0,a,\frac12b,c,\frac12d\}$, and $\Theta_3=\min\{0,a,b,\frac12c,\frac12d\}$. 
With this term, we state the main result of this section as follows.

\begin{proposition}
\label{lem:quadratic-reduction-result}
    Suppose $\min\{a,b,c,d\}<0$. For any $x\in Sk(a,b,c,d)$, there exists some $\gamma\in\Gamma_{ABCD}$ such that $\trop(\gamma)(x)$ is on a subquadratic adjacency locus or on a boundary ray.
\end{proposition}

Points not on a subquadratic adjacency locus nor on a boundary ray are precisely those points that lie on the interior of a quadratic adjacency locus. So the discussion lies on how to understand the actions of $\trop(s_i)$, $i=1,2,3$, on quadratic adjacency locus. It turns out that the cone parametrization map $\phi\colon\R^2/\langle\pm1\rangle\to\R^3$~\eqref{eqn:cone-parametrization} is taking a central role here.

\begin{remark}
    Recall the disks $D_1$, $D_2$, and $D_3$ from Proposition~\ref{lem:cayley-domain-description}. Then we have, for $i\neq j$, $\partial D_i\cap\partial D_j=Sk(X_i^2)\cap Sk(X_j^2)$. This follows from numerical estimates found in Propostion~\ref{lem:intersection-of-fixed-sets}(b). Thus later, we will also use `boundary rays' to refer to an intersection $\partial D_i\cap\partial D_j$.
\end{remark}

\subsection{Quadratic Adjacency Loci and Cone Parametrization}

Denote the 1-norm of a space point $x=(x_1,x_2,x_3)\in\R^3$ by $\|x\|_1:=|x_1|+|x_2|+|x_3|$. We choose a norm $\|{}\cdot{}\|$ on $\R^2$ by $\|(u,v)\|=|u|+|v|+|u+v|$. Then the cone parametrization map preserves the norm: $\|\ConeParam(u,v)\|_1=\|(u,v)\|$.

We collect some facts about the quadratic adjacency loci.
\begin{lemma}
    \label{lem:quadratic-cells-basics}
    Let $Sk=Sk(a,b,c,d)$ and $\Theta=\min\{0,a,b,c,\frac12d\}$.
    \begin{enumerate}[(a)]
        \item We have $Sk\subset(-\infty,0]^3$. That is, the space coordinates of any point in the skeleton are nonpositive.
        \item The union $Q=\bigcup_{i=1}^3Sk(X_i^2)$ of quadratic adjacency loci lies on the image of $\ConeParam$.
        \item Let $i=1,2,3$. For any $x=(x_1,x_2,x_3)\in Q\cap\trop(s_i)^{-1}(Q)$, we have the following formulae, depending on $i$:
        \begin{align}
            \trop(s_1)(x_1,x_2,x_3) &= (\min\{2x_2,2x_3\}-x_1,x_2,x_3), \label{eqn:quad-trop(s1)} \\
            \trop(s_2)(x_1,x_2,x_3) &= (x_1,\min\{2x_1,2x_3\}-x_2,x_3), \label{eqn:quad-trop(s2)} \\
            \trop(s_3)(x_1,x_2,x_3) &= (x_1,x_2,\min\{2x_1,2x_2\}-x_3). \label{eqn:quad-trop(s3)}
        \end{align}
        \item If $x=(x_1,x_2,x_3)\in Sk$ is such that $x_1+x_2+x_3\leq 2\Theta$, then $x\in\bigcup_{i=1}^3Sk(X_i^2)$. In particular, $\bigcup_{i=1}^3Sk(X_i^2)$ is a neighborhood of infinity.
    \end{enumerate}
\end{lemma}
\begin{proof}
    (a) One way to show this is to use Lemma~\ref{lem:inequality-formulation-tropical-skeletal-Laurent}, with $w=0$. If $(x_1,x_2,x_3)\in Sk(a,b,c,d)$, we have the following inequalities:
    \begin{align*}
        x_1+x_2+x_3 &\leq 2x_1, & x_1+x_2+x_3 &\leq 2x_2, & x_1+x_2+x_3 &\leq 2x_3.
    \end{align*}
    The first two verifies $x_3\leq x_1-x_2$ and $x_3\leq x_2-x_1$. Hence $x_3\leq -|x_1-x_2|\leq 0$ follows. Likewise for other two variables.

    (b) Note first that, on the union $\bigcup_{i=1}^3Sk(X_i^2)$, we have
    \[\min\left\{\begin{array}{l}2x_1,2x_2,2x_3, \\ a+x_1,b+x_2,c+x_3,d\end{array}\right\}=\min\{2x_1,2x_2,2x_3\}.\]
    Hence the union lies on the locus $x_1+x_2+x_3=\min\{2x_1,2x_2,2x_3\}$~\eqref{eqn:image-cone-parametrization}, the image of $\ConeParam$.

    (c) We focus on $i=1$, as the other two are dealt similarly. If $x\in Sk(X_2^2)\cup Sk(X_3^2)$, then
    \[\min\{2x_2,2x_3\}\leq\min\{2x_1,a+x_1,b+x_2,c+x_3,d\}\leq\min\{b+x_2,c+x_3,d\},\]
    so in particular,
    \begin{equation}
        \label{eqn:quadratic-cells-basics-1}
        \min\{2x_2,2x_3,b+x_2,c+x_3,d\}=\min\{2x_2,2x_3\}.
    \end{equation}
    This suffices to verify \eqref{eqn:quad-trop(s1)} for this case. 
    
    If $x\in Sk(X_1^2)$, then from $\trop(s_1)(x)\in\bigcup_{i=1}^3Sk(X_i^2)$, we have $\trop(s_1)(x)\in Sk(X_2^2)\cup Sk(X_3^2)$. If we write $\trop(s_1)(x)=(x_1',x_2,x_3)$, then we have
    \[\min\{2x_2,2x_3\}\leq\min\{2x_1',a+x_1',b+x_2,c+x_3,d\}\leq\min\{b+x_2,c+x_3,d\},\]
    so \eqref{eqn:quadratic-cells-basics-1} still holds, which yields \eqref{eqn:quad-trop(s1)} for this case.

    (d) It suffices to show that, if $x_1+x_2+x_3<2\Theta$ then $x$ is not in any subquadratic adjacency locus. We show its contrapositive: any point $x$ in a subquadratic adjacency locus verifies $x_1+x_2+x_3\geq 2\Theta$.

    If $x\in Sk(D)$, then we have $x_1+x_2+x_3=d\geq 2\Theta=\min\{0,2a,2b,2c,d\}$. If $x\in Sk(AX_1)$, then we have $x_1+x_2+x_3=a+x_1\leq 2x_1$, so $x_2+x_3=a$ and $x_1\geq a$ follows. Hence $x_1+x_2+x_3\geq 2a\geq 2\Theta$. The cases $x\in Sk(BX_2)$ and $x\in Sk(CX_3)$ are dealt similarly with the case $x\in Sk(AX_1)$.

    The claim also verifies that $\bigcup_{i=1}^3Sk(X_i^2)$ is in the complement of the ball $\|x\|_1<-2\Theta$, thanks to part (a). Thus the union is a neighborhood of infinity.
\end{proof}

\begin{remark}
    Propositions~\ref{lem:holomorphic-same-skeletons} and \ref{lem:holomorphic-same-dynamics} can be seen as special cases of this Lemma. Note also that \eqref{eqn:quad-trop(s1)} and \eqref{eqn:trop-s1-holomorphic} are the same formulae, and similarly for the other two.
\end{remark}

\subsection{\texorpdfstring{$\mathsf{GL}_2(\Z)$}{GL2(Z)}-orbits in \texorpdfstring{$\R^2$}{R2}: the GCD Function}

Albeit a basicmost thing in arithmetic, we show that there is an upper-semicontinuous function $\gcd\colon\R^2\to\R$ that almost determines the $\mathsf{GL}_2(\Z)$-orbit in $\R^2$ (see Lemma~\ref{lem:GL2Z-orbit-plane}). To start, we introduce a function on $\R$ that is designed to return ``$\gcd(1,x)$.'' This function will be called \emph{Thomae's function}, following \cite{BRS09}.

\begin{definition}[Thomae's Function]
    Let $f\colon\R\to[0,1]$ be a function defined as follows. If $x=p/q$ is a rational number with $p,q\in\Z$ coprime, then we define $f(x)=1/|q|$. If $x$ is irrational, we define $f(x)=0$.

    This function $f$ is called the \emph{Thomae's function}. We will denote this function by $\gcd(1,x)$.
\end{definition}

We think that 0 is coprime with 1, so that we declare $\gcd(1,0)=1$. We also have the following classical fact.

\begin{theorem}
    \label{thm:thomae}
    Thomae's function is continuous at irrational numbers, and discontinuous at rational numbers. More precisely, we have $\lim_{x\to c}\gcd(1,x)=0$ for any real $c$.
\end{theorem}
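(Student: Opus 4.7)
The plan is to reduce everything to the single statement $\lim_{x\to c}\gcd(1,x)=0$ for all real $c$, since this limit statement immediately implies the continuity dichotomy: at an irrational $c$, the value $\gcd(1,c)=0$ matches the limit (continuous), whereas at a rational $c$, the value $\gcd(1,c)=1/|q|>0$ differs from the limit (discontinuous). So the whole proof consists of an $\varepsilon$--$\delta$ argument for the limit.

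For the limit, I would fix $c\in\R$ and $\varepsilon>0$, and observe that the only points $x\neq c$ where $\gcd(1,x)\geq\varepsilon$ can possibly fail the bound are rationals $x=p/q$ in lowest terms with $|q|\leq 1/\varepsilon$; all irrationals contribute $0$. The key combinatorial step is then to note that the set
\[S_\varepsilon := \left\{p/q\in\Q\cap[c-1,c+1] : p,q\text{ coprime},\ 1\leq |q|\leq 1/\varepsilon\right\}\]
is finite, because for each of the finitely many eligible denominators $q$, only finitely many numerators $p$ land in $[c-1,c+1]$. Hence the minimum distance
\[\delta := \min\bigl(\{1\}\cup\{|s-c| : s\in S_\varepsilon\setminus\{c\}\}\bigr)\]
is strictly positive.

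To conclude, I would check that for any $x$ with $0<|x-c|<\delta$ we have $\gcd(1,x)<\varepsilon$: either $x$ is irrational, in which case $\gcd(1,x)=0$, or $x=p/q$ in lowest terms with $x\notin S_\varepsilon$ by choice of $\delta$, forcing $|q|>1/\varepsilon$ and hence $\gcd(1,x)=1/|q|<\varepsilon$. This establishes the limit, and the continuity/discontinuity consequences follow by comparing with $\gcd(1,c)$ at each $c$. The only non-trivial ingredient is the finiteness of $S_\varepsilon$, which is a standard observation about the distribution of rationals with bounded denominator and presents no real obstacle; the remainder is bookkeeping.
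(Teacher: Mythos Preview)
Your proof is correct and is the standard $\varepsilon$--$\delta$ argument for this classical fact. The paper itself does not give a proof of this theorem at all---it simply states the result as well known---so there is nothing to compare against; your write-up would serve perfectly well as a complete proof.
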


\begin{definition}
    \label{def:gcd-function}
Given Thomae's function $\gcd(1,x)$, we define $\gcd(a,b)$ as
\begin{equation}
    \label{eqn:gcd-function}
    \gcd(a,b)=\begin{cases} |a|\cdot\gcd(1,b/a) & (a\neq 0), \\ |b| & (a=0). \end{cases}
\end{equation}
\end{definition}
This is compatible with the usual GCD of integers. To see why, we suppose $a,b\in\Z$ and both are nonzero. If $\delta>0$ is the GCD of $a$ and $b$, then we can write $a=\pm p\delta$ and $b=\pm q\delta$ for $p,q\in\Z_{>0}$ coprime. Thus $|a|\cdot\gcd(1,b/a)=p\delta\cdot\gcd(1,\pm q/p)=p\delta(1/p)=\delta$ follows.

Not only $\gcd(a,b)$ above generalizes GCD of integers, but for $a,b$ reals: it measures how two numbers are rationally linearly dependent. We list some properties of $\gcd(a,b)$ in this vein, without proofs.
\begin{proposition}
    \label{lem:gcd-properties}
    \begin{enumerate}[(a)]
        \item We have $\gcd(cx,cy)=|c|\cdot\gcd(x,y)$ for all $c\in\R$.
        \item Suppose $a,b$ are $\Q$-linearly dependent with $b\neq 0$. If $a/b=p/q$ with $p,q\in\Z$ coprime, we have
        \[\gcd(a,b) = \left|\frac{a}{p}\right| = \left|\frac{b}{q}\right|.\]
        (If $a=0$, ignore $|a/p|$.)
        \item If $a,b$ are $\Q$-linearly independent, then $\gcd(a,b)=0$.
        \item Whenever $A=\begin{bmatrix}
            \alpha & \beta \\ \gamma & \delta
        \end{bmatrix}$ is in $\mathrm{GL}_2(\Z)$, we have $\gcd(\alpha x+\beta y,\gamma x+\delta y)=\gcd(x,y)$.
    \end{enumerate}
\end{proposition}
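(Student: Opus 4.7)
The plan is to derive parts (a)--(c) directly from the definition \eqref{eqn:gcd-function} and Theorem \ref{thm:thomae}, and to reduce part (d) to the invariance of primitive integer vectors under $\mathrm{GL}_2(\Z)$.

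For (a), I would split on whether $c = 0$ or $x = 0$. If either vanishes, both sides collapse to $|cy|$ via the $a = 0$ clause of \eqref{eqn:gcd-function}. Otherwise $cx \neq 0$ and a one-line computation gives $\gcd(cx, cy) = |cx|\cdot\gcd(1, y/x) = |c|\cdot\gcd(x,y)$. For (b), the hypothesis $a/b = p/q$ with coprime $p, q \in \Z$ forces $q \ne 0$: if $a = 0$ then $p = 0$ and $|q| = 1$, so $|b/q| = |b| = \gcd(0, b)$; if $a \ne 0$ then $b/a = q/p$ is already in lowest terms, Thomae's function returns $1/|p|$, and $\gcd(a, b) = |a|/|p|$, with $|a/p| = |b/q|$ following from $aq = bp$. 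Part (c) is immediate: $\Q$-linear independence forces both $a, b$ nonzero and $b/a$ irrational, so $\gcd(1, b/a) = 0$ by Theorem \ref{thm:thomae}.

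The main work is (d). When $(x, y) = (0, 0)$ both sides vanish trivially. When $x, y$ are $\Q$-linearly independent, so are $\alpha x + \beta y$ and $\gamma x + \delta y$ (since $A \in \mathrm{GL}_2(\Q)$), and both sides vanish by (c). The remaining case is $(x, y) \ne (0, 0)$ with $x, y$ $\Q$-linearly dependent. Here I would write $(x, y) = t(p, q)$, uniquely up to sign, with $t \in \R$ and $(p, q) \in \Z^2$ a primitive integer vector (classical $\gcd$ equal to $1$). A short check from the definition confirms that primitivity of $(p, q)$ in the classical sense coincides with $\gcd(p, q) = 1$ in the sense of \eqref{eqn:gcd-function} (treating the $p = 0$ or $q = 0$ edge cases separately), so (a) gives $\gcd(x, y) = |t|\cdot\gcd(p, q) = |t|$.

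The crux is then the lemma that $\mathrm{GL}_2(\Z)$ preserves primitivity: if $A \in \mathrm{GL}_2(\Z)$ and $(p, q)^T \in \Z^2$ is primitive, then $(p', q')^T := A(p, q)^T$ is again primitive. This holds because $A^{-1}$ also has integer entries ($\det A = \pm 1$), so any integer dividing both $p', q'$ also divides $p, q$ and is therefore $\pm 1$. Applying this to $(x, y)^T = t(p, q)^T$ yields $(\alpha x + \beta y, \gamma x + \delta y)^T = t(p', q')^T$ with $(p', q')$ primitive, so $\gcd(\alpha x + \beta y, \gamma x + \delta y) = |t| = \gcd(x, y)$. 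The only mildly delicate step is the edge-case bookkeeping for the identification of ``primitive integer vector'' with $\gcd = 1$ when an entry vanishes; everything else is routine.
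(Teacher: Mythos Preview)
Your proposal is correct. The paper actually states this proposition explicitly ``without proofs,'' treating it as a routine collection of facts about the $\gcd$ function, so there is no argument in the paper to compare against; your write-up supplies exactly the kind of elementary verification the paper elides, and the edge-case bookkeeping you flag (vanishing entries, primitivity under $\mathrm{GL}_2(\Z)$) is handled correctly.
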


From Theorem \ref{thm:thomae}, one can show that $\lim_{(x,y)\to(a,b)}\gcd(x,y)=0$ for all $(a,b)\in\R^2$. As $\gcd(a,b)\geq 0$ by definition, this limit indicates that $\gcd$ is an upper-semicontinuous function. In particular, the superlevel sets $\{(a,b)\in\R^2 : \gcd(a,b)\geq\eta\}$ are closed. In fact, if $\eta>0$, we can explicitly describe this set as a family of rays $\bigcup_{(p,q)=1}\{(tp,tq) : t\geq\eta\}$, where $(p,q)$ runs through all pairs of integers that are coprime.

\begin{remark}
    The concept and notation of the $\gcd$ function is not original; it has previously been encountered in a graphing software \cite{grafeq}, where the function is denoted by $\gcd(x,y)$.
\end{remark}

The ray description of the superlevel set can be addressed in dynamical terms, as follows.
\begin{lemma}
\label{lem:GL2Z-orbit-plane}
    Let $G=\mathsf{GL}_2(\Z)$ and $G(2)=\ker(\mathsf{GL}_2(\Z)\to\mathsf{GL}_2(\Z/2\Z))$. Let $(x,y)\in\R^2$ be any vector.
    \begin{enumerate}[(a)]
        \item If $\gcd(x,y)=0$, then for every neighborhod $U$ of the origin, there exists $g\in G(2)$ such that $g.(x,y)\in U$.
        \item If $\delta=\gcd(x,y)>0$, then there exists $g\in G$ such that $g.(x,y)=(\delta,0)$.
        \item If $\delta=\gcd(x,y)>0$, then the $G(2)$-orbit of the vector $(x,y)$ passes one of the following 6 vectors: $(\pm\delta,0)$, $(0,\pm\delta)$, or $(\pm\delta,\mp\delta)$.
    \end{enumerate}
\end{lemma}
\begin{proof}
    Throughout the proof, we fix a set of matrices
    \begin{align*}
        g_1 &= I_2, &
        g_2 &= \begin{bmatrix} -1 & -1 \\ 0 & 1 \end{bmatrix}, &
        g_3 &= \begin{bmatrix} 1 & 0 \\ -1 & -1 \end{bmatrix}, \\
        g_4 &= \begin{bmatrix} 0 & 1 \\ 1 & 0 \end{bmatrix}, &
        g_5 &= \begin{bmatrix} -1 & -1 \\ 1 & 0 \end{bmatrix}, &
        g_6 &= \begin{bmatrix} 0 & 1 \\ -1 & -1 \end{bmatrix},
    \end{align*}
    so that the quotient group $G(2)\backslash G$ equals to $\{G(2).g_i : 1\leq i\leq 6\}$, and $\{g_i.(\delta,0) : 1\leq i\leq 6\}=\{(\pm\delta,0),(0,\pm\delta),(\pm\delta,\mp\delta)\}$.
    
    (a) If $x=0$, then $\gcd(x,y)=|y|=0$ makes the claim obvious. If not, the ratio $y/x$ defines an irrational number. Let $a_0=\lfloor y/x\rfloor$ and $a_1,a_2,\ldots\in\Z_{>0}$ be such that $y/x-a_0=[0;a_1,a_2,\ldots]$, i.e.,
    \[\frac{y}{x}=a_0+\frac1{a_1+\dfrac1{a_2+\dfrac1{a_3+\cdots}}}.\]
    Consider the sequence $(A_n)_{n=0}^\infty$ of matrices,
    \[A_n=\begin{bmatrix} a_0 & 1 \\ 1 & 0 \end{bmatrix}\begin{bmatrix} a_1 & 1 \\ 1 & 0 \end{bmatrix} \cdots \begin{bmatrix} a_n & 1 \\ 1 & 0 \end{bmatrix}.\]
    Then by the $n$-th convergents $p_n/q_n\approx y/x$, we have $A_n=\begin{bmatrix} p_n & p_{n-1} \\ q_n & q_{n-1} \end{bmatrix}$ for $n\geq 0$. These convergents verify $|p_n-(y/x)q_n|\leq|q_n|^{-1}$ for all $n\geq 0$. Thus we have
    \[A_n^{-1}\begin{bmatrix} x \\ y \end{bmatrix}=\pm x\cdot\begin{bmatrix} p_{n-1}-\frac{y}{x}q_{n-1} \\ p_n-\frac{y}{x}q_n \end{bmatrix}\to 0,\]
    as $n\to\infty$. Now for each $A_n$, there is a unique index $1\leq i(n)\leq 6$ such that $g_{i(n)}A_n^{-1}\in G(2)$, and we still have $g_{i(n)}A_n^{-1}.(x,y)\to(0,0)$. Choose $g=g_{i(n)}A_n^{-1}$, with $n\gg 0$, for the goal.
    
    (b) We know that $p=x/\delta$ and $q=y/\delta$ are relatively prime integers. With $r,s\in\Z$ such that $ps-qr=1$, $g=\begin{bmatrix} p & r \\ q & s \end{bmatrix}^{-1}\in\mathsf{SL}_2(\Z)\subset G$ will verify $g.(x,y)=(\delta,0)$.
    
    (c) As the $G$-orbit of $(x,y)$ passes $(\delta,0)$, the $G(2)$-orbit of $(x,y)$ passes $\{g_i.(\delta,0) : 1\leq i\leq 6\}=\{(\pm\delta,0),(0,\pm\delta),(\pm\delta,\mp\delta)\}$.
\end{proof}

We focus on the group $G(2)$ in Lemma~\ref{lem:GL2Z-orbit-plane}, because $G(2)/\langle\pm I_2\rangle\subset\mathsf{PGL}_2(\Z)$ is precisely the image of the map $j\colon\Gamma_{ABCD}\to\mathsf{PGL}_2(\Z)$ in Proposition~\ref{lem:PGL2Z-Vieta-group-identification}.

\subsection{Proof of Proposition~\ref{lem:quadratic-reduction-result}}

Suppose otherwise: that is, we have a point $x$ in the union $\bigcup_{i=1}^3Sk(X_i^2)$ of quadratic adjacency loci such that for every $\gamma\in\Gamma_{ABCD}$, $\trop(\gamma)(x)$ is on the interior of some quadratic adjacency locus. In that case, the formulae \eqref{eqn:quad-trop(s1)} to \eqref{eqn:quad-trop(s3)} apply for any point $\trop(\gamma)(x)$ in the tropical orbit.

Thanks to Lemma~\ref{lem:cone-parametrization-is-equivariant}, this implies that we can evaluate $\trop(\gamma)(x)$ as follows. Let $\vec{u}=\ConeParam^{-1}(x)$. Then by the group map $j\colon\Gamma_{ABCD}\to\mathsf{PGL}_2(\Z)$ in Proposition~\ref{lem:PGL2Z-Vieta-group-identification},
\[\trop(\gamma)(x)=\ConeParam\left(j(\gamma).\vec{u}\right).\]
Taking the norm, we have $\|\trop(\gamma)(x)\|_1=\|j(\gamma).\vec{u}\|$ in addition.

If $\gcd(\vec{u})=0$, then by Lemma~\ref{lem:GL2Z-orbit-plane}(a), $\|j(\gamma).\vec{u}\|=\|\trop(\gamma)(x)\|_1$ can be arbitrarily small. But since $\min\{a,b,c,d\}<0$, we have $(0,0,0)\notin Sk(a,b,c,d)$, thanks to Proposition~\ref{lem:holomorphic-same-skeletons}. Thus there exists $\epsilon>0$ such that, every $x\in Sk(a,b,c,d)$ verifies $\|x\|_1\geq\epsilon$. In particular, $\|\trop(\gamma)(x)\|_1\geq\epsilon$ for all $\gamma\in\Gamma_{ABCD}$; contradiction.

If $\delta=\gcd(\vec{u})>0$, then by Lemma~\ref{lem:GL2Z-orbit-plane}(c), we can choose $j(\gamma)\in G(2)/\langle\pm I_2\rangle$ so that $j(\gamma).\vec{u}$ takes one of the following forms: $\pm(\delta,0)$, $\pm(0,\delta)$, or $\pm(\delta,-\delta)$. Taking $\ConeParam$, we have $\ConeParam(j(\gamma).\vec{u})=\trop(\gamma)(x)$ equal to $(-\delta,0,-\delta)$, $(0,-\delta,-\delta)$, or $(-\delta,-\delta,0)$. Since each verifies $2x_1=2x_3<2x_2$, $2x_2=2x_3<2x_1$, or $2x_1=2x_2<2x_3$ respectively, none of them can be the interior of a quadratic adjacency locus; contradiction.

\subsection{A Greedy Algorithm}
\label{sec:greedy-algorithm}

Although our proof of Proposition~\ref{lem:quadratic-reduction-result} is non-constructive, here is a ``greedy'' method to find a working $\gamma\in\Gamma_{ABCD}$.
\begin{lemma}
    \label{lem:contraction-norm}
    For each $i=1,2,3$, we have $\|\trop(s_i)(x)\|_1\leq\|x\|_1$ whenever $x\in Sk(X_i^2)$. The inequality is strict if $x$ is in the interior of $Sk(X_i^2)$.
\end{lemma}
\begin{proof}
    Let $i=1$; the others are dealt similar. Let $x=(x_1,x_2,x_3)\in Sk(X_1^2)$, and let $x_1'$ be such that $\trop(s_1)(x)=(x_1',x_2,x_3)$. We have
    \begin{align*}
        x_1+x_2+x_3=2x_1 &\leq \min\left\{2x_2,2x_3,a+x_1,b+x_2,c+x_3,d\right\} \tag{$\ast$} \\
        &\leq \min\left\{2x_2,2x_3,b+x_2,c+x_3,d\right\}=x_1+x_1'.
    \end{align*}
    Hence we have $x_1\leq x_1'$, which yields $x_1+x_2+x_3\leq x_1'+x_2+x_3$. If we translate this in terms of 1-norms, we have $\|x\|_1\geq\|\trop(s_1)(x)\|_1$.
    
    If $x$ is in the interior of $Sk(X_1^2)$, then we have a strict inequality at ($\ast$). So $x_1<x_1'$.
\end{proof}

The greedy method is this: whenever we have a point in the interior of $Sk(X_i^2)$, we apply $\trop(s_i)$ and see if it still lies on the interior of a quadratic adjacency locus. If not, then we are done; otherwise, we apply another $\trop(s_j)$ to reduce further. One can demonstrate that this method yields the desired $\gamma\in\Gamma_{ABCD}$ in Proposition~\ref{lem:quadratic-reduction-result}, but the proof is mostly wrestling with the Euclidean algorithm that the approach implicitly follows. In addition, the key step---that the method terminates---relies on a proof by contradiction that is essentially no different from the proof presented above.

\section{The Ping-Pong Theory}
\label{sec:pp-theory}

Recall the group
\[
\tag{\ref{eqn:inf-inf-inf-reflection-group}}
\Gamma=\langle\sigma_1,\sigma_2,\sigma_3\mid\sigma_1^2=\sigma_2^2=\sigma_3^2=1\rangle=(\Z/2\Z)^{\ast 3}.
\]
This group is isomorphic to the group $\Gamma_{ABCD}$ and the tropicalized group $\trop(\Gamma_{ABCD})$, thanks to the structure discovered in Proposition~\ref{lem:cayley-domain-description} plus the ping-pong lemma (see \S{\ref{sec:ping-pong-lemma}}).

But instead of letting the ping-pong lemma to classify the group $\trop(\Gamma_{ABCD})$, we may use the lemma to classify the skeleton $Sk(a,b,c,d)$ as a $\Gamma$-space (topological space with a continuous $\Gamma$-action). We introduce the notion of a \emph{ping-pong structure}, aimed to be a tool to compare two $\Gamma$-spaces. The main comparison result is Proposition~\ref{thm:ping-pong-table} below, and this will be the key fact to understand the tropical dynamics of the skeleton with meromorphic parameters.

\subsection{A Recall from Ping-Pong Lemma}

There are some details in the proof of the ping-pong lemma (Theorem~\ref{thm:ping-pong-lemma}) that we would like to remark and generalize, so we post a proof below.

Recall the hypotheses: $X$ is a set with a $\Gamma$-action, and $X_1,X_2,X_3\subset X$ are disjoint nonempty subsets such that $\sigma_i.X_j\subset X_i$ holds whenever $i\neq j$.

\begin{proof}[Proof of Theorem~\ref{thm:ping-pong-lemma}]
    Let $w\in\Gamma$ be a nontrivial reduced word, so that $w=\sigma_{i(n)}\sigma_{i(n-1)}\cdots\sigma_{i(1)}$, where each $i(j)\in\{1,2,3\}$ and $i(j+1)\neq i(j)$ for $j=1,\ldots,n-1$. Let $w$ be cyclically reduced, i.e., $n=1$ or $\sigma_{i(n)}\neq\sigma_{i(1)}$. If otherwise, we conjugate $w$ by $\sigma_{i(1)}$ to remove $\sigma_{i(n)}$ and $\sigma_{i(1)}$ from the first and last of $w$.

    If $n=1$ then we may assume that $w=\sigma_1$. If $w$ acts on any point $x\in X_2$, then we have $w.x\in X_1$, and as $X_1$ and $X_2$ are disjoint, $w.x\neq x$. So $w$ acts nontrivially.

    If $n>1$, then as $\sigma_{i(n)}\neq\sigma_{i(1)}$, we have $i(n)\neq i(1)$ as well. Reindexing the indices if necessary, we may assume that $i(n)=2$ and $i(1)=1$. Pick any point $x\in X_3$. Then one can inductively show that $\sigma_{i(j)}\sigma_{i(j-1)}\cdots\sigma_{i(1)}.x\in X_{i(j)}$. It follows that
    \[w.x=\sigma_{i(n)}\sigma_{i(n-1)}\cdots\sigma_{i(1)}.x\in X_{i(n)}=X_2,\]
    and as $X_2$ and $X_3$ are disjoint, we have $w.x\neq x$. So $w$ acts nontrivially.
\end{proof}

The following comes from the idea of showing $\sigma_{i(j)}\cdots\sigma_{i(1)}.x\in X_{i(j)}$ in the proof above. Recall that a $\Gamma$-space is a topological space with a continuous $\Gamma$-action.
\begin{corollary}
    \label{lem:ping-pong-general}
    Suppose $X$ is a $\Gamma$-space. Let $X_1$, $X_2$, and $X_3$ be disjoint nonempty subsets of $X$ such that $\sigma_i.X_j\subset X_i$ whenever $i\neq j$. Suppose $\sigma_i$ maps $X_i$ onto $X\setminus\overline{X}_i$, for $i=1,2,3$. Then for a reduced word $\sigma_{i(n)}\cdots\sigma_{i(1)}\in\Gamma$, we have
    \begin{align}
        \sigma_{i(n)}\cdots\sigma_{i(1)}.(X\setminus X_{i(1)}) &\subset\overline{X}_{i(n)}, \label{eqn:ping-pong-general-0} \\
        \sigma_{i(n)}\cdots\sigma_{i(1)}.(X\setminus\overline{X}_{i(1)}) &\subset X_{i(n)}. \label{eqn:ping-pong-general-1}
    \end{align}
\end{corollary}
\begin{proof}
    This follows from $\sigma_i.(X\setminus\overline{X}_i)=X_i$, $\sigma_i.(X\setminus X_i)=\overline{X}_i$, $\sigma_i.X_j\subset X_i$, and $\sigma_i.\overline{X}_j\subset\overline{X}_i$, together with induction.
\end{proof}

\subsection{The Ping-pong Structure}

Motivated by the statement of the ping-pong lemma as in Theorem~\ref{thm:ping-pong-lemma}, we introduce the following notion.

\begin{definition}[Ping-pong structure]
    \label{def:ping-pong-structure}
    Let $X$ be a $\Gamma$-space. Suppose we have disjoint open subsets $X_1,X_2,X_3\subset X$ such that
    \begin{enumerate}[(i)]
        \item $\sigma_j.X_i\subset X_j$ whenever $i\neq j$,
        \item $\sigma_i$ fixes the boundary $\partial X_i$, pointwise, and
        \item $\sigma_i$ sends $X_i$ onto $X\setminus\overline{X}_i$.
    \end{enumerate}
    Then we say the triple $(X_1,X_2,X_3)$ the \emph{ping-pong structure} on $X$.
\end{definition}

In general, the boundaries $\partial X_i$ may have points with complicated stabilizers. Comparing them is essential when we compare two $\Gamma$-spaces with ping-pong structures.

\begin{definition}[Fat, Thin]
    Let $(X_1,X_2,X_3)$ be a ping-pong structure on $\Sigma\ACTS X$. Define $L_i=\partial X_i\setminus\bigcup_{j\neq i}\overline{X}_j$ and $X_0=(X\setminus\bigcup_{i=1}^3\overline{X}_i)\cup\bigcup_{i=1}^3 L_i$. We say the ping-pong structure is \emph{fat} if $X_0$ has a nonempty interior. Otherwise, we say the structure is \emph{thin}.
    
    For a fat ping-pong structure, we say $X_0$ is the \emph{ping-pong table} and sets $L_i$'s are \emph{ping-pong nets}. We may denote a fat ping-pong structure with its table, by $(X_1,X_2,X_3;X_0)$.
\end{definition}

Figure \ref{fig:fat-ping-pong-structure} sketches a fat ping-pong structure. Figure \ref{fig:thin-ping-pong-structure} sketches a thin ping-pong structure on a circle.

Note that for fat ping-pong structures, we can recover the nets $L_i$ by $L_i=X_0\cap\overline{X}_i$. So it is not necessary to indicate the nets in the description of a fat ping-pong space.

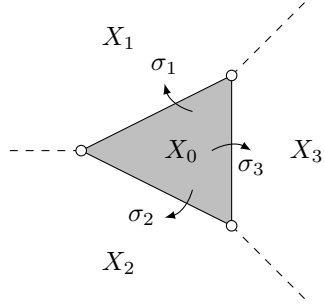
\begin{figure}
    \centering
    \begin{tikzpicture}
        \draw[dashed] (0,1) -- (1,2);
        \draw[dashed] (0,-1) -- (1,-2);
        \draw[dashed] (-2,0) -- (-3,0);
        
        \fill[gray,opacity=.5] (-2,0) -- (0,1) -- (0,-1) -- (-2,0);
        %\draw[domain=-3:0] plot ({\x},{exp(\x)});
        %\draw[domain=-3:0] plot ({\x},{-exp(\x)});
        \draw (-2,0) -- (0,-1)--(0,1) -- cycle;
        \draw[fill=white] (0,1) circle (.07);
        \draw[fill=white] (0,-1) circle (.07);
        \draw[fill=white] (-2,0) circle (.07);
        \node at ({-2/3},0) {$X_0$};

        \node at (1,0) {$X_3$};
        \node at (-1.5,1.5) {$X_1$};
        \node at (-1.5,-1.5) {$X_2$};
        \foreach \i in {.75em}{
            \draw[-latex] (-\i,0) to[bend left] (\i,0) node[below] {$\sigma_3$};
            \draw[-latex] (-.707cm+.707*\i,.707cm-.707*\i) to[bend left] (-.707cm-.707*\i,.707cm+.707*\i) node[above] {$\sigma_1$};
            \draw[-latex] (-.707cm+.707*\i,-.707cm+.707*\i) to[bend left] (-.707cm-.707*\i,-.707cm-.707*\i) node[left] {$\sigma_2$};
        }
    \end{tikzpicture}
    \caption{A sketch of a fat ping-pong structure.} \label{fig:fat-ping-pong-structure}
\end{figure}
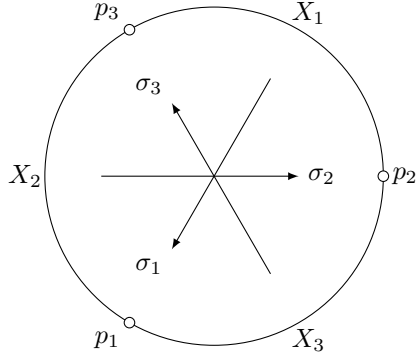
\begin{figure}
    \centering
    \begin{tikzpicture}[scale=.75]
        \draw (0,0) circle (3);
        \draw[fill=white] (0:3) circle ({.07/.75}) node[right] {$p_2$};
        \draw[fill=white] (120:3) circle ({.07/.75}) node[above left] {$p_3$};
        \draw[fill=white] (240:3) circle ({.07/.75}) node[below left] {$p_1$};
        \node at (60:3cm+1em) {$X_1$};
        \node at (180:3cm+1em) {$X_2$};
        \node at (-60:3cm+1em) {$X_3$};
        \draw[-latex] (60:2) -- (60:-1.5) node[below left] {$\sigma_1$};
        \draw[-latex] (180:2) -- (180:-1.5) node[right] {$\sigma_2$};
        \draw[-latex] (-60:2) -- (-60:-1.5) node[above left] {$\sigma_3$};
    \end{tikzpicture}
    \caption{A sketch of a thin ping-pong structure.} \label{fig:thin-ping-pong-structure}
\end{figure}

For the tropical dynamics on a skeleton, it naturally carries a ping-pong structure, and the fat-thin dichotomy corresponds to the meromorphic-holomorphic dichotomy. Since the thin case is already addressed in Theorem~\ref{thm:tropical-dynamics-holomorphic-parameters} above, here we only focus on fat ping-pong structures.
\begin{lemma}
    \label{lem:skeletal-pp-structure}
    Let $X=Sk(a,b,c,d)$ and let $D_1$, $D_2$, and $D_3$ be the domains as in Proposition~\ref{lem:cayley-domain-description}. Then $(D_1,D_2,D_3)$ is a ping-pong structure. It is fat if and only if $\min\{a,b,c,d\}<0$.
\end{lemma}
\begin{proof}
    Proposition~\ref{lem:cayley-domain-description}, parts (i), (ii), and (iv) show that $(D_1,D_2,D_3)$ is a ping-pong structure. It is fat if and only if $X\setminus\bigcup_{i=1}^3\overline{D}_i$ is nonempty, if and only if $\min\{a,b,c,d\}<0$, by part (vi).
\end{proof}

\subsection{Comparing Fat Ping-pong Structures}

Here is the main comparison result.

\begin{proposition}[Comparing Fat Ping-pong Structures]
    \label{thm:ping-pong-table}
    Let $X,Y$ be $\Gamma$-spaces with fat ping-pong strutures $(X_1,X_2,X_3;X_0)$ and $(Y_1,Y_2,Y_3;Y_0)$ respectively. Any continuous map $\psi\colon X_0\to Y_0$ that respects ping-pong nets, i.e., that $\psi(X_0\cap\overline{X}_i)\subset Y_0\cap\overline{Y}_i$, extends $\Gamma$-equivariantly to a continuous map $\Psi\colon\Gamma.X_0\to\Gamma.Y_0$.
\end{proposition}

\begin{lemma}
    \label{lem:cayley-observations}
    Suppose $X$ is a $\Gamma$-space with a fat ping-pong structure $(X_1,X_2,X_3;X_0)$. Any point of $X_0$ has the stabilizer group which is either trivial or one of the subgroup $\{1,\sigma_i\}$. The subset $X_0\cap\overline{X}_i$ is precisely the set of points $\in X_0$ that have the stabilizer $\{1,\sigma_i\}$.

    Furthermore, if $x_0,x_1\in X_0$ and $g,g'\in\Gamma$ has $g.x_0=g'.x_1$, we have $x_0=x_1$ and $g^{-1}g'$ is in the stabilizer of $x_0$.
\end{lemma}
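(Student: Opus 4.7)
The plan is to derive everything from a single sublemma and then read off the three claims. The sublemma I want first is: if $x\in X_0$ and $w\in\Sigma$ is a nontrivial reduced word $\sigma_{i(n)}\cdots\sigma_{i(1)}$, then $w.x\in X_0$ can only occur in the single case that $x\in L_j$ for some $j$ and $w=\sigma_j$, and in that case $w.x=x$ by axiom (ii) of a ping-pong structure. The two inputs I need for this are the ping-pong inclusion \eqref{eqn:ping-pong-general-1}, which sends the complement of $\overline{X}_{i(1)}$ into $X_{i(n)}$, together with the immediate disjointness $L_j\cap\overline{X}_k=\varnothing$ whenever $j\neq k$ (by definition of $L_j$) and the disjointness $X_j\cap X_0=\varnothing$ (since $X_j$ is open, the pieces $X\setminus\bigcup_i\overline{X}_i$ and $L_j$ both miss $X_j$, and they exhaust $X_0$).

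To prove the sublemma I would case-split on where $x$ sits. If $x\in X\setminus\bigcup_i\overline{X}_i$, then $x\notin\overline{X}_{i(1)}$, so \eqref{eqn:ping-pong-general-1} gives $w.x\in X_{i(n)}$, which is disjoint from $X_0$. If $x\in L_j$ with $i(1)\neq j$, then again $x\notin\overline{X}_{i(1)}$ and the same conclusion holds. If $x\in L_j$ with $i(1)=j$, use $\sigma_j.x=x$ to rewrite $w.x=w'.x$ where $w'=\sigma_{i(n)}\cdots\sigma_{i(2)}$; this $w'$ is still reduced and has leading letter $\sigma_{i(2)}\neq \sigma_j$, so either $w'=1$ (i.e.\ $w=\sigma_j$, the exceptional case) or the previous sub-case applies to $w'$ and forces $w'.x=w.x$ out of $X_0$.

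Given the sublemma, the three assertions of the lemma are immediate. For the first two, applying the sublemma to the fixing equation $w.x=x$ shows that the stabilizer of $x\in X_0$ is $\{1\}$ if $x\in X\setminus\bigcup_i\overline{X}_i$ and $\{1,\sigma_j\}$ if $x\in L_j$; and since $L_j=X_0\cap\overline{X}_j$ by the definition of $L_j$ and of $X_0$, this matches the characterization of $X_0\cap\overline{X}_j$ by stabilizer type. For the third, setting $h=g^{-1}g'$ one has $h.x_1=x_0\in X_0$, so by the sublemma either $h=1$ (giving $x_0=x_1$) or $x_1\in L_j$ and $h=\sigma_j$, in which case $h$ fixes $x_1$, so again $x_0=x_1$ and $h\in\Stab(x_0)$.

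The main obstacle is purely combinatorial: making sure that the word $w'=\sigma_{i(n)}\cdots\sigma_{i(2)}$ obtained after absorbing $\sigma_{i(1)}=\sigma_j$ really is reduced and has leading letter different from $\sigma_j$, so that the induction/sub-case argument closes cleanly. This uses only the standard normal form in the free product $(\Z/2\Z)^{\ast 3}$, namely that a reduced word has $i(k+1)\neq i(k)$ at every step, which guarantees that the leading letter of $w'$ is $\sigma_{i(2)}\neq\sigma_j$ and no further cancellation occurs. Beyond this bookkeeping, the argument is a direct application of the ping-pong inclusions already established in the paper.
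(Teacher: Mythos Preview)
Your proposal is correct and follows essentially the same approach as the paper: both arguments case-split on whether $x$ lies in $X_0\setminus\bigcup_i\overline{X}_i$ or in some $L_j=X_0\cap\overline{X}_j$, apply the ping-pong inclusion \eqref{eqn:ping-pong-general-1} when the rightmost letter of the reduced word differs from $\sigma_j$, and absorb $\sigma_j$ into $x$ otherwise. The only difference is organizational---you isolate the key sublemma (that $w.x\in X_0$ forces $w\in\Stab(x)$) explicitly at the start, whereas the paper computes the stabilizers first and then remarks that the same argument yields this stronger fact before deducing the last claim.
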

\begin{proof}
    Suppose $x\in X_0\setminus\bigcup_{i=1}^3\overline{X}_i$. Then for any nontrivial reduced $g=\sigma_{i(n)}\cdots\sigma_{i(1)}\in\Sigma$, because $x\in X\setminus\overline{X}_{i(1)}$, we have $g.x\in X_{i(n)}$ by ping-pong \eqref{eqn:ping-pong-general-1}. Hence $g.x\notin X_0$, so $g$ cannot stabilize $x$.

    Suppose $x\in X_0\cap\overline{X}_i$. We may assume that $i=1$. Suppose $g=\sigma_{i(n)}\cdots\sigma_{i(1)}\in\Sigma$ is nontrivial, reduced, and $i(1)\neq 1$. Then because $x\in X\setminus\overline{X}_{i(1)}$, we have $g.x\in X_{i(n)}\subset X\setminus X_0$ by ping-pong \eqref{eqn:ping-pong-general-1}. So $g$ cannot stabilize $x$ unless $i(1)=1$. But even if $i(1)=1$, unless $g=\sigma_{i(1)}$, $g\sigma^{-1}_{i(1)}.x\notin X_0$ by the same reason above. Thus the stabilizer of $x$ is $\{1,\sigma_1\}$.

    The above argument also shows that $g.x\notin X_0$ if $g$ is not in the stabilizer of $x$. Hence if $x_0=g^{-1}g'.x_1$ for some $x_0,x_1\in X_0$, we must have $g^{-1}g'$ in the stabilizer of $x_1$. Therefore $x_0=g^{-1}g'.x_1=x_1$ follows.
\end{proof}

\begin{lemma}
    \label{lem:cayley-neighborhoods}
    Suppose $X$ is a $\Gamma$-space with a fat ping-pong structure $(X_1,X_2,X_3;X_0)$. The set $X_0\cup\sigma_i.X_0$ is a neighborhood of the ping-pong net $L_i=X_0\cap\overline{X}_i$.
\end{lemma}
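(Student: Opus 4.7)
My plan is to show that for each $x \in L_i$, one can find an open neighborhood $U$ of $x$ satisfying $U \subset X_0 \cup \sigma_i.X_0$, by shrinking a naive neighborhood twice: once to avoid the other closed half-spaces, once using continuity of $\sigma_i$ to control where $X_i$-points get sent.

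\textbf{Setup.} Fix $x \in L_i = \partial X_i \setminus \bigcup_{j \neq i} \overline{X}_j$. Since each $\overline{X}_j$ is closed and does not contain $x$ for $j \neq i$, there is an open neighborhood $U$ of $x$ with $U \cap \overline{X}_j = \varnothing$ for all $j \neq i$. Next, since $\sigma_i$ fixes $\partial X_i$ pointwise (condition (ii) of Definition \ref{def:ping-pong-structure}) and $x \in L_i \subset \partial X_i$, we have $\sigma_i(x)=x$; by continuity of the $\Sigma$-action there is a neighborhood $V$ of $x$ with $\sigma_i(V) \subset U$. Set $U' = U \cap V$.

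\textbf{Case analysis on $y \in U'$.} The three sets $X_i$, $\partial X_i$, and $X \setminus \overline{X}_i$ partition $X$, so exactly one case applies.
\begin{itemize}
    \item If $y \in X \setminus \overline{X}_i$, then from $y \in U$ we get $y \notin \overline{X}_j$ for every $j \neq i$ and also $y \notin \overline{X}_i$, hence $y \in X \setminus \bigcup_j \overline{X}_j \subset X_0$.
    \item If $y \in \partial X_i$, then again $y \in U$ rules out $y \in \overline{X}_j$ for $j \neq i$, so $y \in L_i \subset X_0$.
    \item If $y \in X_i$, then by condition (iii), $\sigma_i(y) \in \sigma_i(X_i) = X \setminus \overline{X}_i$; combined with $\sigma_i(y) \in U$ (choice of $V$) this gives $\sigma_i(y) \notin \overline{X}_j$ for every $j$, so $\sigma_i(y) \in X_0$, i.e., $y \in \sigma_i.X_0$.
\end{itemize}
In every case $y \in X_0 \cup \sigma_i.X_0$, so $U' \subset X_0 \cup \sigma_i.X_0$, and since $x$ was arbitrary in $L_i$, the set $X_0 \cup \sigma_i.X_0$ is a neighborhood of $L_i$.

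\textbf{Where the subtlety lies.} The only nontrivial case is $y \in X_i$: a priori, $\sigma_i(y)$ could land in some $X_j$ with $j \neq i$, which would place it outside $X_0$. The key observation is that $\sigma_i$ is continuous and fixes $x$, so points of $X_i$ near $x$ get reflected to points near $x$, and near $x$ no $\overline{X}_j$ with $j \neq i$ appears; this is precisely what the second shrinking step $V$ encodes. No other properties of the ping-pong structure beyond Definition \ref{def:ping-pong-structure} are needed.
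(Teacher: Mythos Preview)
Your proof is correct and takes essentially the same approach as the paper. The paper exhibits a single global open neighborhood $G\setminus F$ of $L_i$ (with $G=X\setminus\bigcup_{j\neq i}\overline{X}_j$ and $F=X_i\setminus\sigma_i.X_0^\circ$) and checks it lies in $X_0\cup\sigma_i.X_0$, while you carry out the same two-step shrinking pointwise; taking the maximal choices $U=G$ and $V=\sigma_i^{-1}(G)=\sigma_i.G$ in your construction gives $U'=G\cap\sigma_i.G=G\setminus F$, exactly the paper's set.
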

\begin{proof}
    Let $i=1$, without loss of generality. We know that $X\setminus\bigcup_{i=1}^3\overline{X}_i$ is the interior of $X_0$, and we denote that set by $X_0^\circ$. The set $G=X_1\cup L_1\cup X_0^\circ$ then equals to $X\setminus(\overline{X}_2\cup\overline{X}_3)$ and is thus an open neighborhood of $L_1$.

    Next, the set $F=X_1\setminus\sigma_1.X_0^\circ$ is a closed subset of $X_1$ which is a subset of $\sigma_1.(\overline{X}_2\cup\overline{X}_3)$. Since $\sigma_1.L_1=L_1$ is disjoint with $\sigma_1.(\overline{X}_2\cup\overline{X}_3)$, the set $G\setminus F$ is an open neighborhood of $L_1$. We evaluate
    \begin{align*}
        G\setminus F &= (X_1\cup L_1\cup X_0^\circ)\setminus(X_1\setminus\sigma_1.X_0^\circ) \\
        &= L_1\cup X_0^\circ\cup\sigma_1.X_0^\circ,
    \end{align*}
    and we see that this is a subset of $X_0\cup\sigma_1.X_0$. The claim is shown.
\end{proof}

\begin{lemma}
    \label{lem:action-map-is-open}
    Suppose $X$ is a $\Gamma$-space with a fat ping-pong structure $(X_1,X_2,X_3;X_0)$. The action map $A\colon\Gamma\times X_0\to\Gamma.X_0$ is a quotient map.
\end{lemma}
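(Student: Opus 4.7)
The plan is to verify the universal property of quotient maps directly. Continuity of $A$ is automatic since $\Sigma$ carries the discrete topology and each $\sigma\in\Sigma$ acts continuously on $X$; surjectivity onto $\Sigma.X_0$ is tautological. So the task reduces to showing that whenever $U\subset\Sigma.X_0$ satisfies $A^{-1}(U)$ open in $\Sigma\times X_0$, the set $U$ is open in $\Sigma.X_0$ (regarded as a subspace of $X$). Since $A^{-1}(U)=\bigsqcup_\sigma\{\sigma\}\times(\sigma^{-1}.U\cap X_0)$, this hypothesis rephrases as: for every $\sigma\in\Sigma$, the set $U\cap\sigma.X_0$ is open in $\sigma.X_0$.

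Fix $y\in U$. I would write $y=\tau.x$ with $x\in X_0$ and seek an open $V\subset X$ containing $y$ with $V\cap\Sigma.X_0\subset U$. By Lemma \ref{lem:cayley-observations} the pair $(\tau,x)$ is essentially unique: $x$ is determined, and $\tau$ is unique up to the stabilizer of $x$, which is $\{1\}$ on $X_0^\circ=X\setminus\bigcup_i\overline{X}_i$ and $\{1,\sigma_i\}$ on each net $L_i$. This dichotomy splits the argument into two cases.

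If $x\in X_0^\circ$, the set $\tau.X_0^\circ$ is open in $X$ (since $X_0^\circ$ is), contains $y$, and is a subset of $\tau.X_0$. Intersecting it with an open $W\subset X$ realizing the openness of $U\cap\tau.X_0$ in $\tau.X_0$ (so that $W\cap\tau.X_0\subset U$) immediately produces $V$. If instead $x\in L_i$, then the proof of Lemma \ref{lem:cayley-neighborhoods} furnishes an open neighborhood $U_i = L_i\cup X_0^\circ\cup\sigma_i.X_0^\circ$ of $L_i$ in $X$, so $\tau.U_i$ is an open neighborhood of $y$. The key move is that $\sigma_i$ fixes $x$ pointwise, hence $y=\tau\sigma_i.x$ lies in $\tau\sigma_i.X_0$ as well as in $\tau.X_0$; the hypothesis applied to both $\sigma=\tau$ and $\sigma=\tau\sigma_i$ supplies open sets $W_1,W_2\subset X$ around $y$ with $W_1\cap\tau.X_0\subset U$ and $W_2\cap\tau\sigma_i.X_0\subset U$. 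Put $V=\tau.U_i\cap W_1\cap W_2$: any $z\in V\cap\Sigma.X_0$ factors as $\tau.z'$ with $z'\in U_i$, and the three possibilities for $z'$ (in $X_0^\circ$, $L_i$, or $\sigma_i.X_0^\circ$) place $z$ into either $W_1\cap\tau.X_0$ or $W_2\cap\tau\sigma_i.X_0$, hence into $U$ in either case.

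The main obstacle is precisely the boundary case $x\in L_i$: such a $y$ sits on the overlap of two $\Sigma$-translates of $X_0$, and any $X$-neighborhood of $y$ inside $\Sigma.X_0$ reaches into both. The resolution leans on the ping-pong axiom that $\sigma_i$ fixes $L_i$ pointwise, which lets us apply the openness hypothesis to two different translates of $X_0$ meeting at $y$, together with the explicit two-sided neighborhood from Lemma \ref{lem:cayley-neighborhoods}.
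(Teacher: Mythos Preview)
Your proof is correct and follows essentially the same approach as the paper's: both reduce to the two cases $x\in X_0^\circ$ and $x\in L_i$, and both rely on Lemma~\ref{lem:cayley-observations} to identify the stabilizers and on Lemma~\ref{lem:cayley-neighborhoods} to supply the two-sided neighborhood $X_0^\circ\cup L_i\cup\sigma_i.X_0^\circ$ in the boundary case. The only difference is cosmetic: in the boundary case the paper argues by contradiction that $U'\cup\sigma_1.U'$ is a neighborhood of $x_0$, while you construct the neighborhood $V=\tau.U_i\cap W_1\cap W_2$ directly; your version is arguably a bit cleaner, but the content is the same.
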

\begin{proof}
    Suppose $U\subset\Gamma.X_0$ is a subset whose preimage $A^{-1}(U)\subset\Gamma\times X_0$ is open. We aim to show that $U\subset\Gamma.X_0$ is open.

    Note that $X_0\setminus\bigcup_{i=1}^3\overline{X}_i$ is precisely the interior $X_0^\circ$ of $X_0$. By Lemma \ref{lem:cayley-observations}, if we restrict $A$ to $\Gamma\times X_0^\circ$, then the action map is a homeomorphism onto the image $\Gamma.X_0^\circ$. Thus if $U\subset\Gamma.X_0^\circ$, $A^{-1}(U)$ is open iff $U$ is open.

    Suppose $U$ intersects $\Gamma.(X_0\setminus X_0^\circ)$ at $g.x_0$. Switching to $g^{-1}.U$ if necessary, we may assume $g=1$. From $X_0\setminus X_0^\circ=\bigcup_{i=1}^3(X_0\cap\overline{X}_i)$, we may assume $x_0\in\overline{X}_1$. We aim to show that $U$ is a neighborhood of $x_0$ (in $\Gamma.X_0$).

    Because $A^{-1}(x_0)=\{(1,x_0),(\sigma_1,x_0)\}$ by Lemma \ref{lem:cayley-observations}, we see that the open set $A^{-1}(U)$ is a neighborhood of this preimage. Thus there is a neighborhood $U'$ of $x_0$ in $X_0$ such that $\{1,\sigma_1\}\times U'\subset A^{-1}(U)$. It follows that $U'\cup\sigma_1.U'\subset U$. Hence it suffices to see if $U'\cup\sigma_1.U'$ is a neighborhood of $x_0$ in $\Gamma.X_0$ (to prove that $U$ itself is open).

    Suppose otherwise. Recall that $X_0\cup\sigma_1.X_0$ is a neighborhood of $x_0$ in $X$ and in $\Gamma.X_0$ (Lemma \ref{lem:cayley-neighborhoods}). For every neighborhood $V\subset X_0\cup\sigma_1.X_0$ of $x_0$ in $\Gamma.X_0$, $\sigma_1.V$ is also a neighborhood of $x_0$. Thus $V\cap\sigma_1.V$ is a neighborhood of $x_0$ as well. Because $x_0$ is not on the interior of $U'\cup\sigma_1.U'$ in $\Gamma.X_0$ we have an element $y_V\in V\cap\sigma_1.V$ such that $y_V\notin U'\cup\sigma_1.U'$. Here, we may set $y_V\in X_0$, by replacing $y_V\leftarrow\sigma_1.y_V$ if necessary. But as $U'$ is a neighborhood of $x_0$ in $X_0$, $y_V\in U'$ if $V$ gets small enough. This contradicts.
\end{proof}

We now prove Proposition~\ref{thm:ping-pong-table}.
\begin{proof}
    Denote the action maps by $A\colon\Gamma\times X_0\to\Gamma.X_0$ and by $A'\colon\Gamma\times Y_0\to\Gamma.Y_0$, which are quotient maps. We aim to build a map $\Psi\colon\Gamma.X_0\to\Gamma.Y_0$ which fits into the following commutative diagram:
    \[\begin{tikzcd}
        \Gamma\times X_0 \arrow[d, "\mathrm{Id}\times\psi"'] \arrow[r, "A"] & \Gamma.X_0 \arrow[d, "\exists!\Psi", dashed] \\
        \Gamma\times Y_0 \arrow[r, "A'"] & \Gamma.Y_0
    \end{tikzcd}\]
    
    Because $A$ is a quotient map, if we know
    \[A(g,x_0)=A(g',x_1)\quad\text{ implies }\quad A'(g,\psi(x_0))=A'(g',\psi(x_1)),\]
    then the map $\Psi$ is constructed by the universal property of quotient maps. Suppose $A(g,x_0)=A(g',x_1)$, i.e., $g.x_0=g'.x_1$. By Lemma \ref{lem:cayley-observations}, we have $x_0=x_1$ and $g^{-1}g'$ stabilizes $x_0$. Because $\psi$ respects ping-pong nets, $g^{-1}g'$ also fixes $\psi(x_0)$. Therefore $A'(g,\psi(x_0))=A'(g',\psi(x_1))$.
\end{proof}

\section{The Case of Meromorphic Parameters}
\label{sec:meromorphic-case}

We are now ready to prove the second half of our main Theorem~\ref{thm:A}: see Theorem~\ref{lem:one-meromorphic-model}.

\subsection{A Model Fat Ping-pong Structure: Hyperbolic Plane}

We introduce the following ping-pong structure on $\Hh^2=\{z\in\C : \Im z>0\}$ the upper half hyperbolic plane. Define the maps $r_1,r_2,r_3\colon\Hh^2\to\Hh^2$ as
\begin{equation}
    \label{eqn:hyperbolic-triangle-reflection}
    r_1(z)=\frac{\bar{z}}{2\bar{z}-1},\quad r_2(z)=2-\bar{z},\quad r_3(z)=-\bar{z},
\end{equation}
where $\bar{z}$ is the complex conjugate. Each defines an involution which is an isometry in hyperbolic metrics, i.e., a hyperbolic reflection. The reflections $r_1,r_2,r_3$ can be found in $\mathsf{PSL}_2^\pm(\R)=\mathsf{SL}_2^\pm(\R)/\langle\pm I_2\rangle$ (where $\mathsf{SL}_2^\pm(\R)$ is the set of $2\times 2$ real matrices of determinant $\pm 1$) and generates the subgroup that equals to
\[\ker(\mathsf{PGL}_2(\Z)\to\mathsf{PGL}_2(\Z/2\Z)).\]
This subgroup is isomorphic to $\Gamma=(\Z/2\Z)^{\ast 3}$, thanks to the ping-pong structure they exhibit.

\begin{proposition}[Hyperbolic ping-pong structure]
    \label{lem:hyperbolic-ping-pong-structure}
    Let $\Gamma\ACTS\Hh^2$ by $\sigma_i.z=r_i(z)$. This action admits a ping-pong structure with
    \begin{align*}
        H_1 &= \{z\in\Hh^2 : |2z-1|<1\}, \\
        H_2 &= \{z\in\Hh^2 : \Re z>1\}, \\
        H_3 &= \{z\in\Hh^2 : \Re z<0\}.
    \end{align*}
    The ping-pong table $H_0$ is topologically a closed disk minus three points on the boundary. (See Figure \ref{fig:hyperbolic-ping-pong-structure}.)
\end{proposition}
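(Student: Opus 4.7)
The plan is to verify the three axioms of Definition \ref{def:ping-pong-structure} directly, then read off the topology of $H_0$.

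First I would identify each $r_i$ as a hyperbolic reflection across the corresponding geodesic: $r_1(z)=2-\bar z$ is Euclidean reflection across the vertical geodesic $\{\Re z=1\}$, $r_3(z)=-\bar z$ is reflection across $\{\Re z=0\}$, and $r_2(z)=\bar z/(2\bar z-1)$ is inversion through the semicircular geodesic $\{|2z-1|=1\}$ with ideal endpoints $0,1$. Because a reflection is an involution preserving $\Hh^2$, the assignment $\sigma_i\mapsto r_i$ is well-defined as a $\Sigma$-action, and axioms (ii) and (iii) (fixing $\partial H_i$ pointwise, and swapping the two half-planes it bounds) are immediate from the nature of a reflection.

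For axiom (i), I would dispatch the six inclusions $r_j(H_i)\subset H_j$, $i\ne j$, in three groups. For $r_1$: $H_2$ lies in the disk $|2z-1|<1$, which forces $\Re z<1$, and $H_3\subset\{\Re z<0\}\subset\{\Re z<1\}$; since $\Re(2-\bar w)=2-\Re w>1$ when $\Re w<1$, both images land in $H_1$. For $r_3$: both $H_1$ and $H_2$ lie in $\{\Re z>0\}$, which $r_3$ maps into $\{\Re z<0\}=H_3$. For $r_2$: the key identity
\[|2r_2(w)-1|=\left|\tfrac{2\bar w-(2\bar w-1)}{2\bar w-1}\right|=\tfrac{1}{|2w-1|}\]
reduces the inclusion $r_2(H_1\cup H_3)\subset H_2$ to the inequality $|2w-1|>1$ on $H_1\cup H_3$, which follows from $\Re(2w-1)>1$ on $H_1$ and $\Re(2w-1)<-1$ on $H_3$ together with $|z|\ge|\Re z|$.

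To identify the table, I would observe that $\partial H_1,\partial H_2,\partial H_3$ are pairwise disjoint geodesics in $\Hh^2$ (their Euclidean closures meet only at the ideal vertices $0,1,\infty\in\partial\Hh^2$), and they bound the ideal triangle $T$ with those vertices; the $H_i$ are precisely the three complementary ideal half-planes. Hence $\Hh^2\setminus(H_1\cup H_2\cup H_3)=T$, each net $L_i=\partial H_i\setminus\bigcup_{j\ne i}\overline H_j$ equals the full geodesic $\partial H_i$, and therefore $H_0=T$. As a subset of $\Hh^2$, a closed ideal triangle is the standard model of a closed disk with three points on its boundary removed, matching the stated picture in Figure \ref{fig:hyperbolic-ping-pong-structure}.

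There is no substantive obstacle here; the entire argument is elementary complex analysis plus the classical picture of an ideal triangle in $\Hh^2$. The one spot where a small calculation is genuinely useful is the identity $|2r_2(w)-1|=1/|2w-1|$, which cleanly reduces both $r_2$-inclusions to a single half-plane inequality.
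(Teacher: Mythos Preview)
Your proposal is correct and is exactly the kind of routine verification the paper intends: the paper's own proof is literally the single word ``Computations.'' Your explicit treatment of the six inclusions, in particular the identity $|2r_2(w)-1|=1/|2w-1|$, and the identification of $H_0$ with the closed ideal triangle, is a faithful fleshing-out of that.
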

\begin{proof} Computations. \end{proof}

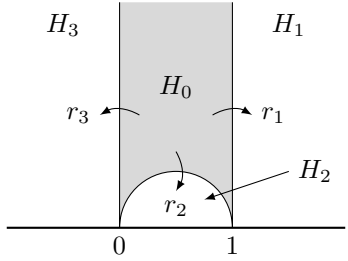
\begin{figure}
    \centering
    \begin{tikzpicture}[scale=1.5]
        \fill[gray,opacity=.3] (0,2) -- (0,0) arc (0:180:.5) -- (-1,2) -- cycle;
        \draw[thick] (-2,0)--(1,0);
        \draw (0,2) -- (0,0) arc (0:180:.5) -- (-1,2);
        \node[below] at (-1,0) {0};
        \node[below] at (0,0) {1};
        %\node[above] at (-.5,2) {$\infty$};
        \node at (-1.5,1.8) {$H_3$};
        \node at (.5,1.8) {$H_2$};
        \draw[-latex] (.5,.5) node[right] {$H_1$} to (-.25,.25);
        
        \foreach \i in {.5em}{
        \draw[-latex] (-\i,1) to[bend left] (\i,1) node[right] {$r_2$};
        \draw[-latex] (-1cm+\i,1) to[bend right] (-1cm-\i,1) node[left] {$r_3$};
        \draw[-latex] (-.5,.5cm+\i) to[bend left] (-.5,.5cm-\i) node[below] {$r_1$};
        }

        \node at (-.5,1.25) {$H_0$};
    \end{tikzpicture}
    \caption{The ping-pong structure described in Proposition \ref{lem:hyperbolic-ping-pong-structure}.} \label{fig:hyperbolic-ping-pong-structure}
\end{figure}

\subsection{Shapes of Fat Skeletal Ping-pong Tables}

To connect our tropical ping-pong structure on the skeleton, in Lemma~\ref{lem:skeletal-pp-structure}, to the hyperbolic one, in Proposition~\ref{lem:hyperbolic-ping-pong-structure}, it suffices to demonstrate the following

\begin{lemma}
    \label{lem:fat-skeletal-pp-table}
    If $\min\{a,b,c,d\}<0$, then for the fat ping-pong structure $(D_1,D_2,D_3)$ of $X=Sk(a,b,c,d)$ as in Lemma~\ref{lem:skeletal-pp-structure}, its ping-pong table $D_0$ is topologically a closed disk minus three points on the boundary.
\end{lemma}
\begin{proof}
    Each ping-pong net $L_i=\partial D_i\setminus\bigcup_{j\neq i}\partial D_j$ is a line ($\cong\R$) minus two half-rays pointing distinct directions, by Proposition~\ref{lem:cayley-domain-description}(v). Hence each $L_i$ is an open interval; this is nondegenerate if $\min\{a,b,c,d\}<0$ (see Proposition~\ref{lem:intersection-of-fixed-sets}(c), with $w=0$). Moreover, we see that $\overline{L}_i\cap\overline{L}_j$ is a point, if $i\neq j$. Thus, if we join the intervals $\overline{L}_i$'s, we see that they form a circle, bounding $\overline{D}_0$. Thus the closure of the ping-pong table $\overline{D}_0$ is topologically a disk with boundary.

    So we ask how many points are in $\overline{D}_0\setminus D_0$. Such point corresponds to boundary rays. These rays start from the boundary of $D_0$ but does not intersect $D_0$ (or, `got removed' by the definition of $D_0$). Hence $\overline{D}_0\setminus D_0$ has 3 points, all found at the boundary. This verifies the topological description.
\end{proof}

So, together with Proposition~\ref{thm:ping-pong-table} above, we conclude with the following

\begin{theorem}[Tropical dynamics for meromorphic parameters]
    \label{lem:one-meromorphic-model}
    In the skeleton $Sk(a,b,c,d)$ with $\min\{a,b,c,d\}<0$, the $\Gamma_{ABCD}$-orbit $U=\Gamma_{ABCD}.D_0$ of the ping-pong table $D_0$ is an open dense subset, while $Sk(a,b,c,d)\setminus U$ is a countable union of rays (affine-linear images of $\R_{\geq 0}$). Furthermore, by the group isomorphism $j\colon\Gamma_{ABCD}\to\langle r_1,r_2,r_3\rangle$, $s_i\mapsto r_i$ in Proposition~\ref{lem:PGL2Z-Vieta-group-identification}, we have an $j$-equivariant homeomorphism $U\to\Hh^2$.
\end{theorem}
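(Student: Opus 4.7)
\medskip

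\noindent\textbf{Proof plan.} I will split the statement into three tasks: (i) $U$ is open; (ii) $Sk(a,b,c,d)\setminus U$ is a countable collection of rays (from which density of $U$ follows); (iii) construct the $f$-equivariant homeomorphism $U\to\Hh^2$ via the fat ping-pong comparison theorem.

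First, for openness, I will apply Lemma \ref{lem:cayley-neighborhoods} to the fat ping-pong structure $(D_1,D_2,D_3;D_0)$ established in Lemma \ref{lem:topological-description-of-ping-pong-table}: for each net $L_i$, the set $D_0\cup s_i.D_0$ is a neighborhood of $L_i$, and the interior $D_0\setminus\bigcup_{j}\overline{D}_j$ is already open in $Sk(a,b,c,d)$. Together these show every point of $D_0$ lies in the interior of $U=\Gamma.D_0$, and since $\Gamma$ acts by homeomorphisms, $U$ is open.

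Next, for the complement, I will invoke Theorem \ref{lem:meromorphic-reduction}: every $x\in Sk(a,b,c,d)$ admits some $g\in\Gamma$ with $g.x$ either in a subquadratic cell or on a boundary ray $\mathcal{C}(X_i^2)\cap\mathcal{C}(X_{i+1}^2)$. In the first case, at most one further reflection pushes $g.x$ out of any $D_j$ (since $s_j$ fixes the relevant subquadratic cell $\mathcal{C}(A_jX_j)$ setwise, and $\mathcal{C}(D)$ avoids all $D_j$'s entirely), landing it in $D_0$, so $x\in U$. In the second case, $x$ lies in the $\Gamma$-orbit of one of the three boundary rays. Since boundary rays are affine-linear copies of $\R_{\geq 0}$ and $\Gamma$ is countable, $Sk(a,b,c,d)\setminus U$ sits inside a countable union of rays. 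Each such ray is nowhere dense in the $2$-manifold $Sk(a,b,c,d)$, so the Baire category theorem gives that $U$ is dense.

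Finally, for the identification with $\Hh^2$, the key inputs are Lemma \ref{lem:topological-description-of-ping-pong-table} and Proposition \ref{lem:hyperbolic-ping-pong-structure}, which describe both $D_0$ and $H_0$ as closed topological disks with three boundary points deleted, the three ping-pong nets appearing as three arcs on the boundary circle in matching cyclic order. Elementary planar topology gives a homeomorphism $\psi\colon D_0\to H_0$ taking $L_i(D_0)$ to $L_i(H_0)$ for each $i$. Applying Theorem \ref{thm:ping-pong-table} to $\psi$ yields a $\Sigma$-equivariant continuous extension $\Psi\colon U=\Gamma.D_0\to\Sigma.H_0$; the target equals all of $\Hh^2$ by the standard fact that the $(\infty,\infty,\infty)$-ideal triangle $H_0$ tessellates $\Hh^2$ under $\langle r_1,r_2,r_3\rangle$. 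Applying the same theorem to $\psi^{-1}$ produces a continuous $\Sigma$-equivariant inverse, so $\Psi$ is a homeomorphism, and it is $f$-equivariant by construction.

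\emph{Expected main obstacle.} The most delicate step is ensuring that $\psi$ is chosen so that the three deleted boundary points (corresponding, in $D_0$, to the three boundary rays of $Sk(a,b,c,d)$, and in $H_0$ to the three ideal vertices in $\partial\Hh^2$) are matched consistently with the cyclic order of the nets; a mismatch here would produce a well-defined equivariant map under Theorem \ref{thm:ping-pong-table} that still fails to be a global homeomorphism. Fixing a coherent labelling of nets before constructing $\psi$ settles this. A minor subtlety is verifying that the "at most one further reflection" step in task (ii) actually terminates in $D_0$ — this follows because $D_j\cap\mathcal{C}(A_jX_j)$ and its $s_j$-image exhaust $\mathcal{C}(A_jX_j)$, with the latter meeting no $\overline{D}_k$ for $k\neq j$.
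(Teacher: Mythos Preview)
Your proposal is correct and follows the same route as the paper: openness of $U$ via the neighborhood structure of $D_0$ (the paper packages this as $D'=D_0\cup\bigcup_i s_i.D_0^\circ$ being open), the equivariant homeomorphism $U\to\Hh^2$ by applying Theorem~\ref{thm:ping-pong-table} to a net-respecting homeomorphism $\psi\colon D_0\to H_0$ and to $\psi^{-1}$, and the description of the complement via Theorem~\ref{lem:meromorphic-reduction} together with Baire. Your closing ``minor subtlety''---that $s_j.(D_j\cap\mathcal{C}(A_jX_j))$ meets no $\overline{D}_k$ for $k\neq j$---is not literally correct on cell boundaries (since $\overline{D}_k\supset\mathcal{C}(X_k^2)$, which can touch $\partial\mathcal{C}(A_jX_j)$), but the paper is equally terse at this step, simply asserting that $U$ contains all subquadratic cells except the three boundary-ray endpoints.
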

\begin{proof}
    Because the ping-pong tables, $D_0$ and $H_0$, are homeomorphic, by Proposition~\ref{thm:ping-pong-table} we can extend this homeomorphism to $U\cong\Hh^2$, $\Gamma_{ABCD}$-equivariantly. This is an open subset of the skeleton $Sk(a,b,c,d)$ because $D'=D_0\cup\bigcup_{i=1}^3s_i.D_0^\circ$ is open and $U=\Gamma_{ABCD}.D'$.

    To prove that $Sk(a,b,c,d)\setminus U$ is a countable union of rays, note first that $U$ contains all subquadratic cells except three points (that comes from boundary rays). From Proposition~\ref{lem:quadratic-reduction-result}, we see that all points of $Sk(a,b,c,d)\setminus U$ must be on a $\Gamma_{ABCD}$-orbit of some boundary ray, thus forms a countable union of rays. Furthermore, it must be a meager set, so by Baire category theorem, $U$ must be dense.
\end{proof}

\iffalse

The machinery developed above is by no means optimal, and we expect a room to improve descriptions for models of Vieta actions. Some expected behaviors are stated as follows.
\begin{question}
    \begin{enumerate}
        \item Does the $\Sigma$-space isomorphism $U\cong\Hh^2$ extend to a semiconjugacy $Sk(a,b,c,d)\to\Hh^2\cup\partial\Hh^2$?
        \item Do we have a $\Sigma$-space isomorphism between level sets $\{f_0=w\}$, as long as $w<-\min(a,b,c,\frac12d)$?
    \end{enumerate}
\end{question}
The current machinery, especially Lemma \ref{lem:action-map-is-open}, does not extend well in a way that will readily resolve the above questions. It seems like we need a way to $\Sigma$-equivariantly compare our skeleta with the space obtained by first adding rational boundary points $\Q\Pp^1\subset\partial\Hh^2$ to $\Hh^2$, then blowing up these points to rays.

\fi

\subsection{Some Words on the Exception Set}

In Theorem~\ref{lem:one-meromorphic-model}, the complement of $U$ is not engaged in the comparison with the hyperbolic plane. This complement consists of many rays, which rather should be compared with boundary points $\in\partial\Hh^2$ of the hyperbolic plane. Define the \emph{exception set} as the set of such rays.

Exception set always lie on the union $\bigcup_{i=1}^3Sk(X_i^2)$ of quadratic adjacency loci, and hence points on there appears as $\ConeParam(\vec{u})$ by some $\vec{u}\in\R^2/\langle\pm1\rangle$. The following method determines from which boundary ray the exception point originates.
\begin{proposition}
    Recall the numbers $\Theta_1=\min\{0,\frac12a,b,c,\frac12d\}$, $\Theta_2=\min\{0,a,\frac12b,c,\frac12d\}$, and $\Theta_3=\min\{0,a,b,\frac12c,\frac12d\}$. Let $x\in\bigcup_{i=1}^3Sk(X_i^2)$ and $\pm(u,v)=\ConeParam^{-1}(x)$. Suppose $\delta=\gcd(u,v)>0$. Then the point $x$ is on the $\Gamma_{ABCD}$-orbit of 
    \begin{enumerate}[(a)]
        \item $Sk(X_2^2)\cap Sk(X_3^2)$ if and only if $\delta\geq\Theta_1$ and $(u/\delta,v/\delta)\equiv(0,1)\pmod{2}$;
        \item $Sk(X_1^2)\cap Sk(X_3^2)$ if and only if $\delta\geq\Theta_2$ and $(u/\delta,v/\delta)\equiv(1,0)\pmod{2}$;
        \item $Sk(X_1^2)\cap Sk(X_2^2)$ if and only if $\delta\geq\Theta_3$ and $(u/\delta,v/\delta)\equiv(1,1)\pmod{2}$.
    \end{enumerate}
\end{proposition}
\begin{proof}
    We do not present the proof in details but just mention that it is a combination of Lemma~\ref{lem:quadratic-cells-basics}(c), Lemma~\ref{lem:cone-parametrization-is-equivariant}, and Lemma~\ref{lem:GL2Z-orbit-plane}(c), plus that for any $g\in\mathsf{PGL}_2(\Z)$ such that $g\equiv I_2\pmod{2}$, we have $g.(x,y)\equiv(x,y)\pmod{2}$ for $x,y\in\Z$.
\end{proof}

\subsection{Intersection of Tropical Domains}

Another application of Theorem~\ref{lem:one-meromorphic-model} is the following openness of intersections of tropical domains.
\begin{lemma}
    \label{lem:tropical-domain-open}
    The intersection $\bigcap_{\gamma\in\Gamma_{ABCD}}\bigcap_{i=1}^3\gamma.\mathrm{dom}(s_i)$ of tropical domains is open if $\min\{a,b,c,d\}<0$.
\end{lemma}
\begin{proof}
    Let $T_0=\bigcap_{i=1}^3\mathrm{dom}(s_i)$, $T=\bigcap_{\gamma\in\Gamma_{ABCD}}\gamma.T_0$, and $Sk=Sk(a,b,c,d)$. Note first that $Sk\setminus T_0$ consists of
    \begin{itemize}
        \item the boundary of each adjacency locus,
        \item line segments consisting of $Sk(AX_1)\setminus\mathrm{dom}(s_1)$; similar complements with respect to $\trop(s_2)$ and $\trop(s_3)$; and
        \item $\trop(s_1)$-images of the boundaries $\partial Sk(X_2^2)$, $\partial Sk(X_3^2)$, $\partial Sk(BX_2)$, $\partial Sk(CX_3)$, and $\partial Sk(D)$; similar images with respect to $\trop(s_2)$ and $\trop(s_3)$.
    \end{itemize}
    Let $L$ be the union of first two, so that $L\subset Sk\setminus T_0$, and $Sk\setminus T_0\subset L\cup\bigcup_{i=1}^3s_i.L$.
    
    We claim that $Sk\setminus T=\bigcup_{\gamma\in\Gamma_{ABCD}}\gamma.L$. It is clear that $\bigcup_{\gamma\in\Gamma_{ABCD}}\gamma.L\subset Sk\setminus T$. For the other direction, suppose $x\in Sk\setminus T$. Then there exists $\gamma\in\Gamma_{ABCD}$ such that $\gamma.x\in Sk\setminus T_0$. As $Sk\setminus T_0\subset L\cup\bigcup_{i=1}^3s_i.L$, we have $\gamma'\in\Gamma_{ABCD}$ such that $\gamma'.x\in L$.

    Observe also that $T$ excludes the exception set in $Sk$. So letting $U\subset Sk$ be as in Theorem~\ref{lem:one-meromorphic-model}, we have $T\subset U$. Let $D_0$ be the ping-pong table on $Sk$.     
    
    We claim that $D_0\setminus T=D_0\cap(L\cup\bigcup_{i=1}^3s_i.L)$. As $D_0$ and its images never intersect the $\Gamma_{ABCD}$-orbit of boundary rays, any intersection of the form $\gamma.D_0\cap L$ is occuring on (a) the boundary of a subquadratic adjacency locus or (b) line segments in a linear adjacency locus. But since the only $\gamma.D_0$ that can intersect subquadratic adjacency locus are those with $\gamma=1,s_1,s_2,s_3$, we see that $\gamma.D_0\cap L=\varnothing$ if $\gamma\notin\{1,s_1,s_2,s_3\}$. Hence
    \begin{align*}
        D_0\setminus T &= \bigcup_{\gamma\in\Gamma_{ABCD}}D_0\cap(\gamma.L)=\bigcup_{\gamma\in\Gamma_{ABCD}}\gamma.(\gamma^{-1}.D_0\cap L) \\
        &= \bigcup\{\gamma.(\gamma^{-1}.D_0\cap L) : \gamma=1,s_1,s_2,s_3\} \\
        &= D_0\cap(L\cup s_1.L\cup s_2.L\cup s_3.L).
    \end{align*}
    In particular, $D_0\setminus T$ is closed in $U$.

    Now $U\setminus T=\bigcup_{\gamma\in\Gamma_{ABCD}}\gamma.(D_0\setminus T)$ is a locally finite union of closed subsets of $U$, hence closed. This suffices to show that $T\subset U$ is open, thus $T\subset Sk$ is open too.
\end{proof}

\section{Application: Non-archimedean Fatou Domains}
\label{sec:nA-Fatou}

Let $(K,\val)$ be a complete, non-archimedean algebraically closed field.

\begin{theorem}
    \label{thm:p-adic-fatou-domain}
    Suppose $S_{ABCD}$ is defined over $(K,\val)$ with meromorphic parameters. Then $S_{ABCD}(K)$ has a Fatou domain. That is, there is a nonempty open subset $U\subset S_{ABCD}(K)$ such that, for every sequence $(g_n)_{n=1}^\infty$ in $\Gamma_{ABCD}$, there is a subsequence $(g_{n_k})_{n=1}^\infty$ such that for every compact $V\Subset U$, the restriction $g_{n_k}|_V$ is either uniformly Cauchy or converging to infinity.
\end{theorem}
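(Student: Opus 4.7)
The plan is to build a polydisc neighborhood $U \subset S_{ABCD}(\C_p)$ around a lifted point of $\mathcal{C}(D)^\circ$ and verify the Fatou dichotomy on $U$ by combining a tropical--algebraic compatibility (of the kind alluded to in Lemma \ref{lem:tropical-and-physical} in the introduction) with the ping-pong picture of Proposition \ref{lem:cayley-domain-description}. First, choose a tropical point $\mathbf{x}^\star \in \mathcal{C}(D)^\circ$ with rational coordinates (possible since $\val_p(\C_p^\times) = \Q$ is dense and $\mathcal{C}(D)^\circ$ is open by hypothesis); Kapranov's theorem then produces $P \in S_{ABCD}(\C_p)$ with $\val_p(P) = \mathbf{x}^\star$. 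Take $U$ to be a closed polydisc centered at $P$ of sufficiently small $p$-adic radii to force $\val_p(Q) = \mathbf{x}^\star$ uniformly for every $Q \in U$.

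The key analytic ingredient is the statement: \emph{if $\val_p(Q) = \mathbf{x}$ lies in the interior of a cell $\mathcal{C}(m(X))$, then no cancellation of tropical monomials occurs in the fractional form of $s_i(Q)$, and consequently $\val_p(s_i(Q)) = \trop(s_i)(\mathbf{x})$.} Since $\mathbf{x}^\star \in \mathcal{C}(D)^\circ$, one application of $\trop(s_i)$ lands in $\mathcal{C}(X_i^2)^\circ$, and following any reduced word $g = s_{i(n)} \cdots s_{i(1)} \in \Gamma$ through the tropical ping-pong structure keeps the orbit threading interiors of cells. Iterating the key statement along $g$ yields $\val_p(g(Q)) = \trop(g)(\mathbf{x}^\star)$ uniformly for $Q \in U$, where $\trop(g)$ denotes $\trop(s_{i(n)}) \cdots \trop(s_{i(1)})$; indeed, each image $g(U)$ is again a polydisc of constant valuation equal to $\trop(g)(\mathbf{x}^\star)$, so the non-cancellation condition propagates step by step. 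Moreover, by Proposition \ref{lem:greedy-nature} and the analysis of the map $\euc$ in Lemma \ref{lem:mock-vieta-involutions}, the skeleton norm $\|\trop(g)(\mathbf{x}^\star)\|_{Sk}$ grows without bound as $|g| \to \infty$: the orbit of a $\mathcal{C}(D)^\circ$-point under nontrivial reduced words escapes every bounded region of the skeleton.

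This reduces the Fatou dichotomy to an elementary case analysis. Given any sequence $(g_n)$ in $\Gamma$: if $\sup_n |g_n| < \infty$, then $(g_n)$ takes only finitely many values and a constant subsequence is trivially uniformly Cauchy on each compact $V \Subset U$; otherwise, a subsequence with $|g_{n_k}| \to \infty$ has the property that each coordinate of $\val_p(g_{n_k}(Q))$ tends to $-\infty$ uniformly in $Q \in U$, so for every compact $K \Subset S_{ABCD}(\C_p)$ and every compact $V \Subset U$, the image $g_{n_k}(V)$ eventually avoids $K$, which is exactly convergence to infinity in the Rebelo--Roeder sense.

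The main obstacle will be ensuring uniformity of the tropical--algebraic compatibility over the whole $\Gamma$-orbit of $U$, rather than merely pointwise. Pointwise non-cancellation in $s_i(Q)$ is immediate from the cell-interior condition on $\val_p(Q)$; but propagating through arbitrarily long reduced words requires that $g(U)$ remain a constant-valuation polydisc whose valuation sits inside a new cell interior at every stage. This is inductively tractable once one observes that $s_i$ sends a constant-valuation polydisc (whose tropicalization is in one cell interior) to another such polydisc (whose tropicalization is in another cell interior), provided the next tropical step is again a cell interior---and Proposition \ref{lem:cayley-domain-description} together with the ping-pong dynamics guarantees exactly this, so the induction closes cleanly.
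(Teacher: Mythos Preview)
Your proposal follows essentially the same architecture as the paper: lift a rational point of $\mathcal{C}(D)^\circ$ via Kapranov, take a constant-valuation polydisc $U$, prove inductively that $\val_p(g.Q)=\trop(g)(\mathbf{x}^\star)$ for all $Q\in U$ (the paper's Lemma~\ref{lem:tropical-and-physical}), and reduce the Fatou dichotomy to a tropical escape-to-infinity statement (the paper's Proposition~\ref{lem:tropical-normal-behavior}).

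The one substantive difference is how you justify the tropical escape. The paper first extracts a postfix-pattern subsequence (Lemma~\ref{lem:postfix-subsequence}) and then analyzes the resulting matrix products (Lemmas~\ref{lem:tropical-convergence-to-infinity}--\ref{lem:tropical-convergence-to-infinity-1}). You instead assert directly that $\|\trop(g)(\mathbf{x}^\star)\|_{Sk}\to\infty$ as $|g|\to\infty$, which is stronger and makes the argument cleaner. The assertion is true, but the references you give---Proposition~\ref{lem:greedy-nature} and Lemma~\ref{lem:mock-vieta-involutions}---concern the norm-\emph{decreasing} greedy direction and do not by themselves yield unbounded growth in the reverse direction (they give only $\|\trop(g)(\mathbf{x}^\star)\|_{Sk}\geq\|\mathbf{x}^\star\|_{Sk}$). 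A correct justification uses the forward transition matrices $T_{\pm1}$ of Proposition~\ref{lem:symmetric-coordinate-vieta-involution}: one checks that applying $T_{\pm1}$ to $(u_1,u_2)\in(0,\infty)^2$ increases the $1$-norm by exactly $\min(u_1,u_2)$ and never decreases $\min(u_1,u_2)$, so after $n$ steps the norm has grown by at least $n\cdot\min(u_1^\star,u_2^\star)>0$.

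One further imprecision: it is not true that \emph{each} coordinate of $\val_p(g_{n_k}(Q))$ tends to $-\infty$; along parabolic-type words one coordinate can stay bounded. What is guaranteed is that the $i(n_k)$-th coordinate equals $-\tfrac12\|g_{n_k}.\mathbf{x}^\star\|_{Sk}\to-\infty$, and that already forces $g_{n_k}(V)$ to miss any fixed compact $K\Subset S_{ABCD}(\C_p)$.
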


\subsection{Context and Outline}

Recall that, over the complex numbers, the \emph{Fatou domain} $\mathcal{F}_{ABCD}$ is the open subset
\[\mathcal{F}_{ABCD}=\left\{p\in S_{ABCD}(\C) : \begin{array}{l}\text{there is an open subset }U\subset S_{ABCD}(\C) \\ \text{with $p\in U$, in which $\{g|_U\}_{g\in\Gamma}$ is normal}\end{array}\right\},\]
where $\Gamma=\langle s_1,s_2,s_3\rangle$. 
Here, by `normal,' we include the case when we have a subsequence `converging to infinity.' To elaborate, a sequence $(g_n|_U)_{n=1}^\infty$, where each $g_n\in\Gamma$, is said to \emph{converge to infinity} if for every compact $V\Subset U$ and $K\Subset S_{ABCD}(\C)$, we have $g_n(V)\cap K\neq\varnothing$ for finitely many $n$'s \cite[\S{1.5}]{RR21}.

Theorem~\ref{thm:p-adic-fatou-domain} is claiming that, we always have a Fatou domain, of `converging to infinity' type, for any Markov surface over $(K,\val)$ with meromorphic parameters.

%A result of Rebelo and Roeder \cite[Theorem D]{RR21} states that, over $\C$ the parameter $(A,B,C,D)=(0,0,0,4)$ has empty Fatou domain. The above theorem says the opposite for $\C_p$'s, even for $\C_2$.
%One corollary to this is, whenever $D\neq 0$, there is $w_0=w_0(a,b,c,d)$ such that whenever $w>w_0$, $w\in\Q$, the surface $S_{p^{-w}A,p^{-w}B,p^{-w}C,p^{-2w}D}(\C_p)$ has a Fatou domain. The bound $w_0$ is morally a big number. So if $A,B,C,D$ are algebraic, $p^{-w}A,\ldots,p^{-2w}D$ will become \emph{small} complex numbers. In a sense, this will be a $p$-adic analogue of a result of Rebelo and Roeder \cite[Theorem E]{RR21}: if $(A,B,C,D)\approx(0,0,0,0)$ (in the Euclidean sense) then the Fatou domain of $S_{ABCD}(\C)$ is nonempty.

The outline of the proof is as follows. We first prove the normal behavior at the tropicalized level (Proposition \ref{lem:tropical-normal-behavior}). Appealing to the openness of the intersection of tropical domains (Lemma~\ref{lem:tropical-domain-open}), we can lift this normal behavior to algebraic dynamics. (Such a technique is not applicable for the complex case.)

\subsection{Postfix Subsequence} We first make some reductions. If the (reduced) word lengths of $g_n$'s are bounded, then $(g_n)$ has a constant subsequence, falling to the uniformly Cauchy case. Otherwise, a simple diagonal argument shows the following. Here, $|{}\cdot{}|$ is the word metric of $\Gamma=\langle\sigma_1,\sigma_2,\sigma_3\mid\sigma_1^2=\sigma_2^2=\sigma_3^2=1\rangle$.
\begin{lemma}
    \label{lem:postfix-subsequence}
    If a sequence $(g_n)_{n=1}^\infty$ in $\Gamma$ lacks constant subsequences, then there is a left-infinite reduced word $w=\cdots w_3w_2w_1$ and a subsequence $(g_{n_k})_{k=1}^\infty$ such that each $g_{n_k}$ has the form $w'w_k\cdots w_2w_1$, for some reduced word $w'$ of length $|g_{n_k}|-k$.
\end{lemma}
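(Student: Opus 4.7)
The plan is to run a straightforward Kőnig/diagonal argument on the tree of reduced words of the free product $\Gamma \cong (\Z/2\Z)^{\ast 3}$.

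First I will reduce to the case $|g_n| \to \infty$. The set of reduced words of length $\leq N$ in $\Gamma$ is finite for each $N$ (there are at most $3\cdot 2^{N-1}$ reduced words of length exactly $N \geq 1$). So if $(|g_n|)_n$ were bounded, the pigeonhole principle would produce a constant subsequence of $(g_n)$, contradicting the hypothesis. Passing to a subsequence, I may therefore assume $|g_n| \geq n$ for all $n$.

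Next, I build the infinite reduced word $w = \cdots w_3 w_2 w_1$ by successive pigeonholing on suffixes. For each reduced $g \in \Gamma$ with $|g|\geq k$, write $\mathrm{tail}_k(g) \in \{s_1,s_2,s_3\}^k$ for its rightmost $k$ letters. I will recursively construct subsequences $(g^{(k)}_n)_{n \geq 1}$ and letters $w_k \in \{s_1,s_2,s_3\}$ with $w_{k+1}\neq w_k$ such that every term of $(g^{(k)}_n)_n$ has tail $w_k w_{k-1}\cdots w_1$ and $|g^{(k)}_n|\geq k$. Start with $g^{(0)}_n := g_n$. Given $(g^{(k-1)}_n)_n$, the rightmost letter of $g^{(k)}_n$ (for $n$ such that $|g^{(k-1)}_n|\geq k$, which holds for all but finitely many $n$) lies in the finite set $\{s_1,s_2,s_3\}$; by pigeonhole an infinite subsequence $(g^{(k)}_n)_n$ shares a common $k$-th letter from the right, which I define to be $w_k$. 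Because each $g^{(k)}_n$ is reduced, $w_k \neq w_{k-1}$ for $k\geq 2$, so the concatenation $w = \cdots w_3 w_2 w_1$ is a left-infinite reduced word.

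Finally, the diagonal extraction: choose $n_k$ so that $g_{n_k}$ is the $k$-th term of $(g^{(k)}_n)_n$, insisting also that $n_1 < n_2 < \cdots$ (possible since each $(g^{(k)}_n)_n$ is infinite). By construction $\mathrm{tail}_k(g_{n_k}) = w_k\cdots w_2 w_1$, so $g_{n_k} = w' w_k \cdots w_2 w_1$ for some reduced prefix $w'$ (possibly empty) of length $|g_{n_k}|-k$, as required.

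There is no real obstacle here — everything reduces to a Kőnig's lemma style extraction on the $3$-regular-after-the-root Cayley tree of $\Gamma$; the only point worth recording carefully is that the reduced-word condition $w_{k+1}\neq w_k$ is automatic since it is inherited from reducedness of each $g_{n_k}$, which guarantees that the limiting word $w$ is itself a bona fide left-infinite reduced word rather than a string allowing trivial cancellations.
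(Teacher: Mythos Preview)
Your proof is correct and follows essentially the same diagonal/pigeonhole argument as the paper's proof, just spelled out in more detail; the paper's version is a terse three-sentence sketch of the same extraction on right-suffixes. One small wording slip: in your inductive step you write ``the rightmost letter of $g^{(k)}_n$'' when you mean the $k$-th letter from the right, but the intent is clear from the next clause.
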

\begin{proof}
    Choose $w_1$ so that there are infinitely many $g_n$'s whose rightmost alphabet is $w_1$; choose $g_{n_1}$ among such $g_n$'s with the least $n_1$. Choose $w_2w_1$ so that there are infinitely many $g_n$'s that has $w_2w_1$ as its right (reduced) substring; choose $g_{n_2}$ among such $g_n$'s with $n_2>n_1$ and $n_2$ minimal among such. Continue on, to define the (reduced) string $w_k\cdots w_2w_1$ and subsequence elements $g_{n_k}$.
\end{proof}

\subsection{Tropical Normal Behavior} 
We address the ``normal behavior'' on tropicalizations. This requires some involved analysis on, given a left-infinite reduced word $w=\cdots w_3w_2w_1$ of $\Gamma_{ABCD}$, the behavior of norms $\|j(w_n\cdots w_1).\vec{u}\|$ as $n\to\infty$, where $j$ is the group map~\eqref{eqn:PGL2Z-Vieta-group-identification}.

We first state the behavior of norms of interest. Define the matrices
    \begin{align*}
        R_1 = j(s_1) &= \begin{bmatrix} -1 & -2 \\ 0 & 1 \end{bmatrix}, &
        R_2 = j(s_2) &= \begin{bmatrix} 1 & 0 \\ -2 & -1 \end{bmatrix}, &
        R_3 = j(s_3) &= \begin{bmatrix} 1 & 0 \\ 0 & -1 \end{bmatrix}.
    \end{align*}
\begin{definition}
    The \emph{index} of a matrix $R_i$ is the subscript $i$. A vector $\vec{u}\in\R^2/\langle\pm1\rangle$ has \emph{index} $i$ if, by its $\ConeParam$-image $(x_1,x_2,x_3)=\ConeParam(\vec{u})$, we have $2x_i=\min\{2x_1,2x_2,2x_3\}$.
\end{definition}

\begin{remark} The zero vector has 1, 2, 3 as its indices. There may be a nonzero vector that has multiple indices, but for most vectors, it has a unique index. If $x\in\bigcup_{i=1}^3Sk(X_i^2)$, then $\ConeParam^{-1}(x)$ has index $i$ if and only if $x\in Sk(X_i^2)$. \end{remark}

Recall the 1-norm $\|(x_1,x_2,x_3)\|_1=|x_1|+|x_2|+|x_3|$ on $\R^3$ and its $\ConeParam$-pullback norm $\|(u,v)\|=|u|+|v|+|u+v|$ on $\R^2$.

\begin{proposition}
    \label{lem:tropical-normal-behavior-cone-param}
    Suppose $(w_i)_{i=1}^\infty$ is a sequence in $\{R_1,R_2,R_3\}$, with no two adjacent terms being equal, and let $\vec{u}\in\R^2/\langle\pm1\rangle$ be a vector where none of its index is equal to that of $w_1$. Then we have $\|w_n\cdots w_1.\vec{u}\|\to\infty$, as $n\to\infty$.
\end{proposition}
\begin{proof}
    Recall the plane $\Pi=\{(x_1,x_2,x_3) : x_1+x_2+x_3=0\}$. Instead of the usual cone parametrization $\ConeParam\colon\R^2/\langle\pm1\rangle\to\R^3$, we identify $(u,v)\in\R^2\mapsto(u,v,-u-v)\in\Pi$ and have $\ConeParamPi\colon\Pi/\langle\pm1\rangle\to\R^3$, $\pm(x_1,x_2,x_3)=(-|x_1|,-|x_2|,-|x_3|)$. A vector $(x_1,x_2,x_3)\in\Pi$ has index $i$ if the corresponding $(x_1,x_2)\in\R^2$ has index $i$.
    
    The plane $\Pi$ has three coordinates: $x^{(1)}=(x_2,x_3)$, $x^{(2)}=(x_3,x_1)$, and $x^{(3)}=(x_1,x_2)$ (beware the cyclic convention). We use subscripts to access its coordinate, so that $x^{(1)}_2=x_3$, $x^{(2)}_1=x_3$, etc. A vector $\vec{u}\in\Pi/\langle\pm1\rangle$ has index $i$ if and only if its $x^{(i)}$-coordinates lie on the first or third quadrant, i.e., $x^{(i)}_1x^{(i)}_2\geq 0$. With these coordinates, the actions of matrices $R_i\ACTS\R^2$ induced to $\Pi$ are described as follows (using cyclic index conventions, so that $R_0=R_3$ and $x^{(4)}=x^{(1)}$, etc.):
    \begin{align*}
        \begin{bmatrix} x^{(i-1)}_1 \\ x^{(i-1)}_2 \end{bmatrix}=R_{i-1}.\begin{bmatrix} x^{(i)}_1 \\ x^{(i)}_2 \end{bmatrix} &\Leftrightarrow \begin{bmatrix} x^{(i-1)}_1 \\ x^{(i-1)}_2 \end{bmatrix}=\begin{bmatrix} 1 & 1 \\ 1 & 0 \end{bmatrix}\begin{bmatrix} x^{(i)}_1 \\ x^{(i)}_2 \end{bmatrix}, \\
        \begin{bmatrix} x^{(i+1)}_1 \\ x^{(i+1)}_2 \end{bmatrix}=R_{i+1}.\begin{bmatrix} x^{(i)}_1 \\ x^{(i)}_2 \end{bmatrix} &\Leftrightarrow \begin{bmatrix} x^{(i+1)}_1 \\ x^{(i+1)}_2 \end{bmatrix}=\begin{bmatrix} 0 & 1 \\ 1 & 1 \end{bmatrix}\begin{bmatrix} x^{(i)}_1 \\ x^{(i)}_2 \end{bmatrix}.
    \end{align*}
    If we let
    \begin{align*}
        T_- &= \begin{bmatrix} 1 & 1 \\ 1 & 0 \end{bmatrix}, &
        T_+ &= \begin{bmatrix} 0 & 1 \\ 1 & 1 \end{bmatrix}, &
        U &= \begin{bmatrix} 1 & 1 \\ 0 & 1 \end{bmatrix}, &
        I &= \begin{bmatrix} 0 & 1 \\ 1 & 0 \end{bmatrix},
    \end{align*}
    then we describe the action of $R_{i-1}$ ($R_{i+1}$, resp.) on a vector in coordinates $x^{(i)}$ to $T_-.x^{(i)}$ ($T_+.x^{(i)}$, resp.), with coordinates in $x^{(i-1)}$ ($x^{(i+1)}$, resp.). Moreover, $T_-=UI$ and $T_+=IU$.

    This uniform, cyclic description of $R_i$-actions, together with the following notion, enables us to compute $w_n\cdots w_1.\vec{u}$ easily.
    \begin{definition}
        Let $i,j\in\{1,2,3\}$ be distinct indices. Define the \emph{circulation sign} $c(i,j)$ as one of $\pm1$ so that $i\equiv j+c(i,j)\pmod{3}$. That is,
        \[c(i,j)=\begin{cases}+1 & ((i,j)=(1,3),(2,1),(3,2)), \\ -1 &((i,j)=(1,2),(2,3),(3,1)). \end{cases}\]
        For distinct matrices $w,w'\in\{R_1,R_2,R_3\}$, we define $c(w,w')=c(i,j)$, where $i,j$ are respectively indices of $w,w'$. For a matrix $w\in\{R_1,R_2,R_3\}$ and a vector $\vec{u}$ (in $\R^2/\langle\pm1\rangle$ or $\Pi/\langle\pm1\rangle$) where none of its index is equal to that of $w$, then we define $c(w,\vec{u})=c(i,j)$, where $i$ is the index of $w$ and $j$ is the least index of $\vec{u}$.

        For the sequence $(w_i)_{i=1}^\infty$ and a vector $\vec{u}\in\R^2/\langle\pm1\rangle$ as in Proposition~\ref{lem:tropical-normal-behavior-cone-param}, its \emph{induced sequence of circulation signs} is the sequence of 0-th term $c(w_1,\vec{u})$ and $i$-th terms $(-1)^ic(w_{i+1},w_i)$, for $i\geq 1$. Its \emph{repetition data} is the sequence $(a_k)_{k=0}^\infty$ in $\Z_{\geq 0}\cup\{\infty\}$, finite terminating in $\infty$ or infinite without $\infty$, so that the induced sequence repeats $a_0$ plus signs, $a_1$ minus signs, $a_2$ plus signs, etc. (Here, $a_k=0$ only if $k=0$.)
    \end{definition}
    
    For instance, if $(w_i)$ is repeating $R_3$, $R_2$, and $R_1$ and $\vec{u}$ has index 1, then the induced sequence of circulation sign is $-1$, $+1$, $-1$, $+1$, $-1$, \ldots, with the repetition data $(0,1,1,1,\ldots)$.

    \begin{lemma}
    \label{lem:key-tropical-normal-estimate}
        \begin{enumerate}[(a)]
            \item The repetition data is finite if and only if $(w_i)$ eventually repeats only two matrices in $\{R_1,R_2,R_3\}$. In that case, we have a norm estimate $\|w_m\cdots w_1.\vec{u}\|\gtrsim m$.
            \item If the repetition data $(a_k)_{k=0}^\infty$ is infinite, $\vec{u}\in\Pi$ has the least index $i$, and $n\geq 1$, then by $m=\sum_{k=0}^{n-1}a_k$ and $j$ the index of $w_m$, the  $x^{(j)}$-coordinates of the vector $w_m\cdots w_1.\vec{u}$ is
            \begin{equation}
            \label{eqn:euc-coord-expansion}
            \begin{bmatrix} x^{(j)}_1 \\ x^{(j)}_2 \end{bmatrix}=\begin{bmatrix} 0 & 1 \\ 1 & 0 \end{bmatrix}^{m-n}\begin{bmatrix} 0 & 1 \\ 1 & a_{n-1} \end{bmatrix}\cdots\begin{bmatrix} 0 & 1 \\ 1 & a_0 \end{bmatrix}\begin{bmatrix} x^{(i)}_1(\vec{u}) \\ x^{(i)}_2(\vec{u}) \end{bmatrix}.
            \end{equation}
            In particular, we have a norm estimate $\|w_m\cdots w_1.\vec{u}\|\gtrsim(\frac12(1+\sqrt{5}))^n$.
        \end{enumerate}
    \end{lemma}
    \begin{proof}
        (a) If $(w_i)$ repeats two matrices, say $R_1$ and $R_2$, then eventually the sequence $c(w_{i+1},w_i)$ repeats $+1$ and $-1$ in period 2. With the correction signs $(-1)^i$ multiplied, this turns to be a constant sequence in $+1$ or $-1$. Hence one of repetition data must be infinite. Conversely, if a repetition data terminates at $a_n=\infty$, then for any index $m>\sum_{k=0}^{n-1}a_k$, we have $c(w_{m+2},w_{m+1})=-c(w_{m+1},w_m)$. The only possibility to have this is to repeat over two matrices.
        
        For the norm estimate, we note the following ping-pong type
        \begin{lemma}
            \label{lem:251213-2}
            If $\vec{u}\in\R^2/\langle\pm1\rangle$ do not have index $i$, then $R_i.\vec{u}$ has a unique index $i$.
        \end{lemma}
        \begin{proof}
            Consider any skeleton with a holomorphic parameter. Then $\vec{u}$ has a unique (do not have, resp.) index $i$ if and only if $\ConeParam(\vec{u})$ lies on the interior of $Sk(X_i^2)$ ($\ConeParam(\vec{u})\notin Sk(X_i^2)$, resp.).
            
            So if $\ConeParam(\vec{u})\notin Sk(X_i^2)$, then by Proposition~\ref{lem:reflection-cells-atlas}, $\trop(s_i).\ConeParam(\vec{u})=\ConeParam(R_i.\vec{u})\in Sk(X_i^2)$, and it is in the interior because otherwise, $\ConeParam(\vec{u})$ gets fixed by $\trop(s_i)$ and hence $\vec{u}$ has index $i$.
        \end{proof}
        Let $\vec{u}_m:=w_m\cdots w_1.\vec{u}$. For $m>\sum_{k=0}^{n-1}a_k$, we have $\vec{u}_m$ having a unique index, same as that of $w_m$; say it is $1$, for our convenience. Then $w_{m+1}$ has index $2$ or $3$, yet $w_{m+2}$ has index $1$, and so forth. So the $x^{(1)}$-coordinate of $\vec{u}_{m+2}$ is
        \[x^{(1)}(\vec{u}_{m+2})=T_\pm T_\mp.x^{(1)}(\vec{u}_m),\]
        where we choose $T_+T_-=\begin{bmatrix} 1 & 0 \\ 2 & 1 \end{bmatrix}$ if $w_{m+1}$ has index 3 and $T_-T_+=\begin{bmatrix} 1 & 2 \\ 0 & 1 \end{bmatrix}$ if $w_{m+1}$ has index 2. Repeating this multiplication we have
        \[x^{(1)}(\vec{u}_{m+2k}) = \begin{bmatrix} 1 & 0 \\ 2k & 1 \end{bmatrix}x^{(1)}(\vec{u}_m)\]
        if $w_{m+1}$ has index 3 and
        \[x^{(1)}(\vec{u}_{m+2k}) = \begin{bmatrix} 1 & 2k \\ 0 & 1 \end{bmatrix}x^{(1)}(\vec{u}_m)\]
        if $w_{m+1}$ has index 2. Here, we may assume that $x^{(1)}_1(\vec{u}_m)>0$ and $x^{(1)}_2(\vec{u}_m)>0$; then we have the estimate for both cases.
        
        (b) Let $\vec{u}_m:=w_m\cdots w_1.\vec{u}$. Let $i_m$ be the index of $w_m$, and $i_0$ be the least index of $\vec{u}$. Let $A(n)=\sum_{k=0}^{n-1}a_k$, with $A(0)=0$. Let $T(a)=IU^a=\begin{bmatrix} 0 & 1 \\ 1 & a \end{bmatrix}$, so that our formula~\eqref{eqn:euc-coord-expansion} can be simplified as
        \begin{equation}
            \label{eqn:euc-coord-expansion-symbolic}
            x^{(i_{A(n)})}(\vec{u}_{A(n)}) = I^{A(n)-n}T(a_{n-1})\cdots T(a_0)x^{(i_0)}(\vec{u}).
        \end{equation}
        The formula is clear if $n=0$.

        Assume the formula for $n\geq 0$. Then starting from $m=A(n)$, we have $i_{A(n)+1}\equiv i_{A(n)}+(-1)^{n+A(n)}\pmod{3}$, $i_{A(n)+2}\equiv i_{A(n)}-(-1)^{n+A(n)}\pmod{3}$, etc. until we reach $i_{A(n+1)}=i_{a_n+A(n)}$. If we cumulate the matrices of corresponding $w_m$'s, we have
        \[x^{(i_{A(n+1)})}(\vec{u}_{A(n+1)})=Tx^{(i_{A(n)})}(\vec{u}_{A(n)}),\]
        where
        \begin{align*}
            T&=\begin{cases} T_-^{a_{n}\mod 2}(T_+T_-)^{\lfloor a_{n}/2\rfloor} & (n+A(n)\text{ odd}) \\ T_+^{a_{n}\mod 2}(T_-T_+)^{\lfloor a_{n}/2\rfloor} & (n+A(n)\text{ even}) \end{cases} \\
            &= I^{A(n)+n}I^{a_{n}}U^{a_{n}}I^{A(n)+n} \\
            &=I^{A(n+1)-(n+1)}T(a_{n})I^{A(n)-n}.
        \end{align*}
        Combining with the induction hypothesis, we have
        \begin{align*}
        x^{(i_{A(n+1)})}(\vec{u}_{A(n+1)}) &= I^{A(n+1)-(n+1)}T(a_{n})I^{A(n)-n}x^{(i_{A(n)})}(\vec{u}_{A(n)}) \\
        &= I^{A(n+1)-(n+1)}T(a_{n})I^{A(n)-n}\cdot I^{A(n)-n}T(a_{n-1})\cdots T(a_0)x^{(i_0)}(\vec{u}) \\
        &= I^{A(n+1)-(n+1)}T(a_n)T(a_{n-1})\cdots T(a_0) x^{(i_0)}(\vec{u}),
        \end{align*}
        finishing the induction step.
        
        For the norm estimate, note that for the $n$-th convergent $p_n/q_n$ of the continued fraction $\alpha=[a_0;a_1,a_2,\ldots]$, we have $T(a_n)\cdots T(a_0)=\begin{bmatrix} q_{n-1} & p_{n-1} \\ q_n & p_n \end{bmatrix}$. If $n\geq 1$, then entries of this matrix are all positive and has the size bounded below by the $(n-1)$-th term of the Fibonacci sequence $f_0=f_1=1$, $f_n=f_{n-1}+f_{n-2}$. As the coordinates of $x^{(i_0)}(\vec{u})$ can be set to be nonnegative and not all zero, we verify the norm estimate.
    \end{proof}
    The Lemma~\ref{lem:key-tropical-normal-estimate} above is what we need.
\end{proof}

\iffalse
\begin{remark}
    In a sense, this proof is going backwards in what we have seen in Proposition~\ref{lem:quadratic-reduction-result}: starting from $w_n\cdots w_1.\vec{u}$ to $\vec{u}$. In the previous case, we can apply greedy algorithm (see Section~\ref{sec:greedy-algorithm}) to `find' $w_n$, $w_{n-1}$, and so forth, from the initial $w_n\cdots w_1.\vec{u}$, and get a vector of reduced size. But if we are asked whether we \emph{necessarily} start from a big vector along this procedure, it becomes a different story.
\end{remark}
\fi

\begin{proposition}
    \label{lem:tropical-normal-behavior}
    Suppose $\min\{a,b,c,d\}<0$, and let $U\subset Sk(a,b,c,d)$ be the invariant subset as in Theorem~\ref{lem:one-meromorphic-model}. Fix a point $x\in U$.
    
    For any sequence $(g_n)_{n=1}^\infty$ in $\Gamma$, there is a subsequence $(g_{n_k})_{k=1}^\infty$ which is either constant or sending $x$ to infinity: the latter means that the sequence $(g_{n_k}.x)_{k=1}^\infty$ eventually exits any compact subset of the skeleton.
\end{proposition}

\begin{proof} Although we have stated this in terms of the invariant subset $U$, we may assume that our starting point is on the ping-pong table in $Sk(a,b,c,d)$. So assume that $x$ is on the ping-pong table throughout.

By Lemma \ref{lem:postfix-subsequence}, if the sequence lacks constant subsequence, we can fix a left-infinite reduced word $w=\cdots w_3w_2w_1$ and view that $g_n=w^{(n)}w_n\cdots w_1$ for $n\geq 1$, by some word $w^{(n)}$ of length $|g_n|-n$. We aim to show that $g_n.x\to\infty$.

\begin{lemma}[Two-step then away]
\label{lem:two-step-then-away}
    Suppose $\min\{a,b,c,d\}<0$. Let $s_j$ and $s_k$ be distinct elements from $\{s_1,s_2,s_3\}$. Let $x$ be a point on the ping-pong table established in Lemma~\ref{lem:skeletal-pp-structure}. Then we have $s_js_k.x\in Sk(X_j^2)$.
\end{lemma}
\begin{proof}
    Let $D_0$ be the ping-pong table. Let $A_1,A_2,A_3$ respectively denote $A,B,C$. Then by \eqref{eqn:ping-pong-general-0}, we know that $s_js_k.x\in\overline{D}_j\subset Sk(X_j^2)\cup Sk(A_jX_j)$. Now if $s_js_k.x\in Sk(A_jX_j)$, then $s_k.x$ lies on the ping-pong table. Thus $x$ must be fixed by $s_k$, but then it should be on the interior of the adjacency locus $Sk(A_kX_k)$. This implies $s_j=s_k$, a contradiction.
\end{proof}

For each letter $w_0\in\{s_1,s_2,s_3\}$, let $Sk(w_0)$ be $Sk(X_i^2)$ if $w_0=s_i$. Lemma~\ref{lem:two-step-then-away} implies that $w_2w_1.x\in Sk(w_2)$, and as our starting point is on the ping-pong table, we are in the interior of $Sk(w_2)$. Iterating Proposition~\ref{lem:reflection-cells-atlas}, we have $w_n\cdots w_2w_1.x\in Sk(w_n)$ for all $n\geq 2$; they always lie on the interior of $Sk(w_n)$. Then Lemma~\ref{lem:contraction-norm} applies to yield $\|w_n\cdots w_2w_1.x\|_1>\|w_{n-1}\cdots w_1.x\|_1$.

We aim to understand how the norms $\|w_n\cdots w_1.x\|_1$ grows in $n$. As $w_n\cdots w_1.x\in Sk(w_n)$ for all $n\geq 2$, the formulae of $\trop(w_{n+1})$'s in Lemma~\ref{lem:quadratic-cells-basics}(b) apply in each step. Hence, by Lemma~\ref{lem:cone-parametrization-is-equivariant}, with $j$ the group map~\eqref{eqn:PGL2Z-Vieta-group-identification}, we have, for $n\geq 2$,
\[w_n\cdots w_3w_2w_1.x=\ConeParam\left(j(w_n\cdots w_3).\ConeParam^{-1}(w_2w_1.x)\right).\]
So we are reduced to the behavior of norms $\|j(w_n\cdots w_3).\vec{u}\|$ as $n\to\infty$, with $w_2w_1.x=\ConeParam(\vec{u})\notin Sk(w_3)$. This goes to infinity, by Proposition~\ref{lem:tropical-normal-behavior-cone-param}.
\end{proof}

\subsection{Proof of Theorem \ref{thm:p-adic-fatou-domain}}

Recall that $T=\bigcap_{\gamma\in\Gamma_{ABCD}}\bigcap_{i=1}^3\gamma.\mathrm{dom}(s_i)$ the intersection of tropical domains, is open in $Sk(a,b,c,d)$ if $\min\{a,b,c,d\}<0$, by Lemma~\ref{lem:tropical-domain-open}. Since $K$ is algebraically closed, its value group $\val(K^\times)$ is dense, and hence there exists $x\in T$ whose coordinates are in the value group.

The set of points $P\in S_{ABCD}(K)$ with $\val(P)=x$ forms a \emph{nonempty} open subset, by the non-archimedean property (the existence of such $P$ is due to Kapranov's theorem, \cite[Theorem 2.1.1]{EKL06}\cite[Theorem 3.1.3]{MS15}). Moreover, as $x\in T$, whenever $\gamma\in\Gamma_{ABCD}$ and $P\in S_{ABCD}(K)$ is with $\val(P)=x$, we have
\[\val(\gamma.P)=\trop(\gamma)(\val(P)).\]
The rest follows from Proposition \ref{lem:tropical-normal-behavior}.

\section{Application: Rational Points on the Compact Component}
\label{sec:rational-points-compact-component}

Another application of our theory of tropicalized dynamics is the following result on rational points of the cubic Markov surface.

Let $S_D$ and $\Gamma_D$ be shorthands of $S_{000D}$ and $\Gamma_{000D}$, respectively. If $D$ is a real number, $0<D<4$, then $S_D(\R)$ has a unique compact component~\cite[\S{3.3}]{Goldman03}, denoted by $S_D(\R)^{cpt}$; this is $\Gamma_D$-invariant. If $A\subset\R$ is a subring and $D\in A$, then we may write $S_D(A)^{cpt}=S_D(\R)^{cpt}\cap S_D(A)$, which is again $\Gamma_D$-invariant.
\begin{theorem}
    \label{thm:rational-points-compact-component}
    Suppose $p$ is an odd prime and $D\in\Z[\frac1p]$. If $0<D<1$, then there are finitely many points $P_1,\ldots,P_N\in S_{D}(\Z[\frac1p])^{cpt}$ such that, any point in $S_D(\Z[\frac1p])^{cpt}$ is either
    \begin{enumerate}[(i)]
        \item on the $\Gamma_{D}$-orbit of some $P_i$, or
        \item on the $\Gamma_{D}$-orbit of some point in the set
        \[H=\left\{(X_1,X_2,X_3)\in S_D(\Z[\tfrac1p]) : \begin{array}{l} X_1X_2X_3=0, \\
            (\exists i)(2\val_p(X_i)<\val_p(D))\end{array}\right\}.\]
    \end{enumerate}
    Moreover, if $p\equiv 1\pmod{4}$ and there is $(x_0,y_0)\in\Z[\frac1p]$ with $x_0^2+y_0^2=D$, then $H$ is infinite; otherwise, $H$ is empty.
\end{theorem}

\subsection{The real place and the \texorpdfstring{$p$}{p}-adic place}

Given a nonzero number $x\in\Z[\frac1p]$, denote its absolute value on $\R$ by $|x|_\infty$, as of ``infinite place.'' We want to compare this with its $p$-adic value $|x|_p=p^{-\val_p(x)}$. Since $x=mp^{\val_p(x)}$ for some $m\in\Z$ relatively prime to $p$, we have
    \[|x|_\infty\cdot|x|_p = |m|p^{\val_p(x)}\cdot p^{-\val_p(x)}=|m|\geq 1.\]
    Taking $\log_p$, we thus have
    \begin{equation}
        \label{eqn:p-adic-value-inequality}
        \log_p|x|_\infty \geq \val_p(x).
    \end{equation}

A straight consequence is the following
\begin{lemma}
    \label{lem:251218-1}
    Let $p$ be a prime. If $x\in\Z[\frac1p]$ is nonzero and $-1<x<1$, then $\val_p(x)<0$.
\end{lemma}

\begin{lemma}
    \label{lem:251218-2}
    Suppose $0<D<1$. The compact part $S_D(\R)^{cpt}\subset\R^3$ lies on $(-1,1)^3$.
\end{lemma}
\begin{proof}
    By~\cite[\S{3.3}]{Goldman03}, coordinates of points in $S_D(\R)^{cpt}$ are traces of $2\times 2$ unitary matrices, which lies on the interval $[-2,2]$. So $S_D(\R)^{cpt}\subset[-2,2]^3$ follows.

    It suffices to show that $S_D(\R)\cap[-2,2]^3\subset(-1,1)^3$ if $0<D<1$. Pick any point $(X_1,X_2,X_3)\in S_D(\R)\cap[-2,2]^3$. As $|X_1|\leq 2$, there is an angle $\theta\in[0,\pi]$ such that $X_1=2\cos\theta$. By the equation of $S_D$, we have
    \begin{align*}
        D-X_1^2 &= X_2^2+X_3^2+X_1X_2X_3 \\
        &= X_2^2+X_3^2+2X_2X_3\cos\theta \\
        &= (X_2+X_3\cos\theta)^2+(X_3\sin\theta)^2\geq 0.
    \end{align*}
    It follows that $|X_1|\leq\sqrt{D}<1$. Same for other coordinates.
\end{proof}

Combining two, we see that any point $(X_1,X_2,X_3)\in S_D(\Z[\frac1p])^{cpt}$ is either
\begin{itemize}
    \item a point with at least one of coordinates being zero, or
    \item a point whose coordinates have negative $p$-adic values.
\end{itemize}
Name the former a \emph{halo point} and the latter a \emph{skeletal point}.

\subsection{Halo points}

Up to permuting coordinates, we observe that halo points take the form $(0,X_2,X_3)\in S_D(\Z[\frac1p])^{cpt}$. Hence we reduce to $\Z[\frac1p]$-points of $X_2^2+X_3^2=D$. Let
\[H_1 = \left\{(X_2,X_3)\in\Z[\tfrac1p]^2 : X_2^2+X_3^2=D\right\}.\]
Let $d=\val_p(D)$. From $0<D<1$, we have $d<0$.

\begin{lemma}
    \label{lem:halo-finitude-3mod4}
    If $p\equiv 3\pmod{4}$, then $H_1$ is finite, and all points $(X_2,X_3)$ within has values $\geq\frac12d$.
\end{lemma}
\begin{proof}
    Let $-\nu=\min\{\val_p(X_2),\val_p(X_3)\}$. This means that $p^\nu X_2$ and $p^\nu X_3$ are both integers, with one of them being coprime to $p$. It follows that
    \[\val_p(p^{2\nu}D)=\val_p((p^\nu X_2)^2+(p^\nu X_3)^2)\geq 0.\]
    Hence $p^{2\nu}D$ is an integer as well.

    If $p^{2\nu}D\equiv 0\pmod{p}$, then we have $(p^\nu X_2)^2+(p^\nu X_3)^2\equiv 0\pmod{p}$. Say if $p^\nu X_3$ is coprime to $p$, then we have $(p^\nu X_2)/(p^\nu X_3)$ (modulo $p$) a square root of $-1$ modulo $p$. Contradiction to $p\equiv 3\pmod{4}$.

    So $\nu=-\frac12d$ must hold (which implies the 2nd claim). As there are finitely many integer points $(X,Y)$ on the circle $X^2+Y^2=p^{2\nu}D$, we have our finitude.
\end{proof}

In fact, the trick of setting $-\nu=\min\{\val_p(X_2),\val_p(X_3)\}$ and reducing to an integer equation $X^2+Y^2=p^{2\nu}D$ (with $X=p^\nu X_2$ and $Y=p^\nu X_3$) is applicable in the case $p\equiv 1\pmod{4}$ as well. However, there are various choices of $\nu$ in this case.

\begin{lemma}
    \label{lem:halo-finitude-1mod4}
    Suppose $p\equiv 1\pmod{4}$. Then $H_1$ is either empty or infinite.
\end{lemma}
\begin{proof}
    The trick is to construct infinitely many solutions from $(X_2,X_3)\in H_1$. Let $-\nu=\min\{\val_p(X_2),\val_p(X_3)\}$, so that $\val_p(p^{2\nu}D)\geq 0$. Let $x_0=p^\nu X_2$ and $y_0=p^\nu X_3$. Fix $a,b\in\Z_{>0}$ with $a^2+b^2=p$, $p\nmid a,b$, and $ax_0\not\equiv by_0\pmod{p}$ (exchange $a$ and $b$ for the last one, if necessary).

    For $n\geq 0$, let
    \[\begin{bmatrix} x_n \\ y_n \end{bmatrix}=M^n\begin{bmatrix} x_0 \\ y_0 \end{bmatrix},\quad\text{ where }\quad M=\begin{bmatrix} a & -b \\ b & a \end{bmatrix}.\]
    That is, $x_n+\sqrt{-1}y_n=(x_0+\sqrt{-1}y_0)(a+\sqrt{-1}b)^n$. Then we have $x_n^2+y_n^2=p^nD$, whence $(p^{-n}x_{2n},p^{-n}y_{2n})\in H_1$ for $n=0,1,2,\cdots$.
    
    As $\det M=a^2+b^2\equiv 0\pmod{p}$ yet the trace of $M$ is $2a\not\equiv 0\pmod{p}$, $M$ is diagonalizable over $\Z/p\Z$, with eigenvalues $0$ and $2a$. Hence we have $M^n\equiv(2a)^{n-1}M\pmod{p}$ for $n\geq 1$. Thus $x_n\equiv(2a)^{n-1}x_1\pmod{p}$ and $y_n\equiv(2a)^{n-1}y_1\pmod{p}$ follows. Meanwhile, as $ax_0\not\equiv by_0\pmod{p}$ and $b^2\equiv-a^2\pmod{p}$, we see that
    \begin{align*}
        x_1 &= ax_0-by_0\not\equiv 0\pmod{p}, \\
        y_1 &= bx_0+ay_0 
        \equiv -a^2b^{-1}x_0+ay_0 \pmod{p} \\
        &= -ab^{-1}(ax_0-by_0)\not\equiv 0\pmod{p}.
    \end{align*}
    Hence all $x_n$ and $y_n$ are coprime to $p$, for $n=1,2,\cdots$. Thus the points $(p^{-n}x_{2n},p^{-n}y_{2n})$ are points with distinct minimum $p$-adic values.
\end{proof}

\subsection{Skeletal points}

Fix $0<D<1$, $D\in\Z[\frac1p]$, and let $d=\val_p(D)<0$ and $m=p^{-d}D$.

We now view $\Z[\frac1p]$ as a subring of $(\Q_p,\val_p)$, the field of $p$-adic numbers. The $p$-adic complex number field $(\C_p,\val_p)$ is an algebraically closed non-archimedean field containing $\Q_p$. Hence we study the tropicalization of $S_D$ over $(\C_p,\val_p)$.

Pick any skeletal point $X=(X_1,X_2,X_3)\in S_D(\Z[\frac1p])^{cpt}$, and let $x_i=\val_p(X_i)$, $i=1,2,3$. As $0<|X_i|<1$, we have $x_i<0$, for $i=1,2,3$. (On contrary, if there is $i$ with $x_i\geq 0$, then $X$ is necessarily a halo point.) Although $x=(x_1,x_2,x_3)$ is in the tropicalization $\Trop(S_D)$, it does not guarantee that $x$ is on the skeleton. Nonetheless, we do find $x$ in the skeleton if its 1-norm $-(x_1+x_2+x_3)$ is big enough.

\begin{lemma}
    If $(X_1,X_2,X_3)\in S_D(\C_p)$ is such that the values $x_i:=\val_p(X_i)$ are nonpositive and verify $x_1+x_2+x_3\leq d$, then $(x_1,x_2,x_3)$ is in the skeleton of $\Trop(S_D)$, as well as the image of the cone paramterization $\ConeParam$~\eqref{eqn:cone-parametrization}.
\end{lemma}
\begin{proof}
    It suffices to see that
    \[x_1+x_2+x_3\leq\min\{2x_1,2x_2,2x_3\}\]
    holds, since then $x_1+x_2+x_3\leq\min\{2x_1,2x_2,2x_3,d\}$ will follow.
    
    Assume that $x_1=\min\{x_1,x_2,x_3\}$. Suppose $x_1+x_2+x_3>2x_1$. Then we have
    \begin{align*}
        \val_p(X_1^2+X_1X_2X_3) &= 2x_1, \\
        \val_p(D-X_2^2-X_3^2) &\geq\min\{d,2x_2,2x_3\}.
    \end{align*}
    By the equation of $S_D$, it follows that $2x_1\geq\min\{d,2x_2,2x_3\}$. As $d\geq x_1+x_2+x_3>2x_1$, it follows that $2x_1\geq\min\{2x_2,2x_3\}$ yet $x_1\leq\min\{x_2,x_3\}$; so $x_1=\min\{x_2,x_3\}$. Suppose $x_1=x_2$. Then from $x_1+x_2+x_3>2x_1$, we get $x_3>0$. Contradiction.
\end{proof}

From what we know about the tropical dynamics on the skeleton, we can now present a

\begin{proof}[Proof of Theorem~\ref{thm:rational-points-compact-component}]
    We first claim that there are finitely many points $X\in S_D(\Z[\frac1p])^{cpt}$ with $x=(x_1,x_2,x_3)=\val_p(X)$ such that $x_1+x_2+x_3\geq d=\val_p(D)$. Such points will verify $x_i\geq d$ for $i=1,2,3$, and hence each $X_i$ take the form $X_i=p^dm_i$, by some $m_i\in\Z$. Since $S_D(\Z[\frac1p])^{cpt}\subset[-2,2]^3$, we see that such $m_i$'s range in $[-2p^{-d},2p^{-d}]$. So there are at most $(2p^{-d}+1)^3$ points $P_1,\ldots,P_N\in S_D(\Z[\frac1p])^{cpt}$ with the property of interest.

    Pick any $X\in S_D(\Z[\frac1p])^{cpt}$ with $x=\val_p(X)$ such that $x_1+x_2+x_3<d$. From Proposition~\ref{lem:quadratic-reduction-result}, there is $\gamma_0\in\Gamma_D$ such that $\trop(\gamma_0).x$ is either on a boundary ray or the adjacency locus $Sk(D)$. We consider what this implies for $\gamma_0.X$.

    Extended from \eqref{eqn:251218-1}, we have, for any $\gamma\in\Gamma_D$,
    \begin{equation}
        \label{eqn:251218-2}
        \trop(\gamma).x \leq \val_p(\gamma.X),
    \end{equation}
    where $\leq$ above means each coordinate of the left hand side is no more than the corresponding coordinate of the right hand side. 
    
    If $\trop(\gamma_0).x$ is on a boundary ray, then $\gamma_0.X$ has at least one of its coordinates having nonnegatively valued, thus it must be a halo point. If $\trop(\gamma_0).x$ is on the adjacency locus $Sk(D)$, then unless $\gamma_0.X$ is a halo point, we have $\val_p(\gamma_0.X)\leq(0,0,0)$, thus the 1-norm of $\val_p(\gamma_0.X)$ is at most that of $\trop(\gamma_0).x$, which equals to $-d$ (note that $Sk(D)\subset\{-(x_1+x_2+x_3)=-d\}$). Hence $\gamma_0.X$ falls into one of $P_i$'s listed above.

    To address the properties of the halo set $H$, we have posed $\val_p(X_i)<\frac12\val_p(D)$ so that, by Lemma~\ref{lem:halo-finitude-3mod4}, $H$ becomes empty if $p\equiv 3\pmod{4}$. Together with the dichotomy addressed in Lemma~\ref{lem:halo-finitude-1mod4}, we have our claim on $H$.
\end{proof}

\begin{remark}
    We note that Theorem \ref{thm:rational-points-compact-component} has some tangencies with a lemma \cite[Lemma 5.5]{GMS21} on $\Z[\frac1p]$-points of $S_{000D}$ by Ghosh, Meiri, and Sarnak. The lemma focuses on the subset $\{(x_1,x_2,x_3)\in\Trop(S_{000D}) : x_2=x_3,\ x_1\geq 0\}\cup\{x\in\Trop(S_{000D}) : x_i\geq 0\}$ (this appears as some `fins' budded on the skeleton: see Figure \ref{fig:second-tropicalization}), which almost complements the skeleton.
\end{remark}

\subsection{The case of \texorpdfstring{$1\leq D<4$}{1<=D<4}}

As the compact part $S_D(\Z[\frac1p])^{cpt}$ is well-defined for $0<D<4$, one may wonder what happens for the case untouched, $1\leq D<4$. In that case, the estimate in Lemma~\ref{lem:251218-2} becomes $S_D(\R)^{cpt}\subset(-\sqrt{D},\sqrt{D})^3$ and hence there may be some additional points with an integer coordinate (say, $(X_1,X_2,X_3)\in S_D(\Z[\frac1p])^{cpt}$ with $X_1\in\Z$). Nonetheless, the possible integer coordinates are $0$ and $\pm 1$, so we can still discuss the finitude of halo points (those points with at least one of coordinates being an integer).

After some easy yet tedious task, one can see that the essence of Theorem~\ref{thm:rational-points-compact-component} remain the same in the case $1\leq D<4$: all orbits come from a halo point or a finite number of skeletal points. The finitude of halo points will depend on what $p$ is modulo $12$, which will influence the existence of integer solutions $(a,b)$ to the equations $a^2+b^2=p$ or $a^2+b^2+ab=p$.

Our restriction $0<D<1$ here is therefore to make less case-by-case splittings and have a shorter presentation.

\bibliography{references}{}
\bibliographystyle{amsalpha}

\end{document}